\def\@fnsymbol#1{\ensuremath{\ifcase#1\or \dagger \or \ddagger \or \ddagger \or
   \mathsection\or \mathparagraph\or \|\or **\or \dagger\dagger
   \or \ddagger\ddagger \else\@ctrerr\fi}}
\title{The Pendulum Arrangement: \\ Maximizing the Escape Time of Heterogeneous Random Walks}
\date{}
\author{%
Asaf Cassel%
\thanks{School of Computer Science, Tel Aviv University, Israel. 
Correspondence: \texttt{acassel@mail.tau.ac.il}.
}
\and
Shie Mannor%
\thanks{Faculty of Electrical Engineering, Technion Institute of
Technology, Israel. Correspondence: \texttt{shiemannor@gmail.com}, \texttt{guytenn@gmail.com}.
}
\\~\\
\textit{Names ordered alphabetically}
\and
Guy Tennenholtz$^\ddagger$%
}
\newcommand{\ceil}[2][*]{\delim\lceil\rceil#1{#2}}
\newcommand{\floor}[2][*]{\delim\lfloor\rfloor#1{#2}}
\newcommand{\indEvent}[2][*]{\mathds{1}{\brk[c]#1{#2}}}
\newcommand{\RR}[1][]{\mathbb{R}^{#1}}
\DeclareMathOperator*{\argmax}{arg\,max}
\DeclarePairedDelimiterX\setDef[1]\lbrace\rbrace{\def\given{\;\delimsize\vert\;}#1}
\declaretheoremstyle[
	    spaceabove=\topsep, 
	    spacebelow=\topsep, 
	    bodyfont=\normalfont\itshape,
    ]{theorem}
\declaretheorem[style=theorem,name=Theorem]{theorem}
\declaretheoremstyle[
	    spaceabove=\topsep, 
	    spacebelow=\topsep, 
	    bodyfont=\normalfont,
    ]{definition}
\declaretheoremstyle[
        spaceabove=\topsep, 
        spacebelow=\topsep, 
        bodyfont=\normalfont,
        notefont=\normalfont\bfseries,
        notebraces={}{},
        qed=$\blacksquare$, 
    ]{proofstyle}
\declaretheorem[style=proofstyle,numbered=no,name=Proof]{proof}
\declaretheorem[style=theorem,sibling=theorem,name=Lemma]{lemma}
\declaretheorem[style=theorem,sibling=theorem,name=Proposition]{proposition}
\declaretheorem[style=theorem,numbered=no,name=Theorem]{theorem*}
\declaretheorem[style=theorem,numbered=no,name=Lemma]{lemma*}
\declaretheorem[style=theorem,numbered=no,name=Corollary]{corollary*}
\declaretheorem[style=theorem,numbered=no,name=Proposition]{proposition*}
\declaretheorem[style=theorem,numbered=no,name=Claim]{claim*}
\declaretheorem[style=theorem,numbered=no,name=Fact]{fact*}
\declaretheorem[style=theorem,numbered=no,name=Observation]{observation*}
\declaretheorem[style=theorem,numbered=no,name=Conjecture]{conjecture*}
\declaretheorem[style=definition,sibling=theorem,name=Definition]{definition}
\declaretheorem[style=definition,sibling=theorem,name=Remark]{remark}
\declaretheorem[style=definition,numbered=no,name=Definition]{definition*}
\declaretheorem[style=definition,numbered=no,name=Remark]{remark*}
\declaretheorem[style=definition,numbered=no,name=Example]{example*}
\declaretheorem[style=definition,numbered=no,name=Question]{question*}
\declaretheorem[style=definition,numbered=no,name=Assumption]{assumption*}
\newcommand{\escapeTimeOf}[2]{\tau\brk[c]*{#1; #2}}
\newcommand{\expectedEscape}[2]{\mathbb{E}\tau\brk[c]*{#1; #2}}
\newcommand{\varP}{p}
\newcommand{\varPof}[1]{\varP_{#1}}
\newcommand{\varA}{x}
\newcommand{\varAof}[1]{\varA_{#1}}
\newcommand{\varAlen}{d}
\newcommand{\pendOf}[1]{{#1}_{pend}}
\newcommand{\sortOf}[1]{{#1}_{sort}}
\newcommand{\permSig}{\sigma}
\newcommand{\permSigof}[1]{\permSig\brk*{#1}}
\newcommand{\permInv}{\sigma_{\text{mirror}}}
\newcommand{\permInvOf}[1]{\permInv\brk*{#1}}
\newcommand{\permImp}[1][l]{\permSig_{#1}}
\newcommand{\permImpOf}[2][l]{\permImp[#1]\brk*{#2}}
\newcommand{\permSortToPend}{\theta}
\newcommand{\permSortToPendOf}[1]{\theta\brk*{#1}}
\newcommand{\permSortToPendInv}{\theta^{-1}}
\newcommand{\permSortToPendInvOf}[1]{\theta^{-1}\brk*{#1}}
\newcommand{\permTilde}[1][l]{\tilde{\permSig}_{#1}}
\newcommand{\permTildeOf}[2][l]{\permTilde[#1]\brk*{#2}}
\newcommand{\allPermSet}{\Sigma}
\newcommand{\val}{J}
\newcommand{\valof}[1][\varA]{\val\brk*{#1; m}}
\newcommand{\win}[2][k]{W^{(#1)}_{#2}}
\newcommand{\winOf}[3][k]{\win[#1]{#2}\brk*{#3}}
\begin{document}

\maketitle

\begin{abstract}
    We identify a fundamental phenomenon of heterogeneous one dimensional random walks: the escape (traversal) time is maximized when the heterogeneity in transition probabilities forms a pyramid-like potential barrier. This barrier corresponds to a distinct arrangement of transition probabilities, sometimes referred to as the {\em pendulum arrangement}. We reduce this problem to a sum over products, combinatorial optimization problem, proving that this unique structure always maximizes the escape time. This general property may influence studies in epidemiology, biology, and computer science to better understand escape time behavior and construct intruder-resilient networks.
\end{abstract}

\section{Introduction}
\label{sec:intro}

Estimating the escape behavior of random walks has been an important performance indicator in fields such as biology \cite{smoluchowski1916drei,pulkkinen2013distance}, epidemiology \cite{lloyd2001viruses,hufnagel2004forecast}, cosmology \cite{krioukov2012network}, computer science \cite{lorek2017generalized}, and more \cite{rice1985diffusion,tuckwell1988introduction,carreras2002critical,benichou2005optimal}. Maximizing the escape time plays a crucial role in containing the spread of diseases or computer viruses \cite{lloyd2001viruses,pastor2001epidemic}, where the probability of an epidemic outbreak is closely related to properties of the contact network \cite{tennenholtz2020sequential}. In this work we identify a phenomenon related to the \textit{exact} escape time of a heterogeneous random walk on the finite line. Specifically, we show that the escape time is always maximized by a unique structure of transition probabilities, also known as the ``Pendulum Arrangement".

The characteristics of escape times of random walks have been extensively studied under the names of first passage time, escape times, and hitting times. While analytical formulations of the escape time have been established \cite{noh2004random,barrera2009abrupt,ding2018first}, their analysis has been mostly based on mean-field theory, asymptotic characteristics, and approximations \cite{noh2004random,condamin2007first,barrera2009abrupt,fronczak2009biased,tejedor2012random,hwang2012first,lee2014estimating,godec2016first}. Also related to our work, are studies on the speed of random walks in random environments \cite{solomon1975random,takacs2001more,mayer2004limit,procaccia2012need}. Specifically, \cite{procaccia2012need} show that the speed is minimized asymptotically by equally spaced drifts on the line. In contrast, our work takes an exact, combinatorial view of the problem, revealing an intrinsic feature of the maximum escape time in the general setting of an arbitrary heterogeneous random walk.

We consider a heterogeneous random walk on a finite line \cite{barrera2009abrupt}. Given a vector $\varP = (\varPof{1}, \hdots, \varPof{\varAlen})$ of~$\varAlen$ transition probabilities, the process, as depicted in \cref{fig: random walk}, starts at position $0$, moves backward with probability $\varPof{i}$ (reflecting at $0$), forward with probability $1-\varPof{i}$, and ends once it reaches position $\varAlen+1$. 
Our goal is to rearrange the elements of the vector $\varP$ (corresponding to rearranging the transition probabilities of moving backward on the line), so as to maximize the expected escape time of the random walk, namely, the time to reach position $\varAlen+1$ for the first time. 
Conceptually, we wish to form a potential barrier under a fixed budget, but are unsure where to place the barrier on the line.

\begin{figure} %
\includegraphics[width=\textwidth]{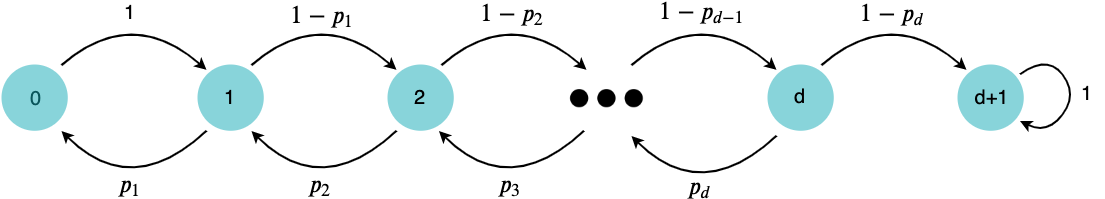}
\centering
\caption{\label{fig: random walk} The heterogeneous random walk process. 
}
\end{figure}

It is not clear a-priori whether the structure of this barrier has a closed form solution as it may depend on delicate relationships between the given probabilities. 
Intuitively, one might choose to arrange the transition probabilities in decreasing or increasing order. Here, an increasing order of the probabilities corresponds to forming a potential barrier toward the end of the line, reinforcing nodes in the vicinity of the termination node, whereas, a decreasing order corresponds to forming a barrier at the beginning of the line. Perhaps surprisingly, neither arrangement would  maximize the escape time. 

To obtain some intuition, consider an ascending order, where the highest probability is placed last. Notice that position $\varAlen$ is reached only after visiting position $\varAlen-1$, i.e., the second to last position is always visited more than the last position. It is thus unreasonable to place the highest probability last, as this would only decrease the expected escape time because it will be used less often. A similar argument can be made for a descending order, by switching between the first two probabilities. We will make this intuition precise in our complete derivation. 

Our main result shows that there is a {\em unique optimal} order of the transition probabilities that does not depend on their absolute, but rather their relative value, i.e., their sorted order. This also implies that changing the probabilities in a way that does not change their sorted order does not change the optimal arrangement. More specifically, we prove that the optimal order of the probabilities is such that they form a special pyramid-like shape, sometimes referred to as the pendulum arrangement (see \cref{fig: pendulum arrangement}), where the highest probability is placed in the middle. 

Finally, we formulate a continuous optimization variant of the problem, where the transition probabilities are optimized under limited budget constraints. We show that our main result can greatly diminish the complexity of finding an optimal solution. We also provide numerical experiments that illustrate the potential gains of using the pendulum arrangement, and discuss possible alternative statistics, including the minimum escape time.

\section{Problem Statement}
A vector $\varP = \brk*{\varPof{1}, \hdots, \varPof{\varAlen}} \in \brk*{0,1}^\varAlen$ of transition probabilities defines a heterogeneous random walk on a finite line of $\varAlen + 2$ states, as depicted in \cref{fig: random walk}. Formally, this process is defined by the following Markov chain. Let $\mathcal{M}^{\varP} = \brk[c]*{X_t^{\varP}}_{t=1}^\infty$ where $ X_t^{\varP} \in \setDef{0, 1, \hdots, \varAlen, \varAlen+1}$ is a random process that satisfies
\begin{equation*}
    P(X_{t+1}^{\varP} = j | X_t^{\varP} = i)
    =
    \begin{cases}
        \varPof{i} &,1 \leq i \leq \varAlen, j=i-1 \\
        1-\varPof{i} &,1 \leq i \leq \varAlen, j=i+1 \\
        1 &,(j = 1 \land i = 0) \lor (j = i = \varAlen+1) \\
        0 &,\text{otherwise}.
    \end{cases}
\end{equation*}
We define the escape time $\escapeTimeOf{\varP}{k}$ as the arrival time of $X_t^{\varP}$ to the termination state $\varAlen + 1$ given that it started at state $k$, i.e.,
\begin{equation*}
    \escapeTimeOf{\varP}{k}
    \coloneqq
    \min \brk[c]*{ t : X_t^{\varP} = \varAlen + 1; X_0^{\varP} = k}.
\end{equation*}
Our goal is to find the arrangement of the elements of $\varP$ that maximizes the expected escape time starting at state~$X_0 = 0$. Formally, let $\allPermSet$ be the set of permutations on $\brk[c]*{1, \ldots, \varAlen}$, i.e., $\permSig \in \allPermSet$ is a bijective mapping of $\brk[c]*{1, \ldots, \varAlen}$ onto itself. A vector $q = \permSig \varP$ is a permutation of the elements of $\varP$ defined as $q_i = \varPof{\permSigof{i}}$. Our goal is to find a permutation $\permSig^* \in \allPermSet$ such that
\begin{equation}
    \label{eq: max escape time}
    \tag{P1}
    \permSig^* \in \arg\max_{\permSig \in \allPermSet} \expectedEscape{\permSig \varP}{0}.
\end{equation}
In what follows we will show that $\permSig^*$ admits a unique solution that maps large transition values to the center, and small values to the edges of the line.

\begin{figure} %
\includegraphics[width=\textwidth]{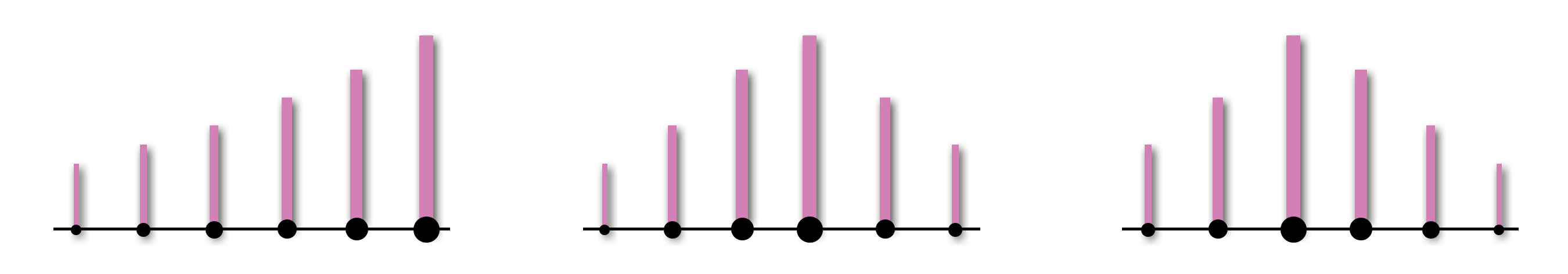}
\centering
\caption{\label{fig: pendulum arrangement} (left) Sorted arrangement. (middle) Pendulum arrangement. (right) Pendulum mirror image.}
\end{figure}

\section{Main Result}

This section states our main result, showing the optimal solution to Problem~\eqref{eq: max escape time} satisfies a unique symmetric arrangement, known as the pendulum arrangement, or its mirror. To that end, we define the mirror permutation $\permInv$, which reverses the vector it operates on.
\begin{definition}[Mirror Permutation] \label{def: mirror perm}
The mirror permutation is defined by $\permInvOf{i} = \varAlen + 1 - i$.
\end{definition}
\noindent Next, we define the pendulum arrangement.
\begin{definition}[Pendulum Arrangement]
\label{def:pendulum arrangement}
We say $\varA \in \RR[\varAlen]$ satisfies the pendulum arrangement if
\begin{align*}
    \varAof{i} \leq \varAof{d + 1 - i}
	,\;
	\forall 1 \leq i \leq \floor{\frac{\varAlen}{2}}
	\qquad\quad,\;
	\varAof{d + 1 - i} \leq \varAof{i + 1}
	,\;
	\forall 1 \leq i \leq \floor{\frac{\varAlen - 1}{2}}
	.
\end{align*}
We say $\pendOf{\varA}$ is a pendulum arrangement of $\varA$ if $\exists \permSig \in \allPermSet$ such that $\pendOf{\varA} = \permSig \varA$ and $\pendOf{\varA}$ is a pendulum arrangement.
\end{definition}
The pendulum arrangement has a special pyramid-like shape, as depicted in \cref{fig: pendulum arrangement}. Notice that traversing over its elements in descending order creates a pendulum-like motion hence explaining the name. Intuitively, the pendulum arrangement of a vector $\varA \in \RR[d]$ can be constructed by first sorting $\varA$ in decreasing order, and then placing the elements of the sorted array such that the largest element is in the middle, the next element to its left, the following element to its right, repeating this process until all elements have been placed in a pendulum-like ordering. This observation is made formal by the following lemma, which relates the pendulum arrangement to the sorted arrangement.
\begin{lemma}
\label{lemma:sort to pend}
    For $\varA \in \RR[\varAlen]$ let $\sortOf{\varA}$ be the result of sorting the elements of $\varA$ in ascending order.
    Define
    \begin{equation*}
        \permSortToPendOf{j} 
        =
        \begin{cases}
            2j-1 &, j \leq \frac{d+1}{2} \\
            2(d+1-j) &, \text{otherwise},
        \end{cases}
    \end{equation*}
    then $\pendOf{\varA}$ is uniquely defined and satisfies $\permSortToPend \sortOf{\varA} = \pendOf{\varA}$.
\end{lemma}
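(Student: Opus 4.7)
My plan is to establish the result in two directions: existence (that $\permSortToPend \sortOf{\varA}$ is a pendulum arrangement of $\varA$) and uniqueness (that any pendulum arrangement of $\varA$ equals this vector). The core tool in both directions is the observation that the pendulum inequalities in \cref{def:pendulum arrangement} can be concatenated into a single totally ordered chain over the coordinates.

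For existence, I would first verify that $\permSortToPend \in \allPermSet$: the two cases of its definition map $\brk[c]*{j : j \leq (\varAlen+1)/2}$ bijectively onto the odd integers in $\brk[c]*{1,\ldots,\varAlen}$ via $j \mapsto 2j-1$, and the remaining indices bijectively onto the even integers via $j \mapsto 2(\varAlen+1-j)$; a short parity check shows these ranges partition $\brk[c]*{1,\ldots,\varAlen}$. Writing $y = \permSortToPend \sortOf{\varA}$ so that $y_i = \sortOf{\varA}_{\permSortToPendOf{i}}$, one verifies for $1 \leq i \leq \floor{\varAlen/2}$ that $\permSortToPendOf{i} = 2i-1$ and $\permSortToPendOf{\varAlen+1-i} = 2i$, reducing the first pendulum inequality to $\sortOf{\varA}_{2i-1} \leq \sortOf{\varA}_{2i}$. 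Similarly, for $1 \leq i \leq \floor{(\varAlen-1)/2}$ we have $\permSortToPendOf{i+1} = 2i+1$, reducing the second inequality to $\sortOf{\varA}_{2i} \leq \sortOf{\varA}_{2i+1}$. Both hold immediately by the ascending sortedness of $\sortOf{\varA}$.

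For uniqueness, the two families of inequalities in \cref{def:pendulum arrangement} concatenate into a single chain
\begin{equation*}
    y_1 \leq y_{\varAlen} \leq y_2 \leq y_{\varAlen-1} \leq y_3 \leq y_{\varAlen-2} \leq \cdots
\end{equation*}
that totally orders all $\varAlen$ coordinates of any pendulum arrangement $y$ of $\varA$. Since any such $y$ is by definition a permutation of $\varA$, and the multiset of values of $\varA$ is fixed, this total order forces the $k$-th coordinate along the chain to equal $\sortOf{\varA}_k$. A brief check confirms that the chain visits the coordinates in exactly the order $\permSortToPendInvOf{1}, \permSortToPendInvOf{2}, \ldots, \permSortToPendInvOf{\varAlen}$ (for both parities of $\varAlen$), so any pendulum arrangement of $\varA$ equals $\permSortToPend \sortOf{\varA}$ as a vector, even in the presence of repeated entries in $\varA$.

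I expect the main obstacle to be the bookkeeping around the floor functions: verifying bijectivity of $\permSortToPend$, identifying which case of its definition applies at each index encountered in the existence check, and confirming that the concatenated chain enumerates all of $\brk[c]*{1,\ldots,\varAlen}$ for both even and odd $\varAlen$. Once these index computations are carefully handled, the proof reduces to the monotonicity of $\sortOf{\varA}$.
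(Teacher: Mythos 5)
Your proposal is correct and follows essentially the same route as the paper: your uniqueness step—concatenating the two families of pendulum inequalities into the chain $y_1 \le y_\varAlen \le y_2 \le \cdots$ and observing it reads the coordinates in the order $\permSortToPendInvOf{1},\ldots,\permSortToPendInvOf{\varAlen}$, so that $\permSortToPendInv y$ must be the ascending sort—is precisely the paper's parity-case argument that $\permSortToPendInv \pendOf{\varA} = \sortOf{\varA}$, just phrased as one chain. Your explicit verification that $\permSortToPend$ is a bijection and that $\permSortToPend\sortOf{\varA}$ indeed satisfies the pendulum inequalities is a minor (and sound) addition that the paper leaves implicit.
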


\noindent The proof of the lemma is technical and deferred to \cref{sec:proofOfSortToPend}. We are now ready to state our main result.

\begin{theorem}[Main Result]
\label{thm:main}
$\permSig^* \varP$ maximizes the expected escape time, i.e., solves Problem~\eqref{eq: max escape time}, if and only if it is ordered according to the pendulum arrangement, ${\permSig^*\varP = \pendOf{\varP}}$, or its mirror ${\permSig^*\varP = \permInv\pendOf{\varP}}$.
\end{theorem}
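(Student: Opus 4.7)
My plan combines a combinatorial formula for $\mathbb{E}\tau$ with exchange arguments, completed inductively. First, unrolling the first-passage recurrence $g_1 = 1$, $g_{k+1} = (1 + p_k g_k)/(1 - p_k)$ (where $g_k$ is the expected first-passage time from state $k-1$ to $k$, so $\expectedEscape{\varP}{0} = \sum_{k=1}^{d+1} g_k$) gives the identity
\[
\expectedEscape{\varP}{0}\prod_{i=1}^{d}(1-p_i) \;=\; \sum_{0 \le j \le k \le d} \prod_{i=1}^{j-1}(1-p_i)\prod_{i=j+1}^{k} p_i \prod_{i=k+1}^{d}(1-p_i),
\]
in which each summand is indexed by a consecutive block of $p$-positions (possibly empty) together with an optional skipped position immediately to its left, and $q$'s at all other positions. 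This is the ``sum over products'' reduction advertised in the abstract.

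Second, a direct term-by-term analysis of this sum computes the effect of an adjacent transposition of the values at positions $i, i+1$. Only summands in which these two positions play different roles contribute, and their contributions assemble into the clean swap identity
\[
\expectedEscape{\varP}{0} - \expectedEscape{\varP'}{0} \;=\; \frac{(p_i - p_{i+1})\,(1 + g_i - 2 S_i)}{(1-p_i)(1-p_{i+1})},
\]
where $g_i$ depends only on $p_1, \ldots, p_{i-1}$ and $S_i := \sum_{k=i+1}^{d}\prod_{j=i+2}^{k} p_j/(1-p_j)$ depends only on $p_{i+2}, \ldots, p_d$. The swap is thus strictly beneficial iff $(p_i - p_{i+1})$ and $1 + g_i - 2 S_i$ carry opposite signs, giving a local optimality criterion.

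Third, I would complete the identification of the pendulum by induction on $d$, leveraging \cref{lemma:sort to pend}. The base cases $d \le 2$ follow directly from the swap identity. For the inductive step, the central claim is that in any optimum $\min \varP$ must occupy position $1$ or position $d$; by the mirror symmetry of the expected escape time, assume $\min \varP$ sits at position $1$, restrict to positions $2, \ldots, d$ with the remaining values, and apply the inductive hypothesis. \cref{lemma:sort to pend} then packages the conclusion as $\permSig^* \varP \in \{\pendOf{\varP},\, \permInv \pendOf{\varP}\}$.

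The main obstacle is this endpoint claim. Adjacent-swap stability alone does \emph{not} pin down the pendulum: one can construct unimodal arrangements that are local maxima under every adjacent transposition yet remain strictly dominated by the pendulum (for instance with the peak at a non-central position, or with the second-largest value on the ``wrong'' side of the peak, defects that no single adjacent swap can correct). Establishing the endpoint property therefore requires analysis of non-adjacent swaps or their compositions. I expect the actual argument to express the change in $\mathbb{E}\tau$ under a swap between positions $i$ and $j$ as a telescoping sum of adjacent-swap contributions, each evaluated via the identity above at the intermediate arrangement, and to show via careful bookkeeping that displacing $\min \varP$ from any interior position toward the nearest endpoint yields a net strictly positive change unless the arrangement already equals $\pendOf{\varP}$ or $\permInv \pendOf{\varP}$. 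This global-telescoping argument is the combinatorial heart of the theorem.
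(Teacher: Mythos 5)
Your first step is sound and matches the paper's reduction: unrolling the first-passage recurrence and clearing denominators gives exactly the pre-simplification form of \cref{proposition:escapeTimeFormula}, so the problem does reduce to maximizing a sum of products of the quantities $\varPof{i}/(1-\varPof{i})$ over consecutive windows. The genuine gap is in the combinatorial optimization, and it is one you flag yourself but do not close. The adjacent-swap identity only gives stationarity under transpositions of neighbours, and---as you correctly observe, and as \cref{remark: no simple permutation} makes precise with the example $\varP=(0.17,0.64,0.85,0.71)$---no argument built from two-element exchanges can reach the pendulum monotonically. Everything therefore hinges on your ``endpoint claim'' (that $\min\varP$ sits at position $1$ or $d$ in any optimum) together with the deferred ``global-telescoping'' bookkeeping; but that deferred step is precisely the heart of the theorem, and telescoping a non-adjacent swap into adjacent ones produces a sum of terms of uncontrolled sign evaluated at intermediate arrangements, with no mechanism offered for showing the total is positive. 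Your induction has a second, unaddressed defect: after fixing $\min\varP$ at position $1$, the objective restricted to positions $2,\ldots,d$ is not the length-$(d-1)$ escape-time functional but that functional plus an extra sum of prefix products weighted by the value at position $1$, so the inductive hypothesis does not apply as stated, and the orientation of the sub-pendulum is no longer forced by symmetry. Finally, the ``only if'' direction requires strict improvement for every non-pendulum arrangement, which your sketch never supplies.

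For contrast, the paper resolves exactly this obstruction by replacing two-element exchanges with the improving permutation $\permImp$ of \cref{def:improving perm}, which simultaneously exchanges every pair of positions symmetric about the axis $(l+1)/2$ so that the larger element of each pair lands to the right of the axis. The reason this works is a pairing of windows: for each window size $m$, the windows $W^{(m)}_{i}$ and $W^{(m)}_{(l+2-m)-i}$ have a product invariant under $\permImp$, while the larger of the two values moves into the right-hand window; the elementary ``fixed area, longer side, larger perimeter'' inequality then shows each paired sum can only increase, strictly unless $\permImp$ acts as the identity or the mirror (\cref{lemma: impPerm}). A separate convergence result (\cref{lemma:pendulumSort}) shows that iterating $\permInv\permImp[\varAlen-1]\permInv\permImp[\varAlen]$ reaches $\pendOf{\varA}$ in at most $d/2$ rounds, which delivers both optimality and uniqueness up to the mirror. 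The multi-element exchange $\permImp$, and the window pairing that makes it monotone, are exactly the objects your ``careful bookkeeping'' step would need to discover.
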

In other words, solving Problem~\eqref{eq: max escape time} reduces to finding a pendulum arrangement of the elements of~$\varP$, which is immediately obtained from their sorted order. Moreover, this solution is unique up to its mirror.

\newpage
\section{Proof of Main Result}

\begin{figure} %
\centering
\includegraphics[width=\textwidth]{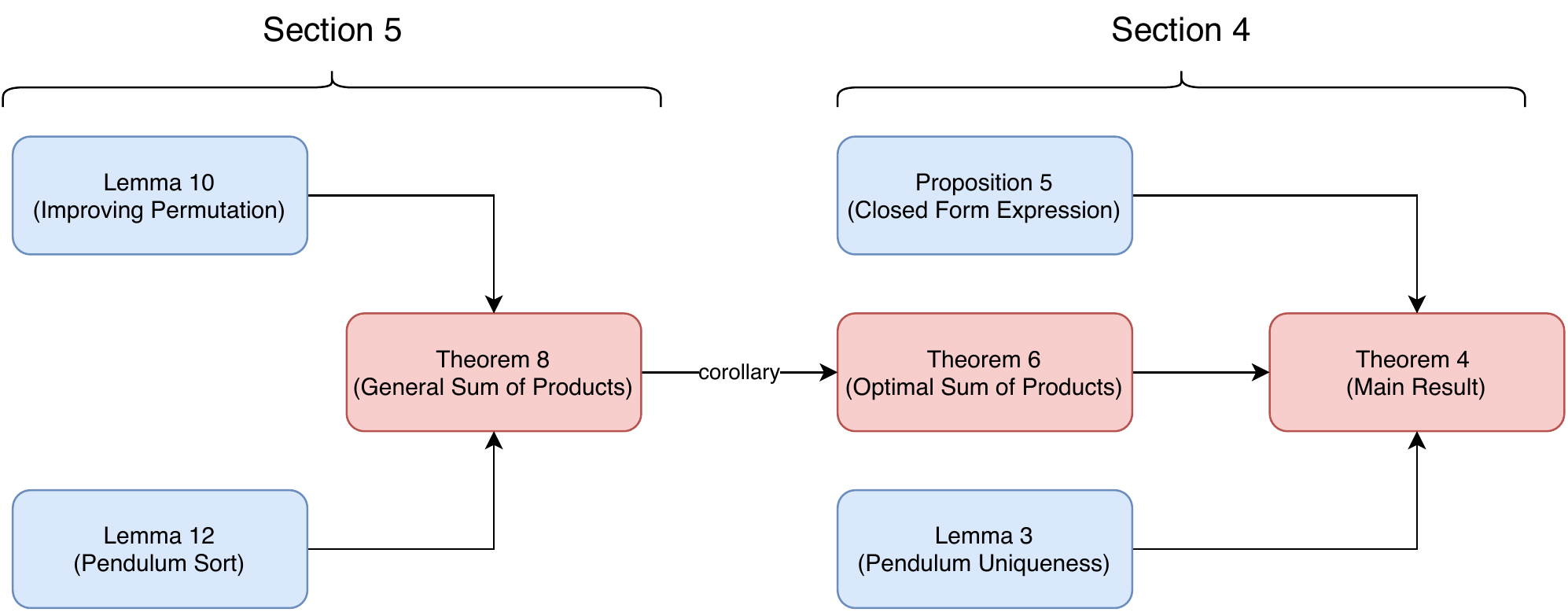}
\caption{
\label{fig: theorems flow chart}
A flowchart for proving \cref{thm:main} (Main Result). In red are the theorems used to prove the final result, and in blue the main supporting lemmas.
}
\end{figure}

The proof of \cref{thm:main} consists of two parts, as seen in \cref{fig: theorems flow chart}. In this section we focus on the right part of \cref{fig: theorems flow chart}, showing a closed form expression for the expected escape time $\expectedEscape{\varP}{0}$, which reduces the problem to maximizing a sum over products. We then prove that the pendulum arrangement maximizes this sum of products, thus concluding the proof. This second part, which is used here as a tool, is the heart of the problem and we discuss and explain its main ideas in \cref{sec:sum over products}.

The following proposition derives a closed form expression for $\expectedEscape{\varP}{0}$. Its proof uses a direct inductive claim and is provided in \cref{sec:proofOfEscapeFormula}.
\begin{proposition}[Closed Form Expression] \label{proposition:escapeTimeFormula}
We have that
\begin{align}
\label{eq: escape time formula}
    \expectedEscape{\varP}{0}
    =
    (\varAlen + 1)
    +
    2
    \sum_{m=1}^{\varAlen}
    \sum_{i=1}^{\varAlen - m + 1} 
    \prod_{j=i}^{i + m - 1} \frac{\varPof{j}}{1-\varPof{j}}.
\end{align}
\end{proposition}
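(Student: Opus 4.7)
The plan is to decompose the total escape time into a sum of ``one-step'' hitting times. Let $h_k = \mathbb{E}[\text{time to move from state } k \text{ to state } k+1]$, so that $\mathbb{E}\tau\{\varP;0\} = \sum_{k=0}^{d} h_k$ by linearity and the strong Markov property. From the reflection at $0$ we get $h_0 = 1$ directly, and for $k \geq 1$ a one-step conditioning yields
\begin{equation*}
h_k \;=\; (1-\varPof{k}) \cdot 1 + \varPof{k}\,(1 + h_{k-1} + h_k),
\end{equation*}
which rearranges to the recursion $h_k = \frac{1}{1-\varPof{k}} + \frac{\varPof{k}}{1-\varPof{k}}\,h_{k-1}$. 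Writing $r_j := \varPof{j}/(1-\varPof{j})$ and noting the identity $\frac{1}{1-\varPof{k}} = 1 + r_k$, this becomes the clean form $h_k = 1 + r_k(1 + h_{k-1})$.

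The key inductive claim I would establish is
\begin{equation*}
h_k \;=\; 1 + 2\sum_{m=1}^{k}\prod_{j=k-m+1}^{k} r_j \qquad \text{for every } k \geq 0,
\end{equation*}
i.e.\ each $h_k$ is $1$ plus twice the sum of products over all contiguous segments of $r$-values that \emph{end} at position $k$. The base case $k=0$ is immediate (empty sum). For the inductive step, substituting the formula for $h_{k-1}$ into $h_k = 1 + r_k + r_k h_{k-1}$ gives a $1$, an $r_k$ from the first $1$, another $r_k$ from multiplying $r_k$ against the leading $1$ in $h_{k-1}$ (together these produce the $2 r_k$ term for the length-$1$ segment $\{k\}$), and for each segment $[k-m, k-1]$ appearing in $h_{k-1}$ the factor $r_k$ extends it to the segment $[k-m, k]$, which is exactly the length-$(m+1)$ segment ending at $k$. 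A straightforward reindexing then closes the induction.

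The final step is to sum and swap the order of summation: $\mathbb{E}\tau\{\varP;0\} = (d+1) + 2\sum_{k=1}^{d}\sum_{m=1}^{k}\prod_{j=k-m+1}^{k} r_j$, and substituting $i := k - m + 1$ (the left endpoint of the segment) converts the double sum over (right endpoint, length) pairs with $k \leq d$ into a double sum over (left endpoint, length) pairs with $i + m - 1 \leq d$, producing exactly $\sum_{m=1}^{d}\sum_{i=1}^{d-m+1}\prod_{j=i}^{i+m-1} r_j$, which is the claimed expression.

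The only real obstacle is identifying the correct closed form for $h_k$ (in particular, spotting the factor of $2$ and the ``contiguous segments ending at $k$'' structure); once that combinatorial interpretation is in hand, the induction is mechanical and the reindexing is bookkeeping. An alternative would be to set up the full linear recursion for $T_k := \mathbb{E}\tau\{\varP;k\}$ with boundary conditions $T_0 = 1 + T_1$ and $T_{d+1}=0$ and solve it via variation of parameters, but the $h_k$ decomposition avoids carrying both boundary conditions and immediately yields the segment-product form needed in \cref{sec:sum over products}.
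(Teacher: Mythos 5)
Your proposal is correct, and while it rests on the same basic decomposition as the paper's proof in \cref{sec:proofOfEscapeFormula} --- your one-step passage times $h_k$ are exactly the paper's successive differences $D_k = \expectedEscape{\varP}{k} - \expectedEscape{\varP}{k+1}$, and your recursion $h_k = \frac{1}{1-\varPof{k}} + \frac{\varPof{k}}{1-\varPof{k}} h_{k-1}$ is identical to the paper's recursion for $D_k$ --- the way you finish is genuinely cleaner. The paper solves the recursion by iteration into the raw form $D_k = \frac{1}{1-\varPof{k}} + D_0\prod_{i=1}^{k} r_i + \sum_{m=1}^{k-1}\frac{1}{1-\varPof{m}}\prod_{i=m+1}^{k} r_i$ and then needs a separate, fairly heavy technical lemma (\cref{lemma: technical expression derivation}, with the auxiliary products $G, M, W$, the splitting $1 = (1-\varPof{m}) + \varPof{m}$, and an induction on $\varAlen$) to convert this into the symmetric sum-over-windows form with the factor of $2$. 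You instead absorb the identity $\frac{1}{1-\varPof{k}} = 1 + r_k$ at the level of the recursion, guess the closed form $h_k = 1 + 2\sum_{m=1}^{k}\prod_{j=k-m+1}^{k} r_j$ (all contiguous segments ending at $k$, each counted twice), and verify it by a one-line induction on $k$; the final reindexing from (right endpoint, length) to (left endpoint, length) is pure bookkeeping. Both arguments are complete, but yours makes the provenance of the factor $2$ and of the window structure transparent and eliminates the appendix-length algebraic conversion, so it is arguably the preferable write-up of the same underlying idea.
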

One can immediately notice a symmetric property of $\expectedEscape{\varP}{0}$ in \cref{eq: escape time formula}. Specifically, it is invariant to the mirror permutation, i.e.,
\begin{equation*}
    \expectedEscape{\varP}{0} 
    = 
    \expectedEscape{\permInv \varP}{0},
    \;\;
    \forall \varP \in \brk*{0,1}^{\varAlen}.
\end{equation*}
This in turn implies that the pendulum arrangement and its mirror both achieve the same value, and thus proving that one of them is optimal will suffice to conclude \cref{thm:main} (Main Result).
Also note that this implies that ascending and descending orderings of the elements of $\varP$ achieve identical (yet sub-optimal) values. This fact is indicative of a symmetric characteristic of $\expectedEscape{\varP}{0}$ that foreshadows the underlying pendulum arrangement. 

Focusing on the sum over products term in \cref{eq: escape time formula}, we have the following theorem, which states that the pendulum arrangement is  its unique maximizer.

\begin{theorem}[Optimal Sum of Products]
\label{thm:optimal sum prod}
    For any $\varA \in \RR[d]_{++}$ we have that
    \begin{equation*}
            \permSig^* \in \argmax_{\permSig \in \allPermSet} \sum_{m=1}^{\varAlen}
        \sum_{i=1}^{\varAlen - m + 1} 
        \prod_{j=i}^{i + m - 1} \varAof{\permSigof{i}},
            \iff
            \permSig^* \varA \in \brk[c]*{\pendOf{\varA}, \permInv \pendOf{\varA}}
            .
    \end{equation*}
\end{theorem}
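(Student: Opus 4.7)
The plan is a local exchange argument based on adjacent swaps. Writing $y = \sigma \varA$ and letting $F(\sigma) = \sum_{m=1}^{\varAlen} \sum_{i=1}^{\varAlen - m + 1} \prod_{j=i}^{i+m-1} y_j$ denote the objective, the first step is to compute how $F$ changes when the values at positions $i$ and $i+1$ are swapped. Partition the contiguous intervals $[a,b] \subseteq \{1,\ldots,\varAlen\}$ by their intersection with $\{i, i+1\}$: intervals disjoint from this pair are invariant; intervals containing both positions are also invariant because their product contains the symmetric factor $y_i y_{i+1}$; and the remaining two classes can be summed in closed form. A short bookkeeping calculation yields the swap identity
\begin{align*}
    F(\sigma') - F(\sigma) = (y_i - y_{i+1})(R_i - L_i), \qquad L_i = \sum_{a=1}^{i} \prod_{j=a}^{i-1} y_j, \quad R_i = \sum_{b=i+1}^{\varAlen} \prod_{j=i+2}^{b} y_j,
\end{align*}
where $\sigma'$ is $\sigma$ composed with the transposition of $i$ and $i+1$ and empty products equal $1$. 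Crucially, $L_i$ and $R_i$ depend only on entries strictly outside $\{i, i+1\}$.

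From this identity, any maximizer $\sigma^*$ must satisfy $(y_i - y_{i+1})(R_i - L_i) \le 0$ for every $i \in \{1, \ldots, \varAlen - 1\}$. At $i = 1$ the empty left sum gives $L_1 = 1 < R_1$ (all entries are strictly positive), forcing $y_1 \le y_2$; symmetrically $y_\varAlen \le y_{\varAlen - 1}$. A bubble-sort extension of this inequality then shows that the smallest entry of $\varA$ must sit at one of the two extremal positions. Combining with the mirror invariance $F(\sigma) = F(\permInv \sigma)$, which is immediate from the symmetric form of \cref{eq: escape time formula} since the mirror permutes contiguous intervals without altering their factors, we may assume without loss of generality that the smallest entry is at position $1$. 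Reapplying the argument at $i = \varAlen - 1$ then pushes the second-smallest entry to position $\varAlen$.

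The remainder is an induction on $\varAlen$: with the two smallest values pinned at the outer endpoints, the next two smallest entries are forced to the next extremal interior slots, and so on, ultimately producing the pendulum structure characterized by \cref{lemma:sort to pend}. Uniqueness up to mirror follows because the only genuinely free choice is made at the very first step (smallest at position $1$ versus $\varAlen$), yielding precisely $\pendOf{\varA}$ and $\permInv \pendOf{\varA}$.

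The main obstacle is the inductive step. The interior sub-arrangement is not literally a smaller instance of the same sum-of-products problem, because the boundary values pinned at earlier steps continue to contribute to $L_i$ and $R_i$ for interior swaps. A clean remedy is to strengthen the inductive hypothesis so that arbitrary (not necessarily smallest) values are allowed at the boundary, and then verify that the inherited boundary contributions to the $L_i, R_i$ sums do not spoil the sign inequalities needed to propagate the pendulum order inward. This delicate bookkeeping is where the paper's supporting lemmas (the blue nodes in \cref{fig: theorems flow chart}) are expected to do the real work.
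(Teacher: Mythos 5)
Your swap identity is correct, and so are the two endpoint conditions it yields ($y_1\le y_2$ and $y_{\varAlen}\le y_{\varAlen-1}$). The gap is everything after that: the claims that adjacent-swap stationarity forces the smallest entries to the ends and then ``propagates the pendulum order inward'' are not proved, and in fact the implication you need is false. Adjacent-swap stationarity is only a first-order, local condition, and the objective has strict local maxima under adjacent transpositions that are neither the pendulum nor its mirror. Concretely, take the example of \cref{remark: no simple permutation}: for $\varP=(0.17,0.64,0.85,0.71)$, i.e.\ $y=\varP/(1-\varP)\approx(0.205,1.778,5.667,2.448)$, the permutation-dependent part of the objective, $y_1y_2+y_2y_3+y_3y_4+y_1y_2y_3+y_2y_3y_4$, equals $\approx 51.04$, while each of the three adjacent swaps strictly decreases it (to $\approx 20.3$, $42.3$, $44.1$ respectively); yet the pendulum arrangement $(0.205,2.448,5.667,1.778)$ achieves $\approx 51.96$. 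So this non-pendulum $y$ satisfies every inequality $(y_i-y_{i+1})(R_i-L_i)\le 0$. In particular your condition at $i=\varAlen-1$ gives only $y_{\varAlen-1}\ge y_{\varAlen}$, not that position $\varAlen$ holds the second-smallest value --- here it holds the second-largest. Since the set of adjacent-swap-stationary arrangements strictly contains non-pendulum points, no strengthening of the inductive hypothesis or bookkeeping of the $L_i,R_i$ can rescue the argument.

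This obstruction is exactly what the paper's proof is built around, and it takes a genuinely different route. It first reduces to maximizing each window value $\valof$ separately (\cref{thm:general opt sum prod}), then abandons pairwise exchanges entirely in favor of the improving permutation of \cref{def:improving perm}, which performs up to $l/2$ simultaneous exchanges across a symmetry axis. \cref{lemma: impPerm} shows each such multi-exchange weakly increases $\valof$ for every $m$ (strictly for some $m$ unless the input is already fixed up to mirror), and \cref{lemma:pendulumSort} shows that iterating two of these permutations converges to $\pendOf{\varA}$ in $\varAlen/2$ steps; sufficiency and necessity both follow from this monotone convergence. If you want to salvage a local-exchange proof you would at minimum need moves richer than transpositions. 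As written, your proposal establishes some correct necessary conditions on a maximizer, but not the theorem.
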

As we will show next, combining \cref{thm:optimal sum prod} with \cref{lemma:sort to pend} and \cref{proposition:escapeTimeFormula} yields a straightforward proof for \cref{thm:main} (Main Result). The proof of \cref{thm:optimal sum prod} is the crux of this work and is outlined in the following section. Before diving into its details, we show how it can be used to prove \cref{thm:main} (Main Result).

\begin{proof}[of \cref{thm:main} (Main Result)]
    Consider the expression for $\expectedEscape{\varP}{0}$ in \cref{eq: escape time formula}. Denoting ${\varA = \varP / \brk*{1 - \varP}}$, where the equality is element-wise, we have that
	\begin{equation*}
    	\permSig^*
    	\in
    	\argmax_{\permSig \in \allPermSet} \expectedEscape{\permSig \varP}{0}
    	\iff
    	\permSig^*
    	\in
    	\argmax_{\permSig \in \allPermSet} \sum_{m=1}^{\varAlen} \sum_{i=1}^{\varAlen - m + 1} \prod_{j=i}^{i+m-1} \varAof{\permSigof{i}}
    	.
	\end{equation*}
	Next, notice that $\varA \in \RR[\varAlen]_{++}$ and so using \cref{thm:optimal sum prod} we have that
	\begin{equation*}
        \permSig^* \in \argmax_{\permSig \in \allPermSet} \sum_{m=1}^{\varAlen} \sum_{i=1}^{\varAlen - m + 1} \prod_{j=i}^{i+m-1} \varAof{\permSigof{i}}
        \iff
        \permSig^* \varA \in \brk[c]*{\pendOf{\varA}, \permInv \pendOf{\varA}}
        .
    \end{equation*}
	Finally, notice that the function $f(y) = y / \brk*{1 - y}$ is strictly increasing in $[0, 1)$ and thus the sorted order of $\varP$ and $f(\varP) = \varA$ are the same. Since the pendulum arrangement only depends on this order (see \cref{lemma:sort to pend}), we get that
	$
	\permSig^* \varA \in \brk[c]*{\pendOf{\varA}, \permInv \pendOf{\varA}}
	\iff
	\permSig^* \varP \in \brk[c]*{\pendOf{\varP}, \permInv \pendOf{\varP}}
	,
	$ 
	and combining these arguments concludes the proof.
\end{proof}

\section{Sum Over Products}
\label{sec:sum over products}

In this section we will focus on proving \cref{thm:optimal sum prod}. We do so by considering each of the inner summations in \cref{thm:optimal sum prod}, reducing the problem further to individually maximizing each of the inner sums of products (\cref{thm:general opt sum prod}). We then move to define the improving permutation (\cref{def:improving perm}), a uniquely designed permutation that: (1) always improves the sum over products (\cref{lemma: impPerm}); and (2) converges after at most $\varAlen / 2$ applications to the pendulum (optimal) arrangement (\cref{lemma:pendulumSort}). These results will finalize the proof of \cref{thm:optimal sum prod}, thereby concluding the proof of \cref{thm:main} (Main Result).
To that end, we begin by focusing on the following construct.

\begin{definition}[Sum Over Products Value]
For any $\varA \in \RR[\varAlen]$ and $1 \le m \le \varAlen$ define the value function of $\varA$ for window size $m$ as
\begin{equation*}
	\valof[\varA] = \sum_{i=1}^{\varAlen - m + 1} \prod_{j=i}^{i + m - 1} \varAof{j}.
\end{equation*}
\end{definition}

The function $\valof[\varA]$ is a sum over all products of adjacent tuples of length $m$. For example, for $\varAlen = 5$ and $m = 3$ it can be explicitly written as
$
    \valof[\varA] 
    = 
    \varAof{1}\varAof{2}\varAof{3}
    +
    \varAof{2}\varAof{3}\varAof{4}
    +
    \varAof{3}\varAof{4}\varAof{5}.
$
Notice that the expression in \cref{thm:optimal sum prod} is in fact a summation of the sum over products value, $\valof[\varA]$, for various window sizes $1 \leq m \leq \varAlen$. \cref{thm:optimal sum prod} is thus an immediate corollary of the following, more general result.
\begin{theorem}[General Sum Over Products] \label{thm:general opt sum prod}
    For any $\varA \in \RR[\varAlen]_{++}$ we have that
    \begin{enumerate}
        \item (Sufficiency) $\forall\; 1 \le m \le \varAlen$,
            $
                \permSig^* \varA \in \brk[c]*{\pendOf{\varA}, \permInv\pendOf{\varA}}
                \implies
                \permSig^* \in \argmax_{\permSig \in \allPermSet} \valof[\permSig\varA];
            $
        \item (Necessity)
            $
                \permSig^* \in \cap_{m=1}^{\varAlen}\argmax_{\permSig \in \allPermSet} \valof[\permSig\varA]
                \implies
                \permSig^* \varA \in \brk[c]*{\pendOf{\varA}, \permInv\pendOf{\varA}}.
            $
    \end{enumerate}
\end{theorem}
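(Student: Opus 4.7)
The plan is to reduce the joint maximization problem to a pair of clean supporting lemmas about a carefully designed ``improving permutation'' $\permImp$, exactly in the spirit of the paper's outline.

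First I would record a symmetry observation analogous to the one noted after \cref{proposition:escapeTimeFormula}: $\valof[\varA] = \valof[\permInv \varA]$ for every $m$, because reversing $\varA$ induces a bijection on length-$m$ windows. Consequently the mirror of any maximizer is again a maximizer with the same value, so it suffices to reach one representative.

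Next comes the combinatorial core. Given an $\varA$ that is not already in pendulum form, let $l$ be the first ``level'' at which the defining pendulum inequalities $\varAof{l} \le \varAof{d+1-l} \le \varAof{l+1}$ fail, and let $\permImp$ be the permutation that repairs this violation by a targeted transposition (or short chain of transpositions) supported on the positions $l$, $d+1-l$, and $l+1$, leaving everything outside this ``shell'' fixed. With this definition I would prove two lemmas. Lemma (\emph{improvement}): for every $1 \le m \le d$, $\valof[\permImp \varA] \ge \valof[\varA]$, with strict inequality for at least one $m$. The strategy is to split the sum defining $\valof[\varA]$ according to whether a given window touches zero, one, or both of the repositioned entries: windows touching neither or both are unchanged, and among the remaining windows one groups them into matched pairs differing only in which of the two repositioned entries they cover. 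Factoring out the common product reduces the claim on each pair to an elementary inequality of the form $(a-b)(P_1 - P_2) \ge 0$, where $a > b$ are the exchanged values and $P_1, P_2$ are products of untouched entries; $\permImp$ is defined precisely so that the ``larger'' value lands in the position covered by the larger-product windows. Lemma (\emph{termination}): iterating $\permImp$ reaches $\pendOf{\varA}$ in at most $\floor{\varAlen/2}$ steps. I would define an integer potential, namely the largest level up to which the pendulum inequalities already hold from the outside in, and show that each application of $\permImp$ increases this potential by at least one. By \cref{lemma:sort to pend} the fixed point is unique and equals $\pendOf{\varA}$.

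With these two lemmas the theorem follows quickly. For sufficiency, starting from any $\permSig \varA$ and applying $\permImp$ finitely many times produces $\pendOf{\varA}$ along a chain on which every $\valof$ is non-decreasing, so $\valof[\pendOf{\varA}] \ge \valof[\permSig \varA]$ for every $\permSig$ and every $m$, and the mirror case is handled by the symmetry observation above. For necessity, if $\permSig^* \varA \notin \brk[c]*{\pendOf{\varA}, \permInv \pendOf{\varA}}$, then either $\permSig^* \varA$ or $\permInv \permSig^* \varA$ violates the pendulum property, and applying $\permImp$ to that representative strictly improves $\valof$ for some $m$, contradicting $\permSig^* \in \cap_{m=1}^{\varAlen} \argmax_{\permSig} \valof[\permSig \varA]$.

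The main obstacle is the improvement lemma, specifically arranging for a single local correction to weakly improve every window size at once. Naive adjacent transpositions already preserve $\valof$ monotonicity for some $m$ but not others, so $\permImp$ has to be tuned so that the window-pair bookkeeping works uniformly in $m$, keeping track of exactly how many length-$m$ windows ``see'' each of the three affected positions $l$, $l+1$, and $d+1-l$. I expect this is where the pendulum pattern is forced by the combinatorics, and where the uniqueness (up to mirror) in the necessity direction ultimately originates.
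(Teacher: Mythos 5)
There is a genuine gap, and it sits exactly where you suspected: the improvement lemma is false for the local move you define. A permutation that repairs only the first violated pendulum inequality by a transposition (or short chain) supported on the three positions $l$, $l+1$, $\varAlen+1-l$ does \emph{not} weakly increase $\valof$ for every $m$. Concretely, take $\varA = (1,2,3,100,10)$ with $\varAlen = 5$. The first violated pendulum inequality is $\varAof{5} \le \varAof{2}$, so your repair swaps positions $2$ and $5$, giving $(1,10,3,100,2)$. Then $\val(\varA;2) = 2+6+300+1000 = 1308$ but $\val(\cdot\,;2) = 10+30+300+200 = 540$ after the swap: the change equals $(10-2)(\varAof{1}+\varAof{3}-\varAof{4}) < 0$. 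The sign of the bracketed factor $\sum_{W \ni u, W \not\ni v}\prod_{j\in W\setminus\{u\}}\varAof{j} - \sum_{W \ni v, W \not\ni u}\prod_{j\in W\setminus\{v\}}\varAof{j}$ in your $(a-b)(P_1-P_2)$ step depends on $m$ and on the untouched entries, so no single transposition not aligned with a symmetry axis can be tuned to work uniformly in $m$; \cref{remark: no simple permutation} is the paper's own flag for this obstruction.

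The paper escapes this by making the improving permutation \emph{global}: $\permImp$ of \cref{def:improving perm} performs up to $l/2$ swaps simultaneously, exchanging \emph{every} pair $(j,\,l+1-j)$ that is out of order across the symmetry axis $(l+1)/2$. Only then do the length-$m$ windows pair up as $A_i = \win{i}$ with $B_i = \win{(l+2-m)-i}$ so that the product $A_iB_i$ is invariant under $\permImp$ while $B_i$ becomes the maximum (\cref{lemma:valueDecomp,lemma:improvingWindows}), and the elementary ``rectangle circumference'' inequality of \cref{lemma:aux1} delivers $A_i + B_i$ nondecreasing for every $m$ at once. Your termination argument would also have to change accordingly: the paper iterates the composite $\permInv\permImp[\varAlen-1]\permInv\permImp[\varAlen]$ and proves convergence in $\varAlen/2$ rounds via an odd--even (parallel bubble sort) analysis conjugated through $\permSortToPend$ (\cref{lemma:pendulumSort}), not via a one-violation-per-step potential. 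Your mirror-symmetry observation and the final deduction of sufficiency and necessity from the two lemmas do match the paper, but as written the central lemma they rest on does not hold.
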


\begin{figure}[t!]
\centering
\includegraphics[width=0.7\textwidth]{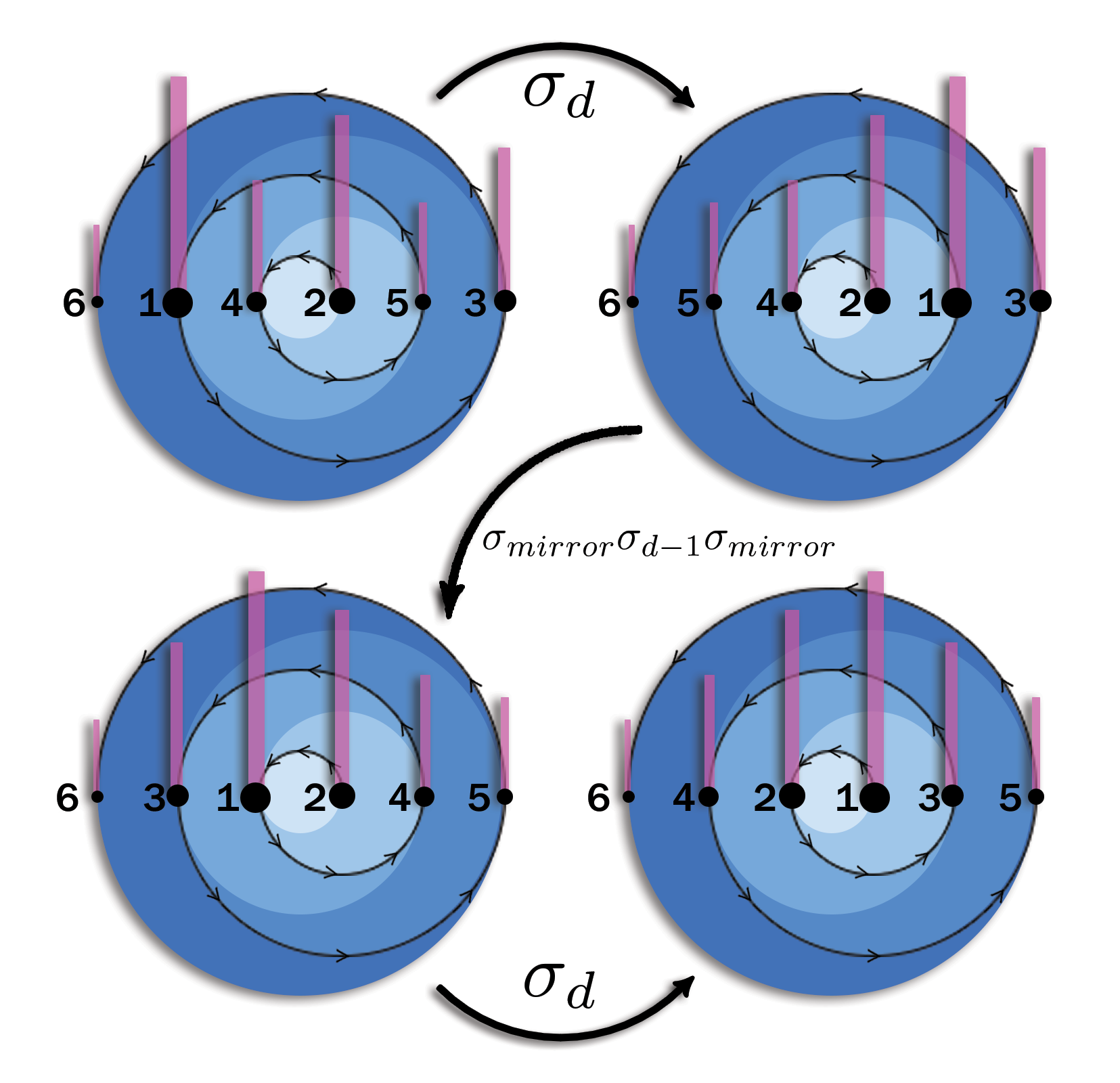}
\caption{\label{fig: improving permutation} An example of applying the improving permutations $\permImp[\varAlen]$ and $\permInv\permImp[\varAlen-1]\permInv$ iteratively on some given vector $\varA$. Plot shows direction in which switching of elements occur. Small values follow the circular arrows, whereas large values move in the reverse direction. Elements switch until reaching their final position in the pendulum arrangement.}
\end{figure}

\subsection{Improving Permutation}
The main tool for proving \cref{thm:general opt sum prod} is the following permutation.
\begin{definition}[Improving Permutation] \label{def:improving perm}
For ${l = 1,\ldots, \varAlen}$ define the $l^{th}$ improving permutation of a vector $\varA \in \RR[\varAlen]$ by
\begin{equation}
\label{eq:perm imp}
	\permImpOf{i}
	=  
	\begin{cases}
	{l + 1 - i}, &\quad \text{or}~ 
	\begin{aligned}
		&\varAof{i} > \varAof{l + 1 - i},~ i \le l / 2 \\
		&\varAof{i} < \varAof{l + 1 - i},~ l / 2 < i \le l
	\end{aligned} \\
	{i}, &\quad \text{otherwise}.
	\end{cases}
\end{equation}
\end{definition}
We note that the vector $\varA$, with respect to which $\permImp$ is defined, is always the vector it permutes. While it is not denoted explicitly in $\permImp$, its identity will always be clear from context.
The improving permutation, $\permImp$, compares elements across the symmetry axis $\brk*{l + 1} / 2$, and switches their positions such that the larger element is to the right of the symmetry axis (see \cref{fig: improving permutation}). Notice that this may result in up to $l / 2$ exchanges. While this may seem overly complicated, it is easy to give counter examples where any exchange of two elements will decrease the outcome (see \cref{remark: no simple permutation}).
As its name suggests, applying $\permImp$ to a vector increases its sum over products value, as shown by the following lemma. An exhaustive proof is provided \cref{sec:proofOfImpPermLemma}.

\begin{lemma}[Improving Permutation]
\label{lemma: impPerm}
	For all $\varA \in \RR[\varAlen]_{++}$ and $m, l \in \brk[c]*{1, \ldots, \varAlen}$ we have that
	\begin{equation*}
		\valof[\permImp\varA] \ge \valof[\varA]
		.
	\end{equation*}
	Moreover, if 
	$\permImp \varA \notin \brk[c]*{\varA, \permInv\varA}$ then there exists $m$ such that the inequality is strict.
\end{lemma}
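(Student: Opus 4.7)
The plan is to compare $\valof[\permImp\varA]$ with $\valof[\varA]$ by decomposing the sum over windows according to their interaction with the reflection $\rho(k) := l+1-k$ (acting as the identity on positions beyond $l$), and showing that each grouped contribution is non-decreasing under $\permImp$. The key auxiliary tool, which I would prove first by induction on $n$, is the following pair-sorting inequality: for positive reals $\{a_j, b_j\}_{j=1}^n$,
\begin{equation*}
    \prod_{j=1}^n a_j + \prod_{j=1}^n b_j \le \prod_{j=1}^n \min(a_j, b_j) + \prod_{j=1}^n \max(a_j, b_j),
\end{equation*}
with equality if and only if the sign of $a_j - b_j$ is constant (weakly) across all $j$. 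The inductive step uses the identity $AB = CD$ at the $(n-1)$-level (from $a_j b_j = \min\cdot\max$ coordinatewise) together with the inductive hypothesis $A + B \le C + D$ to conclude $C \le \min(A,B) \le \max(A,B) \le D$ (two pairs with equal products and larger sum are more spread out). A short case split on whether $a_n \le b_n$ or $a_n > b_n$ then closes the induction, using $A - C \le D - B$.

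With this inequality in hand, I would partition the windows $W = [s, s+m-1]$ of length $m$ into three types: (i) right windows $W \subseteq [l+1, \varAlen]$, trivially invariant; (ii) left windows $W \subseteq [1, l]$, paired with their reflection $\rho(W) = \{l+1-k : k \in W\}$; and (iii) straddling windows with $s \le l < s+m-1$. For paired left windows, the self-paired case $W = \rho(W)$ is invariant (any swapped pair sits entirely inside $W$), the disjoint case $W \cap \rho(W) = \emptyset$ follows directly from the pair-sorting inequality applied to the coordinate pairs $(\varAof{k}, \varAof{l+1-k})_{k \in W}$, and the overlapping-but-distinct case is reduced to the disjoint one by factoring out the invariant product $\prod_{k \in W \cap \rho(W)} \varAof{k}$. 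For straddling windows, only the left part $W_L = [s, l]$ can change, and an elementary case split --- on whether $s > l/2$ (so every $(\permImp\varA)_k$ for $k \in W_L$ is a max over its pair) or $s \le l/2$ (so pairs fully inside $W_L$ are invariant and pairs with only the right half in $W_L$ contribute a max) --- gives $\prod_{k \in W_L}(\permImp\varA)_k \ge \prod_{k \in W_L}\varAof{k}$. Summing these non-decreasing contributions yields $\valof[\permImp\varA] \ge \valof[\varA]$.

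For the strict inequality, suppose $\permImp\varA \notin \brk[c]*{\varA, \permInv\varA}$. If there exist pair indices $i^* \ne i'$ in $[1, \lfloor l/2 \rfloor]$ with $\varAof{i^*} > \varAof{l+1-i^*}$ (a swap) and $\varAof{i'} \le \varAof{l+1-i'}$ (no swap), then taking $W$ to be the shortest interval in $[1, \lfloor l/2 \rfloor]$ containing both (with $m = |i^* - i'| + 1$) makes $(W, \rho(W))$ a disjoint reflected pair whose pair-sorting inequality is strict by the equality characterization, and this strictness propagates to $\valof$. Otherwise all pairs swap, so $\permImp$ reverses the first $l$ entries; combined with $\permImp\varA \ne \permInv\varA$ this forces $l < \varAlen$, and then the straddling window $W = [l, l+1]$ with $m = 2$ yields strict gain because $(\permImp\varA)_l = \varAof{1} > \varAof{l}$ while $(\permImp\varA)_{l+1} = \varAof{l+1}$ is unchanged.

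The main obstacle is the pair-sorting inequality itself, in particular the ordering $C \le \min(A,B) \le \max(A,B) \le D$ that drives the induction; once this is in place, the rest reduces to bookkeeping, the most delicate piece being the overlapping-but-distinct reflection case for left windows, where one must cleanly separate the invariant intersection from its outer sortable complement.
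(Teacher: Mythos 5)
Your proof of the weak inequality is sound and structurally the same as the paper's: you group the length-$m$ windows inside $[1,l]$ into reflection pairs (with the symmetric, self-paired window invariant), treat windows that reach past position $l$ individually, and reduce the overlapping reflected pairs to the disjoint case by factoring out the invariant symmetric intersection. This is exactly the paper's value decomposition plus its ``invariant window pairs'' and ``improving single windows'' lemmas; the only real difference is the engine for the pair inequality, where you use the coordinatewise inequality $\prod a_j+\prod b_j\le\prod\min(a_j,b_j)+\prod\max(a_j,b_j)$ (proved by induction), while the paper combines invariance of the product of a reflected pair with a ``same product, larger maximum implies larger sum'' lemma. Both are correct for strictly positive entries and buy essentially the same thing.

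The strictness argument, however, has a genuine gap. In your Case A you only extract a swapped pair $i^*$ (so $\varAof{i^*}>\varAof{l+1-i^*}$) and a non-swapped pair $i'$ with $\varAof{i'}\le\varAof{l+1-i'}$; if that pair is an \emph{equal} pair, the signs over your chosen window are weakly constant, and by your own equality characterization the pair-sorting inequality is an equality, not strict. Concretely, take $\varAlen=5$, $l=4$, $\varA=(3,2,2,1,9)$: pair $(1,4)$ swaps, pair $(2,3)$ is equal and does not swap, so Case A applies with $W=[1,2]$, $\rho(W)=[3,4]$, yet $6+2=2+6$; moreover no strictly reversed pair exists at all, so no Case-A window can ever be strict, and your Case B (``all pairs swap'') does not apply either, even though $\permImp[4]\varA=(1,2,2,3,9)\notin\{\varA,\permInv\varA\}$ and strictness does hold (it comes from the straddling window $[4,5]$ with $m=2$: $1\cdot 9\to 3\cdot 9$). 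The dichotomy must be drawn on strict comparisons: if some pair has $\varAof{i}>\varAof{l+1-i}$ and another has $\varAof{i}<\varAof{l+1-i}$, your spanning-window argument works; otherwise $\permImp$ acts on $[1,l]$ as the full reversal of that block, so $l=\varAlen$ is ruled out by $\permImp\varA\ne\permInv\varA$, and for $l<\varAlen$ you must use a straddling window through a strictly increased right-half position (e.g.\ $[l+1-i^*,\,l+1]$) --- note your specific choice $[l,l+1]$ can also fail when the pair $(1,l)$ is an equal pair. This is precisely how the paper splits the strictness claim: a strictly reversed and a strictly non-reversed pair when $l=\varAlen$ (the latter extracted from $\permImp\varA\ne\permInv\varA$), and a single straddling window when $l\le\varAlen-1$.
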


\begin{figure} %
\centering
\includegraphics[width=0.4311\textwidth]{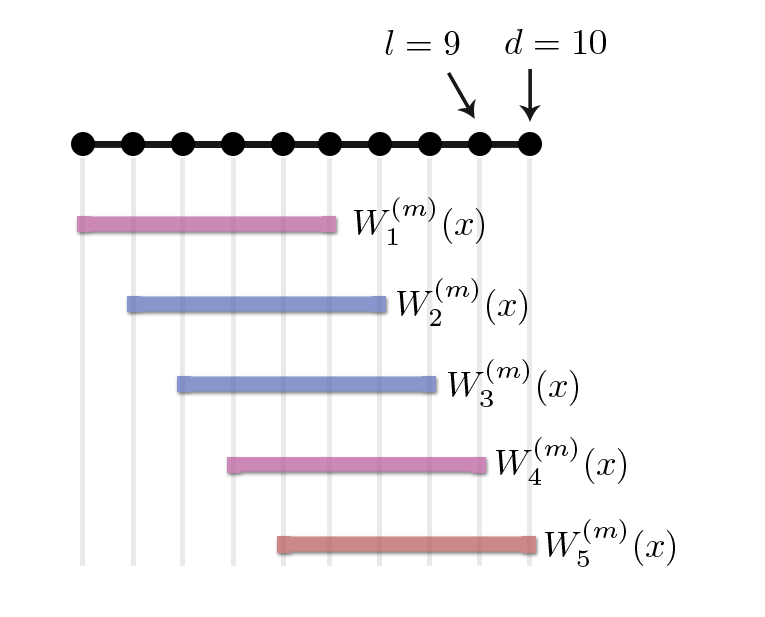}
\includegraphics[width=0.5589\textwidth]{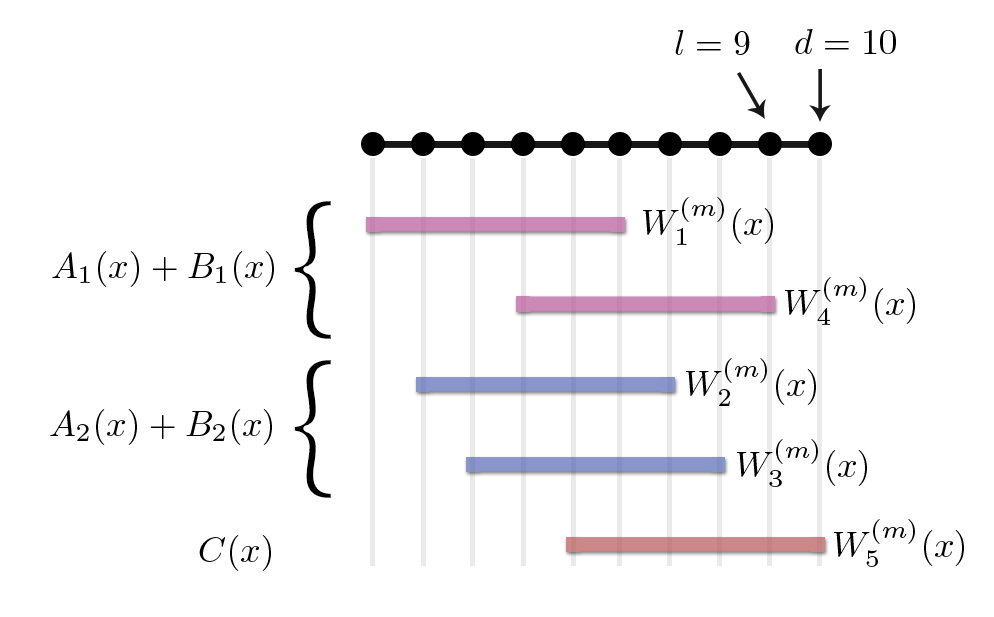}
\caption{
\label{fig: windows}
Depiction of windows as defined in the sketch proof of \cref{lemma: impPerm}. Both figures show windows for the case of $\varAlen = 10, m = 6$, with choice of $l = 9$. (left) Windows as they would be summed over in $\valof$. (right) Windows are reorganized to fit the structure of \cref{eq: reorg J}, which ensures the improving permutation increases each term individually.
}
\end{figure}

\begin{proof}[sketch of \cref{lemma: impPerm}]
    We begin by denoting the product over a ``window'' of size $m$ starting at $i$ by $\winOf[m]{i}{\varA} = \prod_{j=i}^{i + m - 1} \varAof{j}$, i.e., $\valof = \sum_{i=1}^{} \winOf[m]{i}{\varA}$. 
    With some algebra, we then show that
    \begin{equation}
    \label{eq: reorg J}
        \valof
        =
        \sum_{i=1}^{\ceil{\frac{l-m}{2}}} \brk[s]*{A_i\brk*{\varA} + B_i\brk*{\varA}}
        +
        C(x, m, l),
    \end{equation}
    where $A_i(\varA) = \winOf[m]{i}{\varA}$, $B_i(\varA) = \winOf[m]{(l + 2 - m) - i}{\varA}$, and
    $
        C(x, m, l)
        \approx
        \sum_{i = l + 2 - m}^{\varAlen - m + 1} \winOf[m]{i}{\varA}
    .$
    \cref{fig: windows} depicts an example of how \cref{eq: reorg J} reorganizes the elements of $\valof$. 
    
    In \cref{eq: reorg J}$, A_i$ and $B_i$ were chosen such that if $\varAof{j}$ participates in $A_i(\varA)$ then $\varAof{l+1 - j}$ participates in $B_i(\varA)$. Since these are the only kind of switches $\permImp[l]$ makes, we conclude that
    $
        A_i(\varA)B_i(\varA)
        =
        A_i(\permImp[l]\varA)B_i(\permImp[l]\varA)
        .
    $
    Since $\permImp[l]$ puts the larger element in $l + 1 - j$, i.e., in $B_i$, we also have that
    $
        B_i(\permImp[l]\varA) \ge \max\brk[c]*{A_i(\varA), B_i(\varA)},
    $
    with strict inequality if only some but not all of the elements were switched.
    Combining the last two claims, it is immediate to conclude that
    \begin{equation*}
        A_i\brk*{\permImp[l]\varA} + B_i\brk*{\permImp[l]\varA}
        \ge
        A_i\brk*{\varA} + B_i\brk*{\varA}.
    \end{equation*}
    This is equivalent to saying that elongating the longer side of a rectangle while maintaining its area fixed (by shortening the other side) increases its circumference. The above holds for the relevant indices and thus showing that
    $
        C(\permImp[l]\varA, m, l) \ge C(\varA, m, l)
    $
    concludes the proof. This is straightforward since $\permImp[l]$ essentially increases each of its terms individually. 
\end{proof}
\begin{remark}
\label{remark: no simple permutation}
When $m=2$, it is always possible to find a so called ``improving" permutation that only exchanges two elements; however, this is not the case for $m \ge 3$. To see this, take for example, the case of $\varP = (0.17, 0.64, 0.85, 0.71)$. Exhaustive search shows that this is the second to best ordering and thus any improvement must lead to one of the optimal orderings $(0.64, 0.85, 0.71, 0.17)$ or its mirror $(0.17, 0.71, 0.85, 0.64)$. Notice that any such permutation must indeed exchange more than two elements. In other words, there exists an initialization vector $\varP$ for which no ``simple" permutation (i.e., one which exchanges only two elements) could iteratively converge to the optimal ordering. This motivates the use of more elaborate improving permutations as proposed in \cref{def:improving perm}.
\end{remark}

\begin{figure} %
\centering
\includegraphics[width=\textwidth]{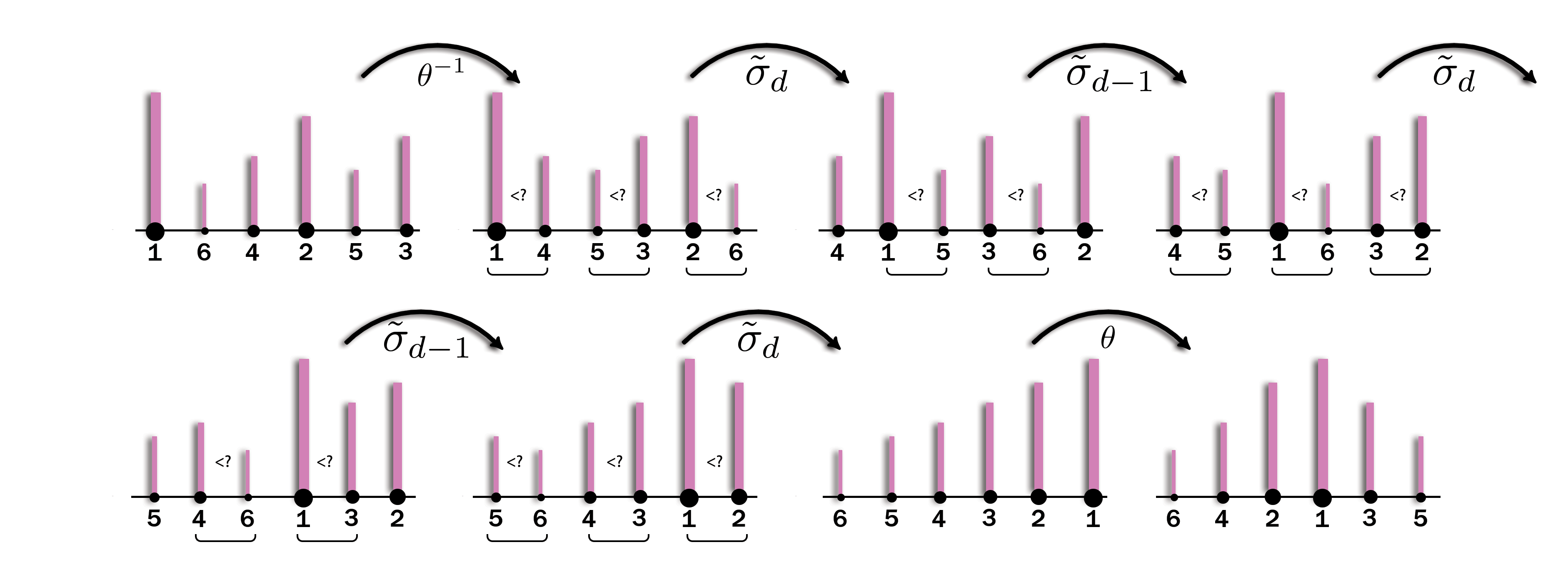}
\caption{\label{fig: pendulum sort} 
Pendulum Sort: An example of an application of the pendulum sort permutation $\permSortToPend \brk*{\permTilde[\varAlen - 1] \permTilde[\varAlen]}^{\frac{d}{2}} \permSortToPend^{-1}$ iteratively. $\permTilde[\varAlen]$ and $\permTilde[\varAlen - 1]$ compare element pairs, switching them whenever the left element is larger than the neighbor on its right. $\permTilde[\varAlen - 1]$ compares pairs of elements in even indices, whereas $\permTilde[\varAlen]$ compares them at odd indices. Note that application of $\permTilde[\varAlen]$ or $\permTilde[\varAlen - 1]$ on a sorted array is the identity permutation. }
\end{figure}

\subsection{Pendulum Sort}

Having established that $\permImp$ ($1 \le l \le \varAlen$) are always improving, we show that applying them consecutively converges to a pendulum arrangement. More specifically, the following lemma uses $\permImp[\varAlen], \permImp[\varAlen -1]$ and $\permInv$ (see \cref{def: mirror perm,def:improving perm}) to construct such a sequence. An exhaustive proof of the lemma is provided \cref{sec:proofOfPendSort}.

\begin{lemma}[Pendulum Sort]
\label{lemma:pendulumSort}
For all $\varA \in \RR[\varAlen], k \geq \frac{d}{2}$, we have that $(\permInv\permImp[\varAlen-1]\permInv\permImp[\varAlen])^k \varA = \pendOf{\varA}$.
\end{lemma}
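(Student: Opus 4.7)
The plan is to recognize the composition $T := \permInv \permImp[\varAlen-1] \permInv \permImp[\varAlen]$ as a single ``round'' (one odd phase followed by one even phase) of the classical odd-even transposition sort, viewed in a coordinate system indexed by pendulum rank, and then invoke the standard $\lceil \varAlen/2 \rceil$-round convergence guarantee for that algorithm.

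First I would re-index positions via $\permSortToPendInv$. By \cref{lemma:sort to pend}, a vector $w \in \RR[\varAlen]$ is pendulum-arranged if and only if the sequence $w_{\permSortToPendInvOf{1}}, w_{\permSortToPendInvOf{2}}, \ldots, w_{\permSortToPendInvOf{\varAlen}}$ is nondecreasing; equivalently, setting $y_r := \varAof{\permSortToPendInvOf{r}}$, the pendulum arrangement of $\varA$ is the same as ``$y$ is sorted in ascending order''. In this coordinate system, the problem of showing $T^k \varA = \pendOf{\varA}$ becomes the problem of showing that $k$ applications of $T$ sort $y$.

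The heart of the argument is to check that, in this $y$-indexing, $\permImp[\varAlen]$ realizes the odd-phase compare-and-swap of odd-even sort, acting on the rank pairs $(1,2),(3,4),\ldots$, and that $\permInv \permImp[\varAlen-1] \permInv$ realizes the even-phase compare-and-swap, acting on the rank pairs $(2,3),(4,5),\ldots$. Using the explicit formula $\permSortToPendOf{i} = 2i-1$ for $i \leq (\varAlen+1)/2$ and $\permSortToPendOf{i} = 2(\varAlen+1-i)$ otherwise, one verifies that the position pair $(i, \varAlen+1-i)$ compared by $\permImp[\varAlen]$ has ranks $(2i-1, 2i)$, and \cref{def:improving perm} puts the larger value at the higher-rank endpoint $\varAlen+1-i$. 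A symmetric unfolding of the conjugation by $\permInv$ shows that $\permInv \permImp[\varAlen-1] \permInv$ compares the position pairs $(i+1, \varAlen+1-i)$ for $1 \le i \le \lfloor (\varAlen-1)/2 \rfloor$, whose ranks are $(2i+1, 2i)$, and again places the larger value at the higher-rank endpoint.

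With this identification, one application of $T$ is exactly one round of odd-even transposition sort on the length-$\varAlen$ array $y$, so $T^k$ realizes $k$ rounds. The classical guarantee that odd-even transposition sort terminates at the sorted array after at most $\lceil \varAlen/2 \rceil$ rounds then yields $T^k \varA = \pendOf{\varA}$ for every integer $k \geq \varAlen/2$. The main obstacle is the index/parity bookkeeping in the middle step: the formula for $\permSortToPend$ switches branches at $(\varAlen+1)/2$, the central position is fixed by $\permImp[\varAlen]$ only when $\varAlen$ is odd, and position $\varAlen$ is always fixed by $\permImp[\varAlen-1]$, so a short case split on the parity of $\varAlen$ is needed to verify the correspondence cleanly at the boundary indices.
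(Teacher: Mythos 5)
Your proposal is correct and follows essentially the same route as the paper: conjugating by $\permSortToPend$, identifying $\permImp[\varAlen]$ and $\permInv\permImp[\varAlen-1]\permInv$ as the odd and even compare-and-swap phases of odd-even transposition sort in the rank coordinates (this is exactly \cref{lemma:permTildeExpression}), and concluding from the $\lceil\varAlen/2\rceil$-round convergence of that sort. The only difference is that the paper proves this convergence bound from scratch via a potential-function argument on the inversion counts $N_i$ (\cref{lemma:NiExpression}), whereas you invoke the classical guarantee.
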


\begin{proof}[sketch of \cref{lemma:pendulumSort}]
    Recall that $\permSortToPend$ from \cref{lemma:sort to pend} satisfies $\permSortToPend \sortOf{\varA} = \pendOf{\varA}$.
    We define $\permTilde[\varAlen], \permTilde[\varAlen-1]$ as follows
    \begin{align*}
        \permTilde[\varAlen] = \permSortToPend^{-1} \permImp[\varAlen] \permSortToPend,
        \qquad
        \permTilde[\varAlen - 1] = \permSortToPend^{-1} \brk*{\permInv \permImp[\varAlen - 1] \permInv} \permSortToPend,
    \end{align*}
    and a simple telescoping argument yields that
    \begin{equation*}
        \brk*{\permInv \permImp[\varAlen - 1] \permInv \permImp[\varAlen]}^k
        =
        \permSortToPend \brk*{\permTilde[\varAlen - 1] \permTilde[\varAlen]}^k \permSortToPend^{-1}.
    \end{equation*}
    We then show that for any $y \in \RR[\varAlen]$, $\brk*{\permTilde[\varAlen - 1] \permTilde[\varAlen] }^k y = \sortOf{y}$ for all $k \geq \frac{\varAlen}{2}$. Recalling \cref{lemma:sort to pend} and choosing $y = \permSortToPend^{-1} \varA$ concludes the proof. To show that $\brk*{\permTilde[\varAlen - 1] \permTilde[\varAlen] }^k y = \sortOf{y}$ we first find explicit expressions for $\permTilde[\varAlen], \permTilde[\varAlen - 1]$. These expressions are sorting procedures on the odd and even odd pairs of $\varA$ respectively. This means that applying them consecutively performs a sort of parallel bubble sort, which is depicted in \cref{fig: pendulum sort}. A simple analysis shows that this converges in $\varAlen^2$ steps and a more careful analysis gives the desired $\varAlen / 2$ steps.
\end{proof}

\begin{proof}[of \cref{thm:general opt sum prod}]
    First, recall that $\valof[\varA] = \valof[\permInv\varA]$ and so using \cref{lemma: impPerm}~(Improving Permutation) recursively we get that
    \begin{equation*}
        \valof[{(\permInv\permImp[\varAlen-1]\permInv\permImp[\varAlen])^{k} \varA}]
        \ge
        \valof,
        \;\forall k \geq 0.
    \end{equation*}
    Taking $k \ge \varAlen / 2$ and using \cref{lemma:pendulumSort}~(Pendulum Sort) we then get that
    $
        \valof[\pendOf{\varA}]
        \ge
        \valof[\varA],
    $
    and since this holds for any permutation of $\varA$, the first part of the proof is concluded.
    The uniqueness claim follows from the strict inequality condition of \cref{lemma: impPerm}~(Improving Permutation). More concretely, let 
    \begin{align*}
        \permSig^* 
        \in
        \cap_{m=1}^{\varAlen}\argmax_{\permSig \in \allPermSet} \valof[\permSig\varA]
        ,
    \end{align*}
    and assume in contradiction that $\permSig^* \varA \notin \brk[c]*{\pendOf{\varA}, \permInv \pendOf{\varA}}$. However, from \cref{lemma:pendulumSort}~(Pendulum Sort) we know that
    $
    {
        (\permInv\permImp[\varAlen-1]\permInv\permImp[\varAlen])^{\varAlen} \permSig^* \varA
        =
        \pendOf{\varA}
        ,
    }
    $
    and thus one of the terms composing
    $
        (\permInv\permImp[\varAlen-1]\permInv\permImp[\varAlen])^{\varAlen}
    $
    must change its input to something other than its mirror.
    The strict inequality condition of \cref{lemma: impPerm}~(Improving Permutation) then implies that there exists $m$ such that
    ${
        \valof[\pendOf{\varA}]
        >
        \valof[\permSig^* \varA]
        ,}
    $
     contradicting the optimality of $\permSig^*$.
\end{proof}

\begin{figure}[t!]
\includegraphics[width=0.7\textwidth]{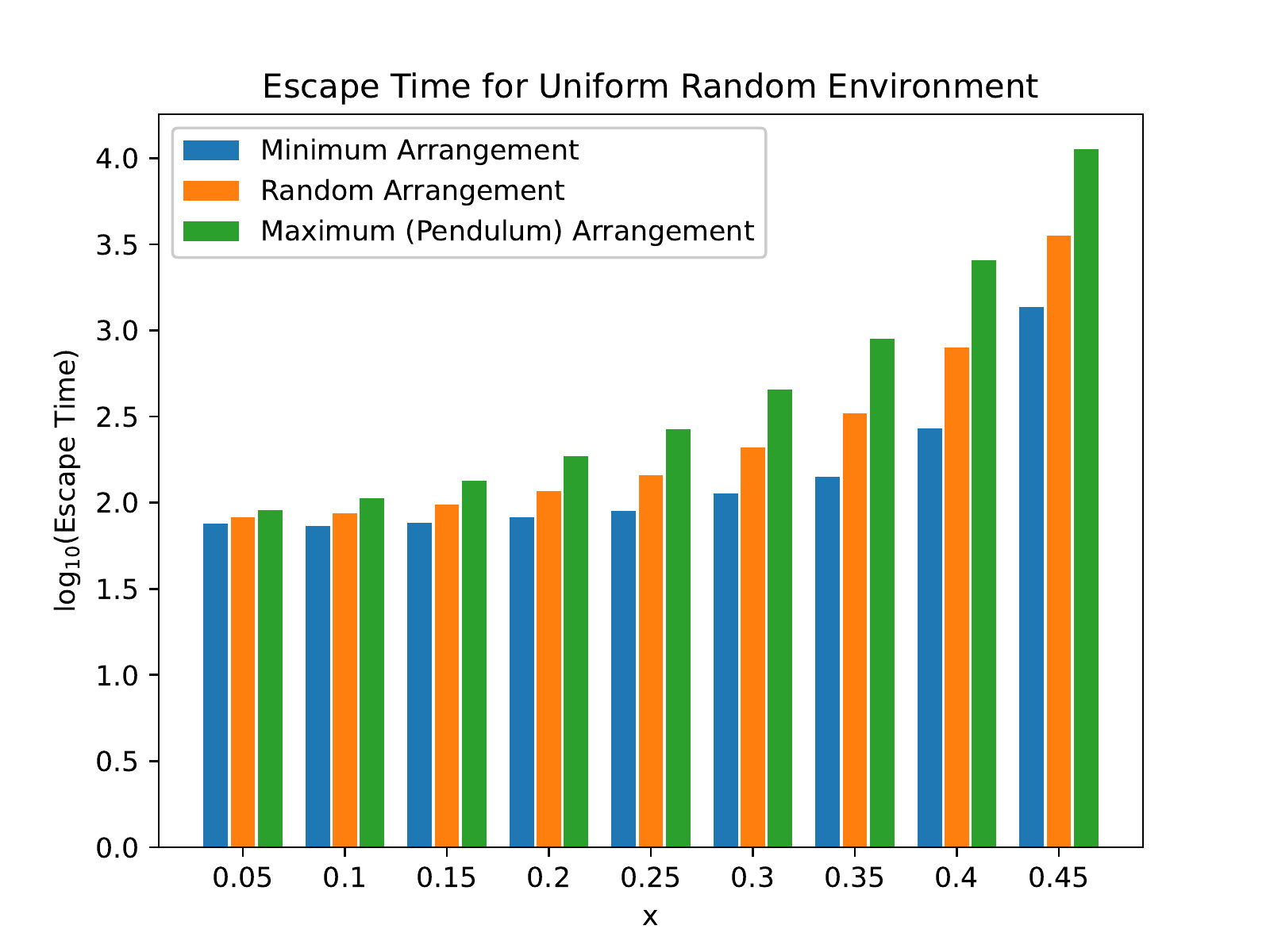}
\centering
\caption{\label{fig: min-max comparison}
Comparison of various arrangements and their escape times for a random walk on a line with $\varAlen = 8$ nodes in a random environment in which transition values were sampled i.i.d. from a uniform distribution in the interval $[0.5-x, 0.5+x]$ for various values of $x$. The random arrangement is the mean escape time taken w.r.t. the uniform measure over all possible permutations. The presented value for all statistics was averaged over 1000 different instantiations of the random environment. }
\end{figure}

\newpage
\section{Discussion and Future Work}
In this section we demonstrate a continuous extension to our main result, conduct numerical experiments on random environments that illustrate the significance of our findings, and discuss alternate statistics of the escape time.

\subsection{Continuous Weight Optimization}

We consider the following continuous optimization variant of the combinatorial problem~\eqref{eq: max escape time}:
\begin{equation}
    \label{eq: control problem}
    \tag{P2}
    \varP^*
    \in
    \arg\max_{p \in C} \expectedEscape{\varP}{0}
    ,
\end{equation}
where $C \subseteq \left[0,1\right)^{\varAlen}$ is a set of budget constraints on the transition probabilities. The difficulty of \eqref{eq: control problem} strongly depends on the structure of the set $C$. \cref{thm:main} (Main Result) implies that for
$
C_{\varP}
=
\brk[c]{\permSig p \;\big|\; \permSig \in \allPermSet}
,
$
\eqref{eq: max escape time} is efficiently solvable. The following proposition readily follows from \cref{thm:main} (Main Result), and extends it to a slightly more general class of constraints.
For $A \subseteq C$ let $\text{ext}(A)$ denote the extreme points of the convex hull of $A$, and
$\pendOf{A} = \brk[c]{\pendOf{\varP} \;\big|\; \varP \in A}$ (see \cref{def:pendulum arrangement}).
\begin{proposition}
\label{prop: extreme points}
For $C \subseteq \left[\frac{1}{2},1\right)^{\varAlen}$, if
$
     \pendOf{(\text{ext}(C))} \subseteq \text{ext}(C)
$
then $\exists \varP^* \in \pendOf{(\text{ext}(C))}$. 
\end{proposition}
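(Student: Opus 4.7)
The plan is to show that $\expectedEscape{\varP}{0}$, viewed as a function of $\varP$ on $[1/2, 1)^\varAlen$, is convex, and then combine this with \cref{thm:main} (Main Result). Convexity forces a maximizer over $C$ to lie at some extreme point $\varP^* \in \text{ext}(C)$; the Main Result then yields a pendulum rearrangement $\pendOf{\varP^*}$ with escape time at least as large; and the hypothesis $\pendOf{(\text{ext}(C))} \subseteq \text{ext}(C)$ keeps $\pendOf{\varP^*}$ feasible, producing the desired maximizer in $\pendOf{(\text{ext}(C))}$.

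The crux is establishing convexity of $\expectedEscape{\cdot}{0}$ on $[1/2, 1)^\varAlen$. By \cref{proposition:escapeTimeFormula}, it suffices to prove that each summand $\prod_{j=i}^{i+m-1} \frac{\varPof{j}}{1 - \varPof{j}}$ is convex there. I would argue this via log-convexity: the univariate map $p \mapsto \log \frac{p}{1-p}$ has second derivative $-1/p^2 + 1/(1-p)^2$, which is nonnegative precisely when $p \ge 1/2$. Summing these convex coordinate maps produces a convex function of $\varP$, and exponentiating preserves convexity (positive log-convex functions are convex). A nonnegative linear combination of convex functions is convex, so $\expectedEscape{\cdot}{0}$ is convex on $[1/2,1)^\varAlen$.

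With convexity in hand, I invoke the standard fact that a convex function on a compact convex set attains its maximum at an extreme point. Since $\text{ext}(C) \subseteq C \subseteq \text{conv}(C)$, this gives $\max_{\varP \in C} \expectedEscape{\varP}{0} = \max_{\varP \in \text{ext}(C)} \expectedEscape{\varP}{0}$. Pick a maximizer $\varP^* \in \text{ext}(C)$ and apply \cref{thm:main} to obtain $\expectedEscape{\pendOf{\varP^*}}{0} \ge \expectedEscape{\varP^*}{0}$; by hypothesis $\pendOf{\varP^*} \in \text{ext}(C) \subseteq C$, so $\pendOf{\varP^*} \in \pendOf{(\text{ext}(C))}$ is itself a maximizer of $\expectedEscape{\cdot}{0}$ over $C$. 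The main obstacle is the sign check in the log-convexity computation, which hinges on $p \ge 1/2$ and fails below that threshold --- precisely why the hypothesis $C \subseteq [1/2,1)^\varAlen$ is essential. A secondary subtlety is the implicit compactness of $C$, needed to ensure $\text{conv}(C)$ is compact and its extreme points lie in $C$.
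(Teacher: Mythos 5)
Your proposal is correct and follows essentially the same route as the paper: establish convexity of $\expectedEscape{\varP}{0}$ on $[\tfrac12,1)^{\varAlen}$ via log-convexity of $p \mapsto p/(1-p)$, place a maximizer at an extreme point, and then apply \cref{thm:main} together with the hypothesis $\pendOf{(\text{ext}(C))} \subseteq \text{ext}(C)$. The only cosmetic difference is that you justify convexity of each product directly as $\exp$ of a sum of convex coordinate maps, whereas the paper changes variables to $a = \varP - \tfrac12$ and cites a product-of-log-convex-factors theorem; your observation about the implicit compactness of $C$ is a fair point that the paper also leaves unstated.
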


In other words, if the pendulum arrangement is always an element of the extreme points of $C$ then the optimal solution to Problem~\eqref{eq: control problem} is an extreme point of $C$ which is ordered according to the pendulum arrangement. 
The proof of \cref{prop: extreme points} is provided in \cref{appendix: proof of prop extreme points} and uses the fact that $\expectedEscape{\varP}{0}$ is convex in $\varP$. This implies that there exists $\varP^* \in \text{ext}(C)$ and thus applying \cref{thm:main} (Main Result) with the assumed structure of $C$ concludes the proof. This result allows us to greatly reduce the search for an optimal solution. Particularly, it may reduce this search to a small constant number of possible candidates, as shown by the following example.

\paragraph{Example:} Assume a linear budget constraint of the form
\begin{equation*}
    C_{a, b}
    =
    \brk[c]*{\varP \in \brk[s]{0, a}^{\varAlen}
    ~\Big|~
    \norm{\varP}_1 \leq b},
\end{equation*}
where $a \in [0, 1)$.
Trivially, whenever $b \geq \varAlen a$ the optimal solution is given by the uniform vector ${\varP^* = [a, \hdots, a]}$. Yet, when $b < \varAlen a$, by \cref{prop: extreme points}, the optimal solution will be given by a pendulum arrangement over $\text{ext}(C_{a,b})$. This results in $b / a$ values of $a$ (up to a remainder term) placed in the center of the line. Concretely, $\varP^* = \pendOf{\varP}$ with
\begin{equation*}
    \varP = {\brk{\underbrace{a, \hdots, a}_{\floor{\frac{b}{a}}~\text{times}}, \text{mod}(b, a), 0, \hdots, 0}}.
\end{equation*}

\begin{figure}[t!]
\includegraphics[width=0.46\textwidth]{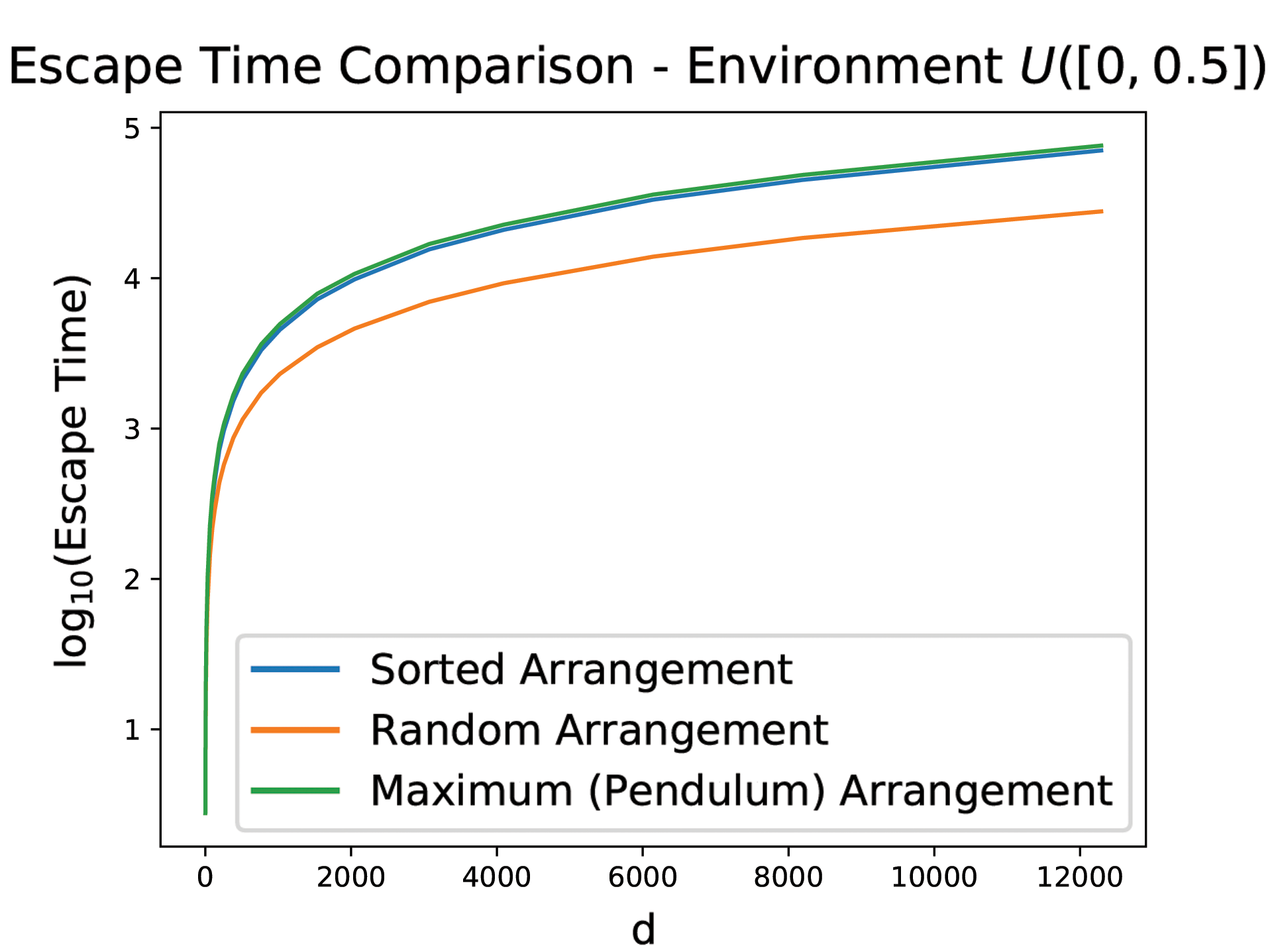}
\hspace{1cm}
\includegraphics[width=0.46\textwidth]{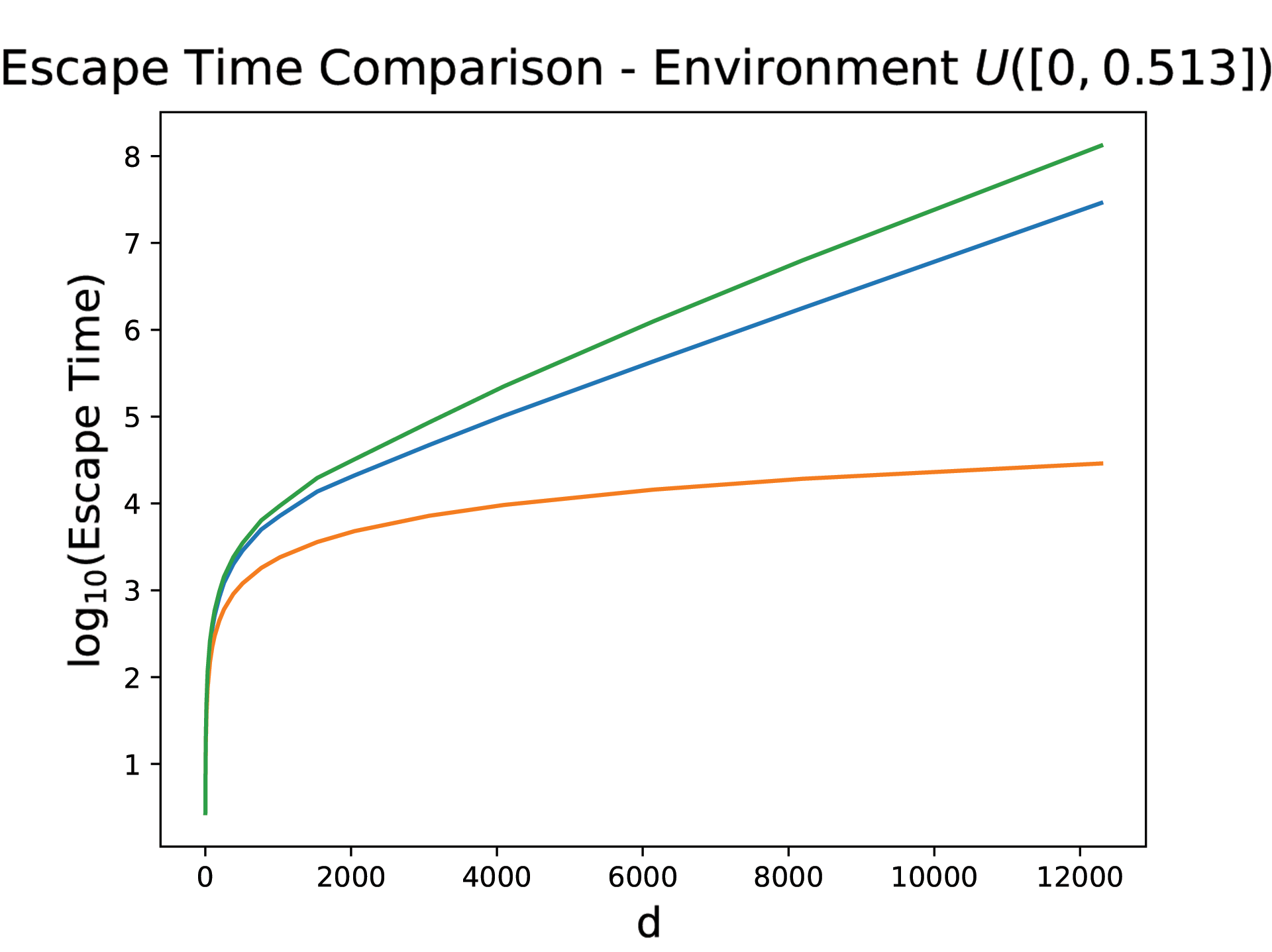}
\centering
\caption{\label{fig: d-comparison}
Escape time comparison of the maximal, sorted, and random arrangements as a function of the number of nodes $\varAlen$. (left) environment weights distributed $U([0, 0.513])$. (right) environment weights distributed $U([0, 0.5])$. Graphs display average result over 1000 environment instantiations where for each instantiation the random arrangement is calculated by averaging 1000 random permutations. }
\end{figure}

\subsection{Random Environments}
\cref{thm:main} (Main Result) shows that the pendulum arrangement yields the maximum expected escape time. In this section we perform several numerical experiments to give a more quantitative grasp of the behavior of the expected escape time under different arrangements: maximal (pendulum), minimal, sorted, and random. The minimal arrangement is the one that yields minimal expected escape time, and is found using exhaustive search.
The sorted arrangement refers to sorting the weights (transition probabilities) in ascending order.
The random arrangement refers to a random (uniform) arrangement of the given weights. For small values of $\varAlen$ this can be calculated exactly by averaging over all possible arrangements. When this becomes computationally infeasible, we use Monte-Carlo methods to estimate this quantity.

Our first experiment compares the maximum, random, and minimum arrangements. To do so, we consider a random walk in a random environment setting on a line with $\varAlen=8$ nodes. We initialize the environment weights using a uniform distribution on $\brk[s]{0.5-x, 0.5+x}$ and perform a Monte-Carlo simulation (only on the initialization) to evaluate the expected escape time of each arrangement. The results are depicted in \cref{fig: min-max comparison}. Our choice of distribution keeps the expected value of the weights fixed while varying their variance. Unsurprisingly, the arrangement of the weights becomes more significant for higher variance weight initialization. Notice that the graph displays the logarithm of the escape time, and thus the increasing gaps between the arrangements imply a highly super-linear dependence on the variance.

Our second experiment examines the behavior of the escape time as a function of $\varAlen$ for the maximal, sorted and random arrangements (see \cref{fig: d-comparison}). We observe two types of behaviors depending on the properties of the random environment. The first behavior occurs when all weights are smaller than $0.5$, and yields a walk that is, in a sense, ``strongly" transient, making the escape time grow slowly (linearly) in $\varAlen$ regardless of the arrangement. While there is a significant gain in using the maximal (pendulum) and sorted arrangements, which perform similarly here, the overall behavior of the escape time does not change compared to a random arrangement. The second case reveals an interesting phase transition. It considers a case where the random arrangement is transient but some proportion of the weights are greater than $0.5$. In this case the random arrangement behaves as in the first environment (up to small factors). However, starting at some $\varAlen_0$, the maximal and sorted arrangements grow exponentially, with a significant gap between them. We have tried various environment parameters and this behavior seems to persist with the only change being the critical value of $\varAlen_0$ where the change in behavior occurs. We leave the formal investigation of this phenomenon to future work.

\subsection{Alternate Statistics}
In this work we focused on the maximization of the expected escape time. While maximizing the expected value is a highly accepted notion, one could also consider other criteria that, for example, consider some notion of risk.
For instance, one might wish to find a permutation for which
$
f(\mathbb{E} \escapeTimeOf{\permSig\varP}{0}, \text{var}(\escapeTimeOf{\varP \permSig}{0}))
$
is maximized. Some classical examples include the Sharpe Ratio ${f(x, y) = \frac{x}{\sqrt{y}}}$, and Mean-Variance criterion ${f(x,y) = x - \lambda y}$.

An alternative notion that is of separate interest is minimizing the expected escape time.  This problem was studied in a simplified setting where weights are constrained to one of two values, showing that the asymptotic optimal order requires equal spacing between the larger weights \cite{procaccia2012need,lampert2013maximizing}.
In \cref{fig: minimal arrangement} we depict three instantiations of general weight assignments for a line of $\varAlen = 6$ nodes. Contrary to the maximal expected escape time, the minimal optimal permutation is \textit{value dependent}, suggesting that understanding the structure of the minimal permutation is more involved. Extensive simulations lead us to the conjecture that ``large" values are indeed spaced more or less evenly, but it remains unclear how to characterize this notion formally. We leave the topic of alternate statistics as an open question for future work.

\begin{figure}[t!]
\includegraphics[width=0.9\textwidth]{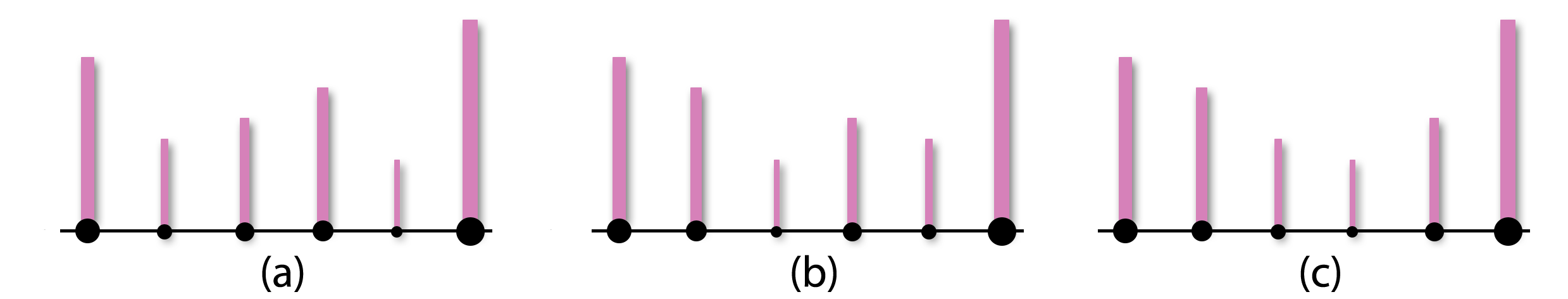}
\centering
\caption{\label{fig: minimal arrangement}
Minimal arrangements of three vectors: (a) $[0.1, 0.2, 0.3, 0.4, 0.5, 0.6]$, (b) $[0.3, 0.4, 0.5, 0.6, 0.7, 0.8]$, and (c) $[0.4, 0.5, 0.6, 0.7, 0.8, 0.9]$. The arrangements are all unique to their values, suggesting that the minimal arrangement depends on the values of $\varP$. Note that the arrangement in (c) is not the inverted pendulum arrangement, as the two largest values in the edges are flipped. }
\end{figure}

\subsection{Conclusion}

In this work we conducted \textit{exact} analysis of a newly discovered phenomenon of heterogeneous random walks. We showed that the maximum escape time is established when the transition probabilities relating to the slowdown drift of the process are ordered in a unique arrangement, known as the pendulum arrangement (see \cref{fig: pendulum arrangement}). Our result follows careful inspection of a sum over products combinatorial optimization problem, which may be of broader interest in fields out of the scope of this paper. 

Finally, our work lays the foundations for Markov chain Design, through careful design of the topology and weights of Markov chains. This may enable the construction of networks that are insusceptible to cyber-attacks, resilient to the spread of infectious diseases, and control the flow of perilous processes (e.g., harmful ideas) on social networks and the web.

\bibliography{bibliography}

\newpage
\appendix

\section*{\centering Appendix: Missing Proofs}

\section{Proof of \cref{proposition:escapeTimeFormula}}
\label{sec:proofOfEscapeFormula}

The proof follows standard induction analysis, (see e.g., Proposition~2 of \cite{barrera2009abrupt}), and is provided here for completeness. For $1 \leq k \leq \varAlen$, due to the Markov property,
\begin{align*}
    \expectedEscape{\varP}{k} 
    = 
    \varPof{k}
    \expectedEscape{\varP}{k-1} 
    +
    (1-\varPof{k})
    \expectedEscape{\varP}{k+1}
    +
    1.
\end{align*}
Rearranging the above yields
\begin{align*}
    \expectedEscape{\varP}{k} - \expectedEscape{\varP}{k+1}
    =
    \frac{\varPof{k}}{1 - \varPof{k}} \brk*{\expectedEscape{\varP}{k-1} - \expectedEscape{\varP}{k}} + \frac{1}{1 - \varPof{k}}.
\end{align*}
Denoting $D_k = \expectedEscape{\varP}{k} - \expectedEscape{\varP}{k+1}$ we get
\begin{align*}
    D_k = \frac{\varPof{k}}{1 - \varPof{k}} D_{k-1} + \frac{1}{1 - \varPof{k}}.
\end{align*}
Solving this equation by iteration yields
\begin{align*}
    D_k
    &=
    \frac{1}{1 - \varPof{k}}
    +
    D_0 \prod_{i=1}^k \frac{\varPof{i}}{1 - \varPof{i}} 
    +
    \sum_{m=1}^{k-1} \frac{1}{1 - \varPof{m}} \prod_{i=m+1}^k \frac{\varPof{i}}{1 - \varPof{i}}.
\end{align*}
Furthermore we have that
\begin{align*}
    &\escapeTimeOf{\varP}{\varAlen+1} = 0\\
    &\escapeTimeOf{\varP}{0} = \escapeTimeOf{\varP}{1} + 1 \Rightarrow D_0 = 1.
\end{align*}
Then, combining the above we get that
\begin{align*}
    \expectedEscape{\varP}{0}
    &=
    1 + \expectedEscape{\varP}{1} \\
    &=
    1
    +
    \sum_{k=1}^{\varAlen}
    \brk*{
    \expectedEscape{\varP}{k} - \expectedEscape{\varP}{k+1}
    } \\
    &=
    1
    +
    \sum_{k=1}^{\varAlen}
    \brk*{
     \frac{1}{1 - \varPof{k}}
    +
    \prod_{i=1}^k \frac{\varPof{i}}{1 - \varPof{i}} 
    +
    \sum_{m=1}^{k-1} \frac{1}{1 - \varPof{m}} \prod_{i=m+1}^k \frac{\varPof{i}}{1 - \varPof{i}}
    }.
\end{align*}
Finally Lemma~\ref{lemma: technical expression derivation} below shows how the final expression can be technically derived from the above, using simple algebraic manipulations.

\begin{lemma}
    \label{lemma: technical expression derivation}
    It holds that
    \begin{align*}
        \sum_{k=1}^{\varAlen}
        \brk*{
         \frac{1}{1 - \varPof{k}}
        +
        \prod_{i=1}^k \frac{\varPof{i}}{1 - \varPof{i}} 
        +
        \sum_{m=1}^{k-1} \frac{1}{1 - \varPof{m}} \prod_{i=m+1}^k \frac{\varPof{i}}{1 - \varPof{i}}
        }
        =
        d
        +
        2
        \sum_{m=1}^{\varAlen}
        \sum_{i=1}^{\varAlen - m + 1} 
        \prod_{j=i}^{i + m - 1} \frac{\varPof{j}}{1-\varPof{j}}
    \end{align*}
\end{lemma}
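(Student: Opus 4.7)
The plan is to reduce everything to the ratios $r_i := \varPof{i}/(1-\varPof{i})$ and use the trivial but crucial identity $1/(1-\varPof{i}) = 1 + r_i$. With this substitution, the three kinds of terms on the LHS become sums of products of consecutive $r_i$'s, and the goal becomes a combinatorial re-indexing.

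Concretely, I would first fix $k$ and show that the summand in the outer sum equals $1 + 2\sum_{m=1}^{k}\prod_{i=m}^{k}r_i$. To do this, write $1/(1-\varPof{m}) = 1 + r_m$, so
\begin{equation*}
    (1+r_m)\prod_{i=m+1}^{k}r_i \;=\; \prod_{i=m+1}^{k}r_i + \prod_{i=m}^{k}r_i.
\end{equation*}
Denoting $P_{m,k} := \prod_{i=m}^{k}r_i$ (with $P_{k+1,k} := 1$), the full $k$-summand becomes
\begin{equation*}
    \bigl(P_{k+1,k}+P_{k,k}\bigr) + P_{1,k} + \sum_{m=1}^{k-1}P_{m+1,k} + \sum_{m=1}^{k-1}P_{m,k}.
\end{equation*}
Shifting the index in the first inner sum ($m \mapsto m-1$) and collecting, the boundary terms $P_{1,k}$ and $P_{k,k}$ cancel against the telescoped pieces, leaving exactly $1 + 2\sum_{m=1}^{k}P_{m,k}$, as desired.

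It then remains to re-index the double sum. Summing over $k$ gives $d + 2\sum_{k=1}^{d}\sum_{m=1}^{k}\prod_{i=m}^{k}r_i$. Substituting window length $n = k-m+1$ and starting index $i = m$, the constraint $1 \le m \le k \le d$ becomes $1 \le n \le d$ and $1 \le i \le d-n+1$, which yields
\begin{equation*}
    d + 2\sum_{n=1}^{d}\sum_{i=1}^{d-n+1}\prod_{j=i}^{i+n-1}r_j,
\end{equation*}
matching the RHS after renaming $n$ to $m$ and recalling $r_j = \varPof{j}/(1-\varPof{j})$.

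There is no real obstacle here — the whole statement is an algebraic identity — but the one step that requires care is the bookkeeping of the boundary terms when rewriting the $k$-summand: one must verify that the extra $P_{1,k}$ and $P_{k,k}$ terms (arising from $\prod_{i=1}^{k}r_i$ and $1+r_k$, respectively) combine correctly with the shift $\sum_{m=1}^{k-1}P_{m+1,k} = \sum_{m=2}^{k}P_{m,k}$ to produce a clean factor of $2$ on $\sum_{m=1}^{k}P_{m,k}$ and a leftover constant $P_{k+1,k}=1$ that supplies the additive $d$ on the RHS.
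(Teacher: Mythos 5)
Your proof is correct. Both your argument and the paper's rest on the same elementary identity — writing $\frac{1}{1-\varPof{m}} = 1 + \frac{\varPof{m}}{1-\varPof{m}}$ (in the paper's notation, $1-\varPof{m}+\varPof{m}=1$, used to merge $M_{m+1,k}+M_{m,k}$ into a single product) — but you diverge afterwards. The paper first passes through auxiliary product functions $G_{x,y}$ and $M_{x,y}$ to normalize everything into the windowed ratios $W_{x,y}$, and then establishes the final identity
$\sum_{k}\bigl(W_{1,k}+\sum_{m=1}^{k-1}(W_{m+1,k}+W_{m,k})\bigr) = \varAlen + 2\sum_m\sum_i W_{i,i+m-1}$
by induction on $\varAlen$, peeling off the $k=\varAlen$ term at each step. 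You instead work directly with $r_i = \varPof{i}/(1-\varPof{i})$ from the outset, collapse each fixed-$k$ summand to the closed form $1+2\sum_{m=1}^{k}P_{m,k}$ by absorbing the two boundary products $P_{1,k}$ and $P_{k,k}$ into the shifted sums (your bookkeeping there checks out, including the empty-product convention $P_{k+1,k}=1$ that supplies the additive $1$ per $k$ and hence the $\varAlen$ on the right), and then finish with a direct change of variables $(m,k)\mapsto(i,n)=(m,\,k-m+1)$ on the index set $\{1\le m\le k\le \varAlen\}$. Your route buys a shorter, induction-free argument that makes the double sum on the right transparently a sum over all contiguous windows; the paper's route is heavier notationally but folds more naturally into the preceding derivation of the recursion for $D_k$, where the $G$ and $M$ products already appear.
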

\begin{proof}
For $z \in (0,1)^{\varAlen}, 1 \leq x \leq y \leq \varAlen$ denote
\begin{align*}
    G_{x,y}(z) = \prod_{i=x}^y z_i, \quad M_{xy}(z) = G_{1,x-1}(1-z)G_{x,y}(z)G_{y+1,\varAlen}(1-z).
\end{align*}
Recalling that $\varP\in (0,1)^{\varAlen}$ denotes the vector of probabilities $(\varPof{1}, \hdots \varPof{\varAlen})$, we have that
\begin{align*}
    \sum_{k=1}^{\varAlen}
        &\brk*{
         \frac{1}{1 - \varPof{k}}
        +
        \prod_{i=1}^k \frac{\varPof{i}}{1 - \varPof{i}} 
        +
        \sum_{m=1}^{k-1} \frac{1}{1 - \varPof{m}} \prod_{i=m+1}^k \frac{\varPof{i}}{1 - \varPof{i}}
    }
    \\
    &=
    \sum_{k=1}^{\varAlen}
    \frac{1}{1-\varPof{k}}
    +
    \sum_{k=1}^{\varAlen}
    \frac{G_{1,k}(\varP)}{G_{1,k}(1-\varP)}
    +
    \sum_{k=1}^{\varAlen}
    \sum_{m=1}^{k-1}
    \frac
    {
        G_{m+1,k}(\varP)
    }
    {
        G_{m,k}(1-\varP)
    } \\
    &=
    \sum_{k=1}^{\varAlen}
    \frac{1}{1-\varPof{k}}
    +
    \frac{1}{G_{1,\varAlen}(1-\varP)}
    \sum_{k=1}^{\varAlen} 
    \brk*{M_{1,k}(\varP) + \sum_{m=1}^{k-1} \brk*{M_{m+1,k}(\varP) + M_{m,k}(\varP)}},
\end{align*}
where in the last two steps we use the definition of $G, M$ and the fact that
\begin{align*}
    &M_{m+1,k}(\varP) + M_{m,k}(\varP) \\
    &=
    G_{1,m}(1-\varP)G_{m+1,k}(\varP)G_{k+1,\varAlen}(1-\varP)
    +
    G_{1,m-1}(1-\varP)G_{m,k}(\varP)G_{k+1,\varAlen}(1-\varP) \\
    &=
    (1 - \varPof{m} + \varPof{m})G_{1,m-1}(1-\varP)G_{m+1,k}(\varP)G_{k+1,\varAlen}(1-\varP) \\
    &=
    G_{1,m-1}(1-\varP)G_{m+1,k}(\varP)G_{k+1,\varAlen}(1-\varP).
\end{align*}
Next, denote
\begin{equation*}
    W_{x,y}(z) 
    = 
    \prod_{i=x}^y\frac{\varPof{i}}{1-\varPof{i}},
\end{equation*}
and notice that
\begin{equation*}
    W_{x,y}(\varP) = \frac{M_{x,y}(\varP)}{G_{1,d}(1-\varP)}
\end{equation*}
Then, we have that
\begin{align*}
    \expectedEscape{\varP}{0}
    &=
    \sum_{k=1}^{\varAlen}
    \frac{1}{1-\varPof{k}}
    +
    \sum_{k=1}^{\varAlen} 
    \brk*{W_{1,k}(\varP) + \sum_{m=1}^{k-1} \brk*{W_{m+1,k}(\varP) + W_{m,k}(\varP)}} \\
    &=
    d
    +
    2
    \sum_{m=1}^{\varAlen} 
    \sum_{i=1}^{\varAlen - m + 1} 
    W_{i, i+m-1}(\varP),
\end{align*}
where the last step is proven by induction on $\varAlen$. Substituting for $W$ completes the proof.

\paragraph{Induction} 
We show that 
\begin{align}
\label{eq: induction}
    \sum_{k=1}^{\varAlen}
    \frac{1}{1-\varPof{k}}
    +
    \sum_{k=1}^{\varAlen} 
    \brk*{W_{1,k}(\varP) + \sum_{m=1}^{k-1} \brk*{W_{m+1,k}(\varP) + W_{m,k}(\varP)}} 
    = 
    d
    +
    2
    \sum_{m=1}^{\varAlen} 
    \sum_{i=1}^{\varAlen - m + 1} 
    W_{i, i+m-1}(\varP),
\end{align}
by induction on $\varAlen$. \\
\textbf{Base case: $\varAlen = 1$.} We have that
\begin{align*}
    \frac{1}{1-\varPof{1}} + W_{1,1}(\varP)
    =
    \frac{1-\varPof{1}+\varPof{1}}{1-\varPof{1}} + \frac{\varPof{1}}{1-\varPof{1}}
    =
    1 + 2 \frac{\varPof{1}}{1-\varPof{1}}
    =
    1 + 2W_{1,1}(\varP).
\end{align*}
\textbf{Induction step.}
Assume \cref{eq: induction} holds for some $\varAlen = n$. We will show it holds for $n+1$ as well. Indeed,
\begin{align*}
    &\sum_{k=1}^{n+1}
    \frac{1}{1-\varPof{k}}
    +
    \sum_{k=1}^{n+1} 
    \brk*{W_{1,k}(\varP) + \sum_{m=1}^{k-1} \brk*{W_{m+1,k}(\varP) + W_{m,k}(\varP)}} 
    \\
    &=
    \frac{1}{1-\varPof{n+1}}
    +
    W_{1,n+1}(\varP) + \sum_{m=1}^{n} \brk*{W_{m+1,n+1}(\varP) + W_{m,n+1}(\varP)} \\
    &~~+
    \sum_{k=1}^{n}
    \frac{1}{1-\varPof{k}} 
    +
    \sum_{k=1}^{n} 
    \brk*{W_{1,k}(\varP) + \sum_{m=1}^{k-1} \brk*{W_{m+1,k}(\varP) + W_{m,k}(\varP)}} \\
    &\underset{(a)}{=}
     \frac{1}{1-\varPof{n+1}}
    +
    W_{1,n+1}(\varP) + \sum_{m=1}^{n} \brk*{W_{m+1,n+1}(\varP) + W_{m,n+1}(\varP)}
    +
    n + 2\sum_{m=1}^{n} 
    \sum_{i=1}^{n - m + 1} 
    W_{i, i+m-1}(\varP) \\
    &\underset{(b)}{=}
    1 + W_{n+1,n+1}
    +
    W_{1,n+1}(\varP) + \sum_{m=1}^{n} \brk*{W_{m+1,n+1}(\varP) + W_{m,n+1}(\varP)}
    +
    n + 2\sum_{m=1}^{n} 
    \sum_{i=1}^{n - m + 1} 
    W_{i, i+m-1}(\varP) \\
    &\underset{(c)}{=}
    n+1
    +
    2\brk*{
     W_{1, n+1}(\varP)
    +
    \sum_{m=1}^{n}
     W_{n-m+2, n+1}(\varP)
     +
    \sum_{m=1}^{n}
    \sum_{i=1}^{n - m + 1} 
    W_{i, i+m-1}(\varP) }\\
    &=
    n + 1
    +
    2\sum_{m=1}^{n + 1}
    \brk*{
     W_{n-m+2, n+1}(\varP)
     +
    \sum_{i=1}^{n - m + 1} 
    W_{i, i+m-1}(\varP)} \\
    &=
    n + 1
    +
    2\sum_{m=1}^{n + 1} 
    \sum_{i=1}^{n - m + 2} 
    W_{i, i+m-1}(\varP).
\end{align*}
    In $(a)$ we used the induction step, in $(b)$ we used the fact that $\frac{1}{1-\varPof{n+1}} = \frac{1-\varPof{n+1} + \varPof{n+1}}{1-\varPof{n+1}} = 1 + W_{n+1,n+1}(\varPof{n+1})$, and in $(c)$ reorganization of the summands.

\end{proof}

\section{Proof of \cref{lemma:sort to pend}}
\label{sec:proofOfSortToPend}
\begin{proof}
    Recall that
    \begin{equation*}
        \permSortToPendOf{j} 
        =
        \begin{cases}
            2j-1 &, j \leq \frac{d+1}{2} \\
            2(d+1-j) &, \text{otherwise}.
        \end{cases}
    \end{equation*}
    It is easy to verify that the inverse of this permutation, i.e., $\permSortToPendInv$, has the following form
    \begin{equation*}
        \permSortToPendInvOf{j}
        =
        \begin{cases}
            \frac{j + 1}{2} &, j \text{ is odd} \\
            d + 1 - \frac{j}{2} &, j \text{ is even}.
        \end{cases}
    \end{equation*}
    Assume that $\sortOf{\varA} = \permSortToPendInv \pendOf{\varA}$ and so by the uniqueness of the sorted order and the permutation $\permSortToPendInv$ we conclude the uniqueness of $\pendOf{\varA}$. Since both sides are now uniquely defined, we can apply $\permSortToPend$ to both sides to obtain the other part of the lemma.
    
    We show that $\sortOf{\varA} = \permSortToPendInv \pendOf{\varA}$ thus concluding the proof.
    Let $y = \pendOf{\varA}$ and $z = \permSortToPendInv y$. Let $1 \le i \le \varAlen - 1$ be odd, then $\permSortToPendInvOf{i} = \brk*{i+1} / 2$ and $\permSortToPendInvOf{i+1} = \varAlen + 1 - \brk*{\brk*{i + 1} / 2}$, and so we have that
    \begin{align*}
        z_i
        =
        y_{\permSortToPendInvOf{i}}
        =
        y_{\brk*{i + 1} / 2}
        \le
        y_{\varAlen + 1 - \brk*{\brk*{i + 1} / 2}}
        =
        y_{\permSortToPendInvOf{i + 1}}
        =
        z_{i + 1},
    \end{align*}
    where the inequality used the first part of \cref{def:pendulum arrangement} (pendulum arrangement). Now, for let $2 \le i \le \varAlen - 1$ be even, then $\permSortToPendInvOf{i} = \varAlen + 1 - \brk*{i / 2}$ and $\permSortToPendInvOf{i + 1} = \brk*{i + 2} / 2$, and so we have that
    \begin{align*}
        z_i
        =
        y_{\permSortToPendInvOf{i}}
        =
        y_{\varAlen + 1 - \brk*{i / 2}}
        \le
        y_{\brk*{i + 2} / 2}
        =
        y_{\permSortToPendInvOf{i + 1}}
        =
        z_{i + 1},
    \end{align*}
    where the inequality used the second part of \cref{def:pendulum arrangement} (pendulum arrangement). Overall we conclude that $z_i \le z_{i + 1}$ for all $1 \le i \le \varAlen - 1$, i.e., $z = \sortOf{\varA}$, as desired.
\end{proof}

\section{Proof of \cref{lemma: impPerm}}
\label{sec:proofOfImpPermLemma}
The proof of \cref{lemma: impPerm} is an immediate corollary of the three following results.
To ease notation, we make the following definition. For $i \in \mathbb{Z}, x \in \RR[\varAlen]$, and $1 \le m \le \varAlen$ let
\begin{align} \label{eq:winDef}
    \winOf[m]{i}{\varA}
    =
    \begin{cases}
        \prod_{j=i}^{i+m-1} \varAof{j} &, 1 \le i \le \varAlen + 1 - m \\
        0 &, \text{ otherwise}
        ,
    \end{cases}
\end{align}
where the otherwise case serves to avoid some edge cases in what follows.
When $\varA$ is clear from context, we will only write $\win[m]{i}$.
The first result, whose proof may be found in \cref{sec:valueDecomp}, decomposes the value.
\begin{lemma}[Value decomposition] \label{lemma:valueDecomp}
    We have that
    \begin{align*}
        \valof
        =
        \sum_{i=1}^{\ceil{\frac{l-m}{2}}} \brk[s]*{\winOf[m]{i}{\varA} + \winOf[m]{(l + 2 - m) - i}{\varA}}
        +
        \sum_{i = l + 2 - m}^{\varAlen + 1 - m} \brk[s]*{\winOf[m]{i}{\varA}}
        +
        \winOf[m]{\frac{1}{2}(l + 2 - m)}{\varA}\indEvent{l-m \in 2 \mathbb{N}}
    \end{align*}
\end{lemma}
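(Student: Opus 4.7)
The plan is to directly verify that the claimed decomposition partitions the index set $\{1, \ldots, d+1-m\}$ appearing in the definition $\valof = \sum_{i=1}^{d+1-m} \winOf[m]{i}{\varA}$. The motivation behind the specific grouping is that the improving permutation $\permImp[l]$ acts via the reflection $j \mapsto l+1-j$ on $\{1, \ldots, l\}$, which induces a pairing of windows around the symmetry axis $(l+2-m)/2$: if a size-$m$ window starts at position $i \le l+1-m$, then the window containing its reflected positions starts at $(l+2-m)-i$. Decomposing $\valof$ accordingly separates the windows that can be rearranged by $\permImp[l]$ (the paired terms) from the tail past position $l$, which $\permImp[l]$ leaves untouched; this is precisely the structure needed for the proof sketch of \cref{lemma: impPerm}.

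Concretely, I would first split the outer sum at $i = l+2-m$. The range $l+2-m \le i \le d+1-m$ immediately yields the tail term. For the range $1 \le i \le l+1-m$ I apply the involution $i \mapsto (l+2-m)-i$ on $\{1, \ldots, l+1-m\}$. This involution has a fixed point exactly when $l+2-m$ is even, i.e., when $l-m$ is even, and the fixed point is $i = (l+2-m)/2$, matching the indicator term.

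Two parity cases then finish the argument. When $l-m = 2k$ is even, the fixed point at $i = k+1$ produces the indicator contribution, and the remaining $2k$ indices split into $k = \lceil (l-m)/2 \rceil$ symmetric pairs $\{i,\,(l+2-m)-i\}$ for $1 \le i \le k$. When $l-m = 2k+1$ is odd, there is no fixed point and the $2k+2$ indices split into $k+1 = \lceil (l-m)/2 \rceil$ such pairs, with the indicator vanishing. In both cases one checks directly that the paired index sets $\{1, \ldots, \lceil (l-m)/2 \rceil\}$ and $\{(l+2-m) - \lceil (l-m)/2 \rceil, \ldots, l+1-m\}$ are disjoint and, together with the possible fixed point, exhaust $\{1, \ldots, l+1-m\}$.

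Finally, I would address the degenerate regimes, namely $l < m$ (where the first sum and indicator are empty), $l = m$ (where the first sum is empty but the fixed point at $i=1$ supplies $\winOf[m]{1}{\varA}$), and the tail range being empty when $l = d$. Thanks to the convention in \cref{eq:winDef} that $\winOf[m]{i}{\varA} = 0$ for $i \notin [1, d+1-m]$, any would-be out-of-range summand simply contributes zero, so no separate handling is required. The argument is pure index bookkeeping and uses no algebraic identity beyond the definition of $\win$; the only real care needed is aligning the parity cases with the empty-sum and out-of-range conventions so that the three pieces add up to exactly $\sum_{i=1}^{d+1-m} \winOf[m]{i}{\varA}$.
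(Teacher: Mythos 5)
Your proposal is correct and follows essentially the same route as the paper: split the outer sum at $i = l+2-m$, leave the tail alone, and re-index the head via the reflection $i \mapsto (l+2-m)-i$, with the indicator term arising exactly from the fixed point of that involution when $l-m$ is even (the paper phrases this as a change of variables on the second half of the head together with the identity $(l+1-m) - \ceil{\frac{l-m}{2}} = \ceil{\frac{l-m}{2}} + \indEvent{l-m \in 2\mathbb{N}}$, which is the same parity bookkeeping you carry out case by case). Your explicit handling of the degenerate regimes via the out-of-range convention in \cref{eq:winDef} is a welcome addition that the paper leaves implicit.
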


The second result, whose proof may be found in \cref{sec:improvingWindowPairs}, shows that the terms in the first sum of the decomposition, as well as the last term, increase as a result of applying $\permImp$.
\begin{lemma}[Improving Window Pairs] \label{lemma:improvingWindows}
    For all $m > 0$ and $1 \le i \le \frac{1}{2}\brk*{l + 2 - m}$, we have that
    \begin{equation*}
        \winOf[m]{i}{\varA} + \winOf[m]{(l + 2 - m) - i}{\varA}
        \le 
        \winOf[m]{i}{\permImp\varA} + \winOf[m]{(l + 2 - m) - i}{\permImp\varA}
        .
    \end{equation*}
    Moreover, if $l=\varAlen$ and $\permImp[\varAlen]\varA \notin \brk[c]*{x, \permInv \varA}$ then there exist $i, m$ such that the inequality is strict.
\end{lemma}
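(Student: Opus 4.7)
The plan is to exploit the reflection symmetry of the two window index sets. Write $A_i := \winOf[m]{i}{\varA}$ and $B_i := \winOf[m]{(l+2-m)-i}{\varA}$, with underlying index sets $I_A = \{i,\ldots,i+m-1\}$ and $I_B = \{(l+2-m)-i,\ldots,l+1-i\}$. A direct check from the index formulas gives $I_B = \{l+1-j : j \in I_A\}$, i.e., the two index sets are reflections of one another under the involution $j \mapsto l+1-j$, which is precisely the pairing along which $\permImp$ acts. Since $\permImp$ only permutes within each reflection pair $\{j, l+1-j\}$, and the pair product $\min \cdot \max = \varA_j \varA_{l+1-j}$ is preserved, this yields the key invariant $A_i(\permImp\varA) \cdot B_i(\permImp\varA) = A_i \cdot B_i$.

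Next, I would establish $B_i(\permImp\varA) \ge \max\{A_i, B_i\}$. By definition $\permImp$ places $\max\{\varA_k, \varA_{l+1-k}\}$ at each right-half position $k > l/2$; for each such $k \in I_B$ this factor dominates both the corresponding factor of $B_i = \prod_k \varA_k$ and (via the reflection bijection between $I_A$ and $I_B$) the corresponding factor of $A_i = \prod_k \varA_{l+1-k}$. Multiplying over $k \in I_B$ gives the claimed dominance, and combined with the product invariance this forces $A_i(\permImp\varA) \le \min\{A_i, B_i\}$. The inequality then follows from the elementary fact that for positive reals with a fixed product the sum is monotone non-decreasing in the maximum---equivalently, stretching the longer side of a fixed-area rectangle increases its perimeter. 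The main obstacle is the straddling case where $I_B$ contains some indices $k \le l/2$ (possible when $i$ is close to $(l-m)/2 + 1$); these left-half indices receive $\min$ rather than $\max$ under $\permImp$, but each such $k$ is paired inside $I_B$ with its own reflection $l+1-k$, and the resulting pair contribution $\min \cdot \max = \varA_k \varA_{l+1-k}$ matches the original, so the dominance argument survives on the remaining (strictly right-half) indices.

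For the moreover claim, suppose $l = \varAlen$ and $\permImp[\varAlen]\varA \notin \{\varA,\, \permInv\varA\}$. Then there must exist both a strictly right-heavy pair (some $j_1 \le \varAlen/2$ with $\varA_{j_1} < \varA_{\varAlen+1-j_1}$, which $\permImp$ leaves fixed) and a strictly left-heavy pair (some $j_2 \le \varAlen/2$ with $\varA_{j_2} > \varA_{\varAlen+1-j_2}$, which $\permImp$ swaps); otherwise all strict comparisons would align and $\permImp[\varAlen]\varA$ would equal either $\varA$ or $\permInv\varA$. Writing $j_r^\star := \varAlen + 1 - j_r$ for the right-half position of each pair and assuming WLOG $j_1^\star < j_2^\star$, I would choose $m = j_2^\star - j_1^\star + 1$ and $i = \varAlen + 1 - j_2^\star$. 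One checks $i \le (\varAlen + 2 - m)/2$ and that $I_B = \{j_1^\star, \ldots, j_2^\star\}$ lies entirely in the right half. At $k = j_1^\star$ the max-factor $\varA_{j_1^\star}$ strictly exceeds the corresponding $A_i$-factor $\varA_{j_1}$, and at $k = j_2^\star$ the max-factor $\varA_{j_2}$ strictly exceeds the corresponding $B_i$-factor $\varA_{j_2^\star}$. Hence $B_i(\permImp\varA) > \max\{A_i, B_i\}$ strictly, and the strict version of the rectangle monotonicity completes the proof.
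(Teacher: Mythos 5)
Your proposal is correct and follows essentially the same route as the paper: the product invariance $A_iB_i$ under $\permImp$ (the paper's permutation-invariant window pairs lemma), the domination $B_i(\permImp\varA)\ge\max\{A_i,B_i\}$ proved by splitting the straddling window into a reflection-symmetric middle block plus a purely right-half part (the paper's $m_0$/$m_1$ decomposition combined with its disjoint-windows lemma), and the fixed-area rectangle/perimeter inequality to conclude. Your strictness construction (extracting one strictly left-heavy and one strictly right-heavy pair and choosing $m=j_2^\star-j_1^\star+1$, $i=\varAlen+1-j_2^\star$) coincides with the paper's argument as well.
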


The third and final result, whose proof may be found in \cref{sec:proofOfImprovingSingleWindows}, shows that the terms in the second sum of the decomposition increase as a result of applying $\permImp$.
\begin{lemma}[Improving Single Windows] \label{lemma:ImprovingSingleWindows}
    For all $m > 0$ and $\brk*{l + 2 - m} \le i \le \varAlen + 1 - m$, we have that
    \begin{align*}
        \winOf[m]{i}{\varA}
        \le
        \winOf[m]{i}{\permImp\varA}
        .
    \end{align*}
    Moreover, if $l \le \varAlen - 1$ and $\permImp\varA \neq \varA$ then there exist $i, m$ such that the inequality is strict.
\end{lemma}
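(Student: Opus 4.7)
The plan is to analyze how $\permImp$ transforms each factor of $\winOf[m]{i}{\varA} = \prod_{j=i}^{i+m-1}\varAof{j}$ by partitioning the window positions $W = \{i, i+1, \ldots, i+m-1\}$ according to how the swap structure of $\permImp$ interacts with $W$. Recall that $\permImp$ is the identity on $\{l+1, \ldots, \varAlen\}$, and on $\{1, \ldots, l\}$ it pairs each $j$ with $l+1-j$ and places the larger of $\varAof{j}$ and $\varAof{l+1-j}$ at whichever index exceeds $(l+1)/2$. In particular, $(\permImp\varA)_j = \max(\varAof{j}, \varAof{l+1-j})$ for $l/2 < j \le l$, and $(\permImp\varA)_j = \min(\varAof{j}, \varAof{l+1-j})$ for $j \le l/2$.

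The first step is to classify each $j \in W$ as one of: (A) $j > l$ (untouched by $\permImp$); (B) $j \le l$ with $l+1-j \in W$ (``pair-interior''); or (C) $j \le l$ with $l+1-j \notin W$ (``pair-boundary''). Type (A) factors are unchanged. For Type (B), each pair $\{j, l+1-j\} \subseteq W$ has invariant product $\varAof{j}\varAof{l+1-j}$ under $\permImp$ (which remains true in the edge case $j = (l+1)/2$, since $\permImp$ fixes $j$ there). The crux is the claim that every Type (C) index satisfies $j > l/2$, which gives $(\permImp\varA)_j = \max(\varAof{j}, \varAof{l+1-j}) \ge \varAof{j}$; multiplying over $W$ then yields the desired inequality $\winOf[m]{i}{\varA} \le \winOf[m]{i}{\permImp\varA}$.

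The Type (C) claim is the main obstacle, and is where the hypothesis $i \ge l+2-m$ plays its essential role. If $j$ is Type (C) then either $l+1-j < i$ or $l+1-j > i+m-1$. The latter, combined with $j \ge i$, yields $2i < l+2-m$, which contradicts $i \ge l+2-m$ whether or not $l+2-m$ is positive (in the nonnegative case, $2i \ge 2(l+2-m) \ge l+2-m$; in the negative case, $2i < l+2-m < 0$ contradicts $i \ge 1$). Hence $l+1-j < i$, i.e., $j \ge l+2-i$ by integrality, and combined with $j \ge i$ this gives $2j \ge l+2$, so $j \ge l/2 + 1 > l/2$.

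For the ``moreover'' strict inequality, assume $l \le \varAlen-1$ and $\permImp\varA \neq \varA$. Then some pair $(j_0, l+1-j_0)$ with $j_0 \le l/2$ satisfies $\varAof{j_0} > \varAof{l+1-j_0}$, i.e., $\permImp$ actually swaps this pair. Setting $m = \varAlen - j_0$ and $i = j_0 + 1$, the condition $i \ge l+2-m$ rewrites as $\varAlen \ge l+1$, which is exactly the hypothesis, and $i \le \varAlen+1-m$ holds with equality. The resulting window $\{j_0+1, \ldots, \varAlen\}$ contains the large-side partner $l+1-j_0$ as a Type (C) position but excludes $j_0$ itself, so the factor at $l+1-j_0$ strictly increases from $\varAof{l+1-j_0}$ to $\varAof{j_0}$. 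Since all remaining factors in this window are non-decreasing by the main inequality, the product $\winOf[m]{i}{\permImp\varA}$ is strictly larger than $\winOf[m]{i}{\varA}$.
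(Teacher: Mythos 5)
Your proof is correct and follows essentially the same decomposition as the paper's: the hypothesis $i \ge l+2-m$ forces the window to split into a symmetric block whose product is invariant under $\permImp$ (your type-B pairs; the paper invokes its separate invariant-window-pairs lemma for this block) together with indices strictly right of the axis $(l+1)/2$ whose factors only increase (your type C, the paper's second case), and the strictness witness is chosen in the same way. Two trivially fixable blemishes: you should dispose of the degenerate case $i<1$ separately (there both windows equal $0$ by definition, and your ``contradicts $i\ge 1$'' step tacitly assumes $i\ge 1$), and the final sentence's claim that \emph{each} remaining factor is non-decreasing is not literally true—type-B factors left of the axis can individually decrease—though the pairwise product invariance you already established is all the argument needs.
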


\begin{proof}[of \cref{lemma: impPerm}]
    Combining \cref{lemma:valueDecomp,lemma:improvingWindows,lemma:ImprovingSingleWindows} we get that
    \begin{align*}
        &\valof
        \\
        &=
        \sum_{i=1}^{\ceil{\frac{l-m}{2}}} \brk[s]*{\winOf[m]{i}{\varA} + \winOf[m]{(l + 2 - m) - i}{\varA}}
        +
        \sum_{i = l + 2 - m}^{\varAlen + 1 - m} \brk[s]*{\winOf[m]{i}{\varA}}
        +
        \winOf[m]{\frac{1}{2}(l + 2 - m)}{\varA}\indEvent{l-m \in 2 \mathbb{N}}
        \\
        &\le
        \sum_{i=1}^{\ceil{\frac{l-m}{2}}} \brk[s]*{\winOf[m]{i}{\permImp\varA} + \winOf[m]{(l + 2 - m) - i}{\permImp\varA}}
        +
        \sum_{i = l + 2 - m}^{\varAlen + 1 - m} \brk[s]*{\winOf[m]{i}{\permImp\varA}}
        +
        \winOf[m]{\frac{1}{2}(l + 2 - m)}{\permImp\varA}\indEvent{l-m \in 2 \mathbb{N}}
        \\
        &=
        \valof[\permImp\varA]
        .
    \end{align*}
    The strict inequality condition follows by combining those of \cref{lemma:improvingWindows,lemma:ImprovingSingleWindows}.
\end{proof}

\subsection{Proof of \cref{lemma:valueDecomp}}
\label{sec:valueDecomp}

\begin{proof}
    We have that
    \begin{align*}
        \valof[\varA] 
        &= 
        \sum_{i=1}^{\varAlen - m + 1} \prod_{j=i}^{i + m - 1} \varAof{j} \\
        &=
        \sum_{i=1}^{\varAlen - m + 1} \winOf[m]{i}{\varA} \\
        &=
        \sum_{i=1}^{\ceil{\frac{l-m}{2}}} \winOf[m]{i}{\varA} 
        +
        \sum_{i=\ceil{\frac{l-m}{2}} + 1}^{l + 1 - m} \winOf[m]{i}{\varA}
        +
        \sum_{i=l + 2 - m}^{\varAlen - m + 1} \winOf[m]{i}{\varA} \\
        &=
        \sum_{i=1}^{\ceil{\frac{l-m}{2}}} \winOf[m]{i}{\varA} 
        +
        \sum_{i=1}^{\floor{\frac{l-m}{2}} + 1} \winOf[m]{(l + 2 - m) - i}{\varA}
        +
        \sum_{i=l + 2 - m}^{\varAlen - m + 1} \winOf[m]{i}{\varA} \\
        &=
        \sum_{i=1}^{\ceil{\frac{l-m}{2}}} 
        \winOf[m]{i}{\varA} + \winOf[m]{(l + 2 - m) - i}{\varA}
        +
        \sum_{i=l + 2 - m}^{\varAlen - m + 1} \winOf[m]{i}{\varA}
        +
        \winOf[m]{\frac{1}{2}(l + 2 - m)}{\varA}\indEvent{l-m \in 2 \mathbb{N}},
    \end{align*}
    where the last two transitions used the change of variables $i = (l + 2 - m) - j$ and the fact that
    \begin{equation*}
        (l + 1 - m) - \ceil{\frac{l - m}{2}} = \floor{\frac{l - m}{2} + 1} = \ceil{\frac{l-m}{2}} + \indEvent{l-m \in 2 \mathbb{N}}.
    \end{equation*}
\end{proof}

\subsection{Proof of \cref{lemma:improvingWindows}}
\label{sec:improvingWindowPairs}

To prove this lemma, we need a few intermediate results. The first is a simple and well known claim, whose geometric interpretation is that for equal area rectangles, the one with the longest side has a larger circumference. See proof in \cref{sec:proofOfaux1}.
\begin{lemma}
\label{lemma:aux1}
	let $x_1, x_2, y_1, y_2 \ge 0$ such that $x_1 x_2 = y_1 y_2$ then if $\max \brk[c]*{y_1, y_2} < \max \brk[c]*{x_1, x_2}$ then
	\begin{equation*}
	    y_1 + y_2 < x_1 + x_2.
	\end{equation*}
\end{lemma}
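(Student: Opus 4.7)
The plan is to reduce this to a one-dimensional monotonicity statement. Without loss of generality (swapping within each pair preserves both the product and the maximum) assume $x_1 \ge x_2$ and $y_1 \ge y_2$, and set $p \coloneqq x_1 x_2 = y_1 y_2$. I first dispose of the degenerate case $p = 0$: then $x_2 = 0$ (since $x_1 \ge x_2$) and either $y_1 = 0$ or $y_2 = 0$, so $y_1 + y_2 = \max(y_1, y_2) < \max(x_1, x_2) = x_1 + x_2$.

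For $p > 0$, the key observation is that the larger element of any nonnegative pair with product $p$ is at least $\sqrt{p}$, so both $x_1$ and $y_1$ lie in $[\sqrt{p}, \infty)$. On this interval the function $f(t) = t + p/t$ satisfies $f'(t) = 1 - p/t^2 \ge 0$, with strict inequality for $t > \sqrt{p}$, so $f$ is strictly increasing on $[\sqrt{p}, \infty)$. Since $y_1 < x_1$ (with $x_1 > \sqrt p$, as otherwise $y_1 < \sqrt p$, contradicting $y_1 \ge \sqrt p$), we conclude $f(y_1) < f(x_1)$, i.e.
\[
    y_1 + y_2 \;=\; y_1 + \frac{p}{y_1} \;=\; f(y_1) \;<\; f(x_1) \;=\; x_1 + \frac{p}{x_1} \;=\; x_1 + x_2,
\]
which is the claim.

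There is essentially no obstacle: the only care required is in the WLOG step (justified by symmetry of the hypothesis under swaps within a pair) and in handling $p = 0$ separately so that $p/t$ is well-defined in the main argument. An alternative, essentially equivalent, route is to note that for fixed product $p$, Vieta's formulas express the larger element of a pair as $\tfrac{1}{2}\bigl(s + \sqrt{s^2 - 4p}\bigr)$ where $s$ is the sum; this map is strictly increasing in $s$, so strict inequality of the maxima transfers to strict inequality of the sums.
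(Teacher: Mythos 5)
Your proof is correct, and it rests on the same elementary observation as the paper's: with the product $p$ fixed, the sum of a nonnegative pair is a strictly increasing function of its larger element. The paper carries this out by a direct two-line estimate (writing $x_2 = y_2 + \varepsilon$ and bounding $x_1 > y_1 - \varepsilon$), while you phrase it as monotonicity of $t \mapsto t + p/t$ on $[\sqrt{p},\infty)$; the two are interchangeable, and your handling of the degenerate case $p=0$ is also fine.
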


The following lemma will imply the condition $x_1 x_2 = y_1 y_2$ of the previous lemma. See proof in \cref{sec:proofOfInvariantWindowPairs}.
\begin{lemma}[Permutation invariant window pairs]
\label{lemma:invariantWindowPairs}
	For all $m > 0$ and $1 \le i \le \frac{1}{2}\brk*{l + 2 - m}$, we have that
    \begin{equation*}
        \winOf[m]{i}{\varA} \winOf[m]{(l + 2 - m) - i}{\varA}
        = 
        \winOf[m]{i}{\permImp\varA} \winOf[m]{(l + 2 - m) - i}{\permImp\varA}.
    \end{equation*}
\end{lemma}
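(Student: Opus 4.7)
The plan is to show that the product $\winOf[m]{i}{\varA} \cdot \winOf[m]{(l+2-m)-i}{\varA}$ can be rewritten as a product, over $j$ in a single interval, of the pairs $\varAof{j} \cdot \varAof{l+1-j}$, and then argue that each such pair is preserved by $\permImp$ because $\permImp$ acts on positions only through swaps of the form $j \leftrightarrow l+1-j$.

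First I would do the index bookkeeping. The hypothesis $1 \le i \le \tfrac{1}{2}(l+2-m)$ together with the standing assumption $m \le l$ guarantees that the two windows $[i, i+m-1]$ and $[(l+2-m)-i, l+1-i]$ are disjoint, non-empty, and contained in $\{1,\dots,l\}$ (in particular, both windows are in the non-trivial case of \cref{eq:winDef}). With $j' = l+1-j$ as the substitution, the right endpoint of the second window pulls back to $i$ and the left endpoint to $i+m-1$, yielding the clean identity
\begin{equation*}
\winOf[m]{i}{\varA} \cdot \winOf[m]{(l+2-m)-i}{\varA}
=
\prod_{j=i}^{i+m-1} \varAof{j}\,\varAof{l+1-j}.
\end{equation*}

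Next I would establish the structural fact about $\permImp$: it is an involution of $\{1,\dots,d\}$ which either swaps the pair $\{j, l+1-j\}$ or fixes both elements of it. Directly from \cref{def:improving perm}, positions outside $\{1,\dots,l\}$ are fixed, and for $j \le l/2$ the map sends $j \mapsto l+1-j$ precisely when $\varAof{j} > \varAof{l+1-j}$; the symmetric condition for $l/2 < j \le l$ ensures that in this case $\permImp$ also sends $l+1-j \mapsto j$. Consequently the multiset $\{(\permImp\varA)_j, (\permImp\varA)_{l+1-j}\}$ equals $\{\varAof{j}, \varAof{l+1-j}\}$ for every $j \in \{1,\dots,l\}$, so in particular
\begin{equation*}
(\permImp\varA)_j \cdot (\permImp\varA)_{l+1-j}
=
\varAof{j} \cdot \varAof{l+1-j}.
\end{equation*}

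Combining the two displays, the product $\prod_{j=i}^{i+m-1}(\permImp\varA)_j (\permImp\varA)_{l+1-j}$ equals $\prod_{j=i}^{i+m-1}\varAof{j}\varAof{l+1-j}$, and applying the first display backwards to the left-hand side (now with $\permImp\varA$ in place of $\varA$) gives the claimed equality. The only subtle point, and therefore the step I would be most careful about, is verifying that for every $j \in [i, i+m-1]$ both indices $j$ and $l+1-j$ lie in $\{1,\dots,l\}$ so that the ``pair swap or pair fix'' dichotomy actually applies; this reduces to the two arithmetic inequalities $i+m-1 \le l$ and $l+2-m-i \ge 1$, both of which follow from $i \le \tfrac{1}{2}(l+2-m)$ and $m \le l$.
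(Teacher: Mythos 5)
Your argument is correct, and it reaches the result by a genuinely different (and somewhat more direct) route than the paper. The paper introduces the index sets $I_1=\{i,\dots,i+m-1\}$ and $I_2=\{(l+2-m)-i,\dots,l+1-i\}$ and proves that $\permImp$ maps both $I_1\cup I_2$ and $I_1\cap I_2$ onto themselves; writing the product of the two windows as a product over $I_1\cup I_2$ times a product over $I_1\cap I_2$ then gives invariance. You instead reindex the second window via $j\mapsto l+1-j$ and use the pairwise identity $(\permImp\varA)_j\,(\permImp\varA)_{l+1-j}=\varAof{j}\,\varAof{l+1-j}$, valid because $\permImp$ either swaps the pair $\{j,\,l+1-j\}$ or fixes both of its elements (and fixes all indices outside $\{1,\dots,l\}$). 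Both proofs rest on the same structural fact about $\permImp$ being built from reflections $j\leftrightarrow l+1-j$, but your pairing of factors avoids the union/intersection bookkeeping entirely.

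One assertion in your write-up is false, though harmlessly so: under the hypothesis $i\le\frac{1}{2}(l+2-m)$ the two windows need \emph{not} be disjoint. Disjointness would require roughly $i\le\frac{1}{2}(l+2-2m)$; for instance with $l=4$, $m=3$, $i=1$ the windows are $[1,3]$ and $[2,4]$, and at the extreme $i=\frac{1}{2}(l+2-m)$ (when $l-m$ is even) the two windows coincide entirely --- exactly the configuration in which the paper later invokes this lemma, e.g.\ in the proofs of \cref{lemma:improvingWindows,lemma:ImprovingSingleWindows}. Fortunately nothing in your argument uses disjointness: the reindexing identity is a pure change of variables in the second product, and the pair-preservation step only needs both windows to lie inside $\{1,\dots,l\}$, which is precisely the arithmetic you verify at the end. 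Delete the word ``disjoint'' and the proof stands as written.
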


Finally, the following lemma will imply the condition $\max \brk[c]*{y_1, y_2} < \max \brk[c]*{x_1, x_2}$ in \cref{lemma:aux1}. See proof in \cref{sec:proofOfDisjointWindows}.
\begin{lemma}[Improving disjoint windows]
\label{lemma:disjointWindows}
    For all $m > 0$ and $1 \le i \le \frac{1}{2}\brk*{l + 2 - 2m}$ we have that
    \begin{equation*}
        \max\brk[c]*{
            \winOf[m]{i}{\varA},
            \winOf[m]{(l + 2 - m) - i}{\varA}
            }
        \le 
        \winOf[m]{(l + 2 - m) - i}{\permImp\varA}.
    \end{equation*}
    Moreover, for $l = \varAlen$ if $\permImp[\varAlen]\varA \notin \brk[c]*{x, \permInv \varA}$ then there exist $i, m$ such that the inequality is strict.
\end{lemma}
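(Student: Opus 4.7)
The plan is to exploit the fact that the two windows in question, $[i,i+m-1]$ and $[(l+2-m)-i, l+1-i]$, are disjoint exact mirror images of each other under the involution $j \mapsto l+1-j$. First I would verify that the constraint $i \le \tfrac{1}{2}(l+2-2m)$ forces $i+m-1 \le l/2$ and $(l+2-m)-i \ge l/2+1$, so the left window sits in $\{1,\ldots,\lfloor l/2\rfloor\}$ and the right one in $\{\lceil l/2\rceil+1,\ldots,l\}$, and that reindexing by $k = l+1-j$ sends one window onto the other.

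Next I would compute each of the three quantities as a product over a common index set. Writing $W^{(m)}_i(\varA) = \prod_{j=i}^{i+m-1}\varAof{j}$ and reindexing the right window gives $W^{(m)}_{(l+2-m)-i}(\varA) = \prod_{j=i}^{i+m-1}\varAof{l+1-j}$. Recalling from \cref{def:improving perm} that $\permImp$ swaps each mirror pair so that the larger element lands at the position of larger index, one has $(\permImp\varA)_{l+1-j} = \max\{\varAof{j},\varAof{l+1-j}\}$ for every $j \le l/2$. Therefore
\begin{equation*}
    \winOf[m]{(l+2-m)-i}{\permImp\varA} = \prod_{j=i}^{i+m-1} \max\brk[c]*{\varAof{j},\varAof{l+1-j}}.
\end{equation*}
The inequality then reduces to the termwise (factorwise) estimate $\prod_j \max\{a_j,b_j\} \ge \max\{\prod_j a_j,\prod_j b_j\}$ for positive reals, which is immediate since every factor on the right-hand side dominates the corresponding factor on either side of the max on the left.

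For the strict inequality in the case $l=\varAlen$ with $\permImp[\varAlen]\varA \notin \brk[c]*{\varA,\permInv\varA}$, the hypothesis forces the existence of pair indices $j < j'$ in $\{1,\ldots,\lfloor d/2\rfloor\}$ with \emph{opposite strict orientations}, i.e.\ $\varAof{j} > \varAof{d+1-j}$ while $\varAof{j'} < \varAof{d+1-j'}$ (otherwise all nontrivial pairs would have the same orientation, making $\permImp[\varAlen]\varA$ equal to $\varA$ or $\permInv\varA$). I would then take $i = j$ and $m = j'-j+1$; the constraint $i \le (d-2m+2)/2$ becomes $j' \le d/2$, which holds. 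With this choice, the factor at index $j$ has $\max$ on the ``left'' side and the factor at index $j'$ has $\max$ on the ``right'' side, so neither $\prod_j \varAof{j}$ nor $\prod_j \varAof{l+1-j}$ attains equality with the product of maxes, yielding a strict inequality.

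The only mild obstacle is the bookkeeping in the last step: one has to be sure that the two opposite-orientation pairs can be packed into a single admissible window, i.e.\ that $j'$ does not exceed $d/2$. This is automatic because every pair index lies in $\{1,\ldots,\lfloor d/2\rfloor\}$, so no further case analysis is needed. Apart from this, the argument is a direct termwise comparison and a careful reindexing, with no calculation heavier than expanding the products.
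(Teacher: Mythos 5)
Your proof is correct and takes essentially the same route as the paper: the weak inequality is the same reindexing of the two mirror windows combined with the factorwise bound $\prod_k \max\{a_k,b_k\} \ge \max\bigl\{\prod_k a_k,\ \prod_k b_k\bigr\}$ (the product of maxima being exactly the window of $\permImp[\varAlen]\varA$), and the strictness argument packs two oppositely oriented mirror pairs into one admissible window, just described on the left half $\{1,\dots,\lfloor d/2\rfloor\}$ where the paper works on the mirror-equivalent right half with $j_1, j_2 > (d+1)/2$. The only cosmetic fix: the hypothesis yields one pair of each strict orientation but not necessarily with $x_j > x_{d+1-j}$ at the smaller of the two indices, so phrase the choice of $j<j'$ without fixing which orientation comes first — your termwise argument is symmetric in this, so nothing else changes.
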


\begin{proof}[of \cref{lemma:improvingWindows}]
    First, notice that the strict inequality condition follows directly from that of \cref{lemma:disjointWindows}. 
    Now, Denote
    \begin{align*}
        y_1 &= \winOf[m]{i}{\varA} \\
        y_2 &= \winOf[m]{(l + 2 - m) - i}{\varA} \\
        x_1 &= \winOf[m]{i}{\permImp\varA} \\
        x_2 &= \winOf[m]{(l + 2 - m) - i}{\permImp\varA}.
    \end{align*}
    Then, by \cref{lemma:invariantWindowPairs}, $x_1 x_2 = y_1 y_2$. We show that $\max\brk[c]*{y_1, y_2} \le x_2$, thus satisfying the requirements of \cref{lemma:aux1} and concluding the proof.
    If $i \le \frac{1}{2}\brk*{l + 2 - 2m}$ then \cref{lemma:disjointWindows} immediately implies the desired. Otherwise, if $i > \frac{1}{2}\brk*{l + 2 - 2m}$ then
    \begin{equation} \label{eq:windowDecomp1}
    \begin{aligned}
        \max\brk[c]*{y_1, y_2}
        &=
        \max\brk[c]*{
            \winOf[m]{i}{\varA},
            \winOf[m]{(l + 2 - m) - i}{\varA}
            } \\
        &=
        \max\brk[c]*{
            \prod_{j=i}^{i + m - 1} \varAof{j},
            \prod_{j=(l + 2 - m) - i}^{l +1 - i} \varAof{j}
            } \\
        &=
        \max\brk[c]*{
            \prod_{j=i}^{(l + 1 - m) - i} \varAof{j},
            \prod_{j=i + m}^{l + 1 - i} \varAof{j}
        }
        \prod_{j = (l + 2 - m) - i}^{i + m - 1}\varAof{j} \\
        &=
        \max\brk[c]*{
            \winOf[m_1]{i}{\varA},
            \winOf[m_1]{(l-m_1+2) - i}{\varA}
            }
        \winOf[m_0]{\frac{1}{2}(l + 2 - m_0)}{\varA},
    \end{aligned}
    \end{equation}
    where $m_0 = 2m + 2i - l - 2$, and $m_1 = m - m_0 = l + 2 - m - 2i$.
    Next, notice that
    \begin{align*}
        &m_0
        >
        2m + (l + 2 - 2m) - l - 2
        =
        0,
    \end{align*}
    and thus taking \cref{lemma:invariantWindowPairs} with $m = m_0$ and $i = \brk*{l + 2 - m} / 2$ we get that
    \begin{equation} \label{eq:windowInvariant1}
        \winOf[m_0]{\frac{1}{2}(l + 2 - m_0)}{\varA}
        =
        \winOf[m_0]{\frac{1}{2}(l + 2 - m_0)}{\permImp \varA}.
    \end{equation}
    Next, notice that
    \begin{align*}
        &m_1
        >
        l + 2 - m - (l + 2 - m)
        =
        0, \\
        &\frac12 \brk*{l + 2 - 2m_1}
        =
        2i
        -
        \frac12 (l + 2 - 2m)
        >
        i
        ,
    \end{align*}
    and thus taking \cref{lemma:disjointWindows} with $m = m_1$ we get that
    \begin{equation} \label{eq:windowImp1}
        \max\brk[c]*{
            \winOf[m_1]{i}{\varA},
            \winOf[m_1]{(l-m_1+2) - i}{\varA}
            }
        \le
        \winOf[m_1]{(l-m_1+2) - i}{\permImp \varA}.
    \end{equation}
    Plugging \cref{eq:windowInvariant1,eq:windowImp1} into \cref{eq:windowDecomp1} we finally get that
    \begin{align*}
        \max\brk[c]*{y_1, y_2}
        \le
        \winOf[m_0]{\frac{1}{2}(l + 2 - m_0)}{\permImp \varA}
        \winOf[m_1]{(l-m_1+2) - i}{\permImp \varA}
        =
        \winOf[m]{(l + 2 - m) - i}{\permImp \varA}
        =
        x_2,
    \end{align*}
    as desired.
\end{proof}

\subsection{Proof of \cref{lemma:ImprovingSingleWindows}}
\label{sec:proofOfImprovingSingleWindows}

\begin{proof}
    We split the proof into three cases according to the value of $i$.
    First, if $i < 1$ then the claim holds trivially since $\winOf[m]{i}{\varA} = 0$ for all $\varA$.
    Second, if $(l + 1) / 2 \le i \le \varAlen + 1 - m$ we have that for all $j \ge i$ $\varAof{\permImpOf{j}} \ge \varAof{j}$ (by definition of $\permImp$) and thus
    \begin{align} \label{eq:impSingleWindow1}
        \winOf[m]{i}{\varA}
        =
        \prod_{j=i}^{i + m - 1} \varAof{j}
        \le
        \prod_{j=i}^{i + m - 1} \varAof{\permImpOf{j}}
        =
        \winOf[m]{i}{\permImp\varA}.
    \end{align}
    Third, if $\max\brk[c]*{1, l + 2 - m} \le i \le \min\brk[c]*{\brk*{l + 1} / 2, \varAlen + 1 - m}$ then letting $m_2 = l + 2 - 2i > 0$, we notice that ${m - m_2 \ge i > 0}$ and so we get that
    \begin{align*}
        \winOf[m]{i}{\varA}
        &=
        \winOf[m_2]{i}{\varA}
        \winOf[m - m_2]{i + m_2}{\varA} \\
        &=
        \winOf[m_2]{\frac12 \brk*{l + 2 - m_2}}{\varA}
        \winOf[m - m_2]{i + m_2}{\varA} \\
        \tag{by \cref{lemma:invariantWindowPairs} with $i = (l + 2 - m_2) / 2$}
        &=
        \winOf[m_2]{\frac12 \brk*{l + 2 - m_2}}{\permImp\varA}
        \winOf[m - m_2]{i + m_2}{\varA} \\
        \tag{*}
        &\le
        \winOf[m_2]{\frac12 \brk*{l + 2 - m_2}}{\permImp\varA}
        \winOf[m - m_2]{i + m_2}{\permImp\varA} \\
        \tag{reversing initial equalities}
        &=
        \winOf[m]{i}{\permImp\varA},
    \end{align*}
    where $(*)$ follows from \cref{eq:impSingleWindow1} since $(l+1)/2 < i + m_2 \le \varAlen + 1 - (m - m_2)$. We covered all the desired values of $i$ thus proving the weak inequality.
    
    Finally, we show the strict inequality condition. If $\permImp \varA \neq \varA$ then there exists $i > (l+1)/2$ such that $\varAof{\permImpOf{i}} > \varAof{i}$ (as in \cref{lemma:disjointWindows}). Taking $i$, $m = \varAlen + 1 - i$, it is trivial to see that the weak inequality in \cref{eq:impSingleWindow1} becomes strict. Notice that $m > 0$, $i \le \varAlen + 1 - m$, and since $l \le \varAlen - 1$ we have that
    $
    i
    \ge
    l + 2 - m
    $.
    We conclude that $i,m$ satisfy the conditions of the lemma and the desired strict inequality.
    
\end{proof}

\subsection{Proof of \cref{lemma:aux1}}
\label{sec:proofOfaux1}
\begin{proof}
    If any of $x_1, x_2, y_1, y_2$ are equal to zero then the claim follows trivially. For the remainder of the proof we assume that $x_1, x_2, y_1, y_2 > 0$.
    Without loss of generality, let $x_2 = \max\brk[c]*{x_1, x_2}$ and $y_2 = \max\brk[c]*{y_1, y_2}$. By the assumptions of the lemma, this implies that $x_2 > y_2 > y_1 > x_1 > 0$.
    Then there exists $\varepsilon > 0$ such that
    \begin{equation}
    \label{eq:aux11}
        x_2
        =
        y_2 + \varepsilon
        .
    \end{equation}
    We then also have that
    \begin{align*}
        x_1
        &=
        \frac{x_1 x_2}{x_2} \\
        \tag{$x_1 x_2 = y_1 y_2$}
        &=
        y_1\frac{y_2}{x_2} \\
        \tag{by \cref{eq:aux11}}
        &=
        y_1 \brk*{1 - \frac{\varepsilon}{x_2}} \\
        \tag{$x_2 > y_1$}
        &>
        y_1 - \varepsilon,
    \end{align*}
    and adding up both results yields the desired.
\end{proof}

\subsection{Proof of \cref{lemma:invariantWindowPairs}}
\label{sec:proofOfInvariantWindowPairs}

\begin{proof}
    Denote the following two sets of indices
    \begin{align*}
        I_1 &= \brk[c]*{i, \ldots, i + m - 1}\\
        I_2 &= \brk[c]*{(l + 2 - m) - i, \ldots, (l+1) - i}.
    \end{align*}
    We will show that $\permImp$ is also a permutation on $I_1 \cup I_2$ and $I_1 \cap I_2$, i.e., $I_1 \cup I_2 = \permImpOf{I_1 \cup I_2}$ and $I_1 \cap I_2 = \permImpOf{I_1 \cap I_2}$, where $\permSigof{I}$ is the result of applying $\permSig$ to each element of $I$. The proof follows immediately since
    \begin{align*}
        \winOf[m]{i}{\varA} \winOf[m]{(l + 2 - m) - i}{\varA}
        &=
        \prod_{j=i}^{i+m-1} \varAof{j} 
        \prod_{j=(l + 2 - m) - i}^{l+1 - i} \varAof{j} \\
        &=
        \prod_{j \in I_1 \cup I_2} \varAof{j} 
        \prod_{j \in I_1 \cap I_2} \varAof{j} \\
        &= 
        \prod_{j \in \permImpOf{I_1 \cup I_2}} \varAof{j}
        \prod_{j \in \permImpOf{I_1 \cap I_2}} \varAof{j}\\
        \tag{$\permImp$ injective}
        &=
        \prod_{j \in I_1 \cup I_2} \varAof{\permImpOf{j}}
        \prod_{j \in I_1 \cap I_2} \varAof{\permImpOf{j}} \\
        &=
        \prod_{j=i}^{i+m-1} \varAof{\permImpOf{j}} 
        \prod_{j=(l + 2 - m) - i}^{l+1 - i} \varAof{\permImpOf{j}}
        =
        \winOf[m]{i}{\permImp\varA} \winOf[m]{(l + 2 - m) - i}{\permImp\varA}.
    \end{align*}
    Since $\permImp$ is a permutation and thus injective, it suffices to show that
    \begin{align}
        \permImpOf{I_1 \cap I_2} &\subseteq I_1 \cap I_2;
        \label{eq:indexIntersection}
        \\
        \permImpOf{I_1 \cup I_2} &\subseteq I_1 \cup I_2.
        \label{eq:indexUnion}
    \end{align}
    Indeed, if $I_1 \cap I_2 = \emptyset$ then \cref{eq:indexIntersection} is trivial. Otherwise, let $j \in I_1 \cap I_2 = \brk[c]*{(l + 2 - m) - i, \ldots, i + m - 1}$. If $\permImpOf{j} = j$ then clearly $\permImpOf{j} \in {I_1 \cap I_2}$. Otherwise $\permImpOf{j} = l + 1 - j$ and we have that
    \begin{align*}
        l + 1 - j
        &\ge
        l + 1 - (i + m - 1)
        =
        (l + 2 - m) - i, \\
        l + 1 - j
        &\le
        l + 1 - (l + 2 - m) + i
        =
        i + m - 1,
    \end{align*}
    thus showing \cref{eq:indexIntersection}. Now for \cref{eq:indexUnion}, let $j \in I_1 \cup I_2$. If $\permImpOf{j} = j$ then clearly $\permImpOf{j} \in {I_1 \cup I_2}$. Otherwise $\permImpOf{j} = l + 1 - j$ and we have the following. If $j \in I_1$ then
    \begin{align*}
        l + 1 - j
        &\ge
        l + 1 - i, \\
        l + 1 - j
        &\le
        l + 1 - (i + m - 1)
        =
        (l + 2 - m) - i,
    \end{align*}
    meaning $\permImpOf{j} \in I_2$. On the other hand, if $j \in I_2$ then
    \begin{align*}
        l + 1 - j
        &\ge
        l + 1 - (l + 1 - i)
        =
        i, \\
        l + 1 - j
        &\le
        l + 1 - (l + 2 - m) + i
        =
        i + m - 1,
    \end{align*}
    meaning $\permImpOf{j} \in I_1$ thus showing \cref{eq:indexUnion} and completing the proof.
\end{proof}

\subsection{Proof of \cref{lemma:disjointWindows}}
\label{sec:proofOfDisjointWindows}

\begin{proof}
    Recalling the definition of $\winOf[m]{i}{\varA}$ in \cref{eq:winDef}, we have that
    \begin{align*}
        \max\brk[c]*{\winOf[m]{i}{\varA}, 
                 \winOf[m]{(l + 2 - m) - i}{\varA}}
        &=
        \max\brk[c]*{\prod_{j=i}^{i+m-1} \varAof{j}, 
                 \prod_{j=(l + 2 - m) - i}^{l+1 - i} \varAof{j}} \\
        &=
        \max\brk[c]*{\prod_{j=(l + 2 - m) - i}^{l+1 - i} \varAof{l+1-j}, 
                 \prod_{j=(l + 2 - m) - i}^{l+1 - i} \varAof{j}}\\
        &\leq
        \prod_{j=(l + 2 - m) - i}^{l+1 - i}
        \max\brk[c]*{\varAof{l+1-j}, \varAof{j}} \\
        &=
        \prod_{j=(l + 2 - m) - i}^{l+1 - i}
        \varAof{\permImpOf{j}} \\
        &=
        \winOf[m]{(l + 2 - m) - i}{\permImp\varA},
    \end{align*}
    where the second to last equality follows from the definition of $\permImp$. To see this, notice that for $j > \frac{l}{2}$ (which is indeed our case since $j \geq (l + 2 - m)-i$ and $i \leq \frac{1}{2}(l+2-2m)$), if $\varAof{j} < \varAof{l+1-j}$, then 
    $
        \varAof{\permImpOf{j}}
        =
        \varAof{l+1-j}
        =
        \max\brk[c]*{\varAof{l+1-j}, \varAof{j}}
        .
    $
    Otherwise, $\varAof{j} \geq \varAof{l+1-j}$ and then 
    $
        \varAof{\permImpOf{j}}
        =
        \varAof{j}
        =
        \max\brk[c]*{\varAof{l+1-j}, \varAof{j}}
        ,
    $
    giving the desired equality.
    
    Now, the weak inequality above becomes strict if and only if there exist 
    $
    j_1, j_2 \in \brk[s]*{(l+2-m)-i, l+1-i}
    $
    such that
    $\varAof{j_1} < \varAof{l+1-j_1}$
    and
    $\varAof{j_2} > \varAof{l+1-j_2}$.
    We show that the strict inequality condition implies the existence of such $j_1, j_2$ thus concluding the proof. Let $l = \varAlen$ and recall that for 
    $j > (\varAlen+1) / 2$,
    $\permImp[\varAlen]$ exchanges $x_j$ and $x_{\varAlen+1-j}$ if and only if 
    $x_j < x_{\varAlen+1-j}$.
    Since 
    $\permImp[\varAlen]\varA \neq \varA$,
    i.e., $\permImp[\varAlen]$ makes an exchange, there exists 
    $j_1 > (\varAlen+1) / 2$
    such that 
    $x_{j_1} < x_{l+1-j_1}$.
    Since 
    $
    \permImp[\varAlen]\varA \neq \permInv \varA
    $,
    there exists $j$ such that
    $
    \varAof{\permImpOf[\varAlen]{j}} \neq \varAof{\varAlen+1-j}
    $
    and since 
    $
    \permImpOf[\varAlen]{j} \in \brk[c]*{j, \varAlen+1-j}
    $
    we have that 
    $\permImpOf[\varAlen]{j} = j$.
    If
    $j > (\varAlen+1) / 2$
    this implies that 
    $\varAof{j} > \varAof{\varAlen+1-j}$
    and so we take $j_2 = j$.
    If 
    $j \le \varAlen / 2$
    then
    $\varAof{j} < \varAof{\varAlen+1-j}$
    and so we take 
    $j_2 = \varAlen+ 1 - j > (\varAlen+1) / 2$. 
    Assume without loss of generality that $j_1 < j_2$ and take 
    $m = 1 + (j_2 - j_1) \ge 2$ 
    and
    $i = (\varAlen + 2 - m) - j_1 = \varAlen + 1 - j_2$. Then
    \begin{align*}
        j_1, j_2
        \in
        \brk[s]*{(\varAlen+2-m)-i, \varAlen+1-i}
        =
        \brk[s]*{j_1, j_2}
        ,
    \end{align*}
    and since $j_1 > (\varAlen + 1) / 2$ we also have that
    $
    i \le (\varAlen + 2 - 2m) / 2
    .
    $
    We conclude that the chosen $i,m$ satisfy the condition for strict inequality, as desired.
    
\end{proof}

\section{Proof of \cref{lemma:pendulumSort}}
\label{sec:proofOfPendSort}

We first need the following lemma whose proof may be found in \cref{sec:proofOfNiExpression}.
\begin{lemma} \label{lemma:NiExpression}
    Let
    $
        \permTilde[\varAlen] = \permSortToPend^{-1} \permImp[\varAlen] \permSortToPend,\;
        \permTilde[\varAlen - 1] = \permSortToPend^{-1} \brk*{\permInv \permImp[\varAlen - 1] \permInv} \permSortToPend
        ,
    $
    where $\permSortToPend$ is from \cref{lemma:sort to pend} and $\permImp$ is from \cref{def:improving perm}, and for $z \in \RR[\varAlen]$, let $N_i\brk*{z}$ be the number of elements in $\brk[c]*{z_{1}, \ldots, z_{i -1}}$ that are strictly greater than $z_{i}$, i.e.,
	\begin{equation*}
		N_i\brk*{z}
		=
		\abs{
			\setDef{j \given j < i \;\land\; z_{j} > z_{i}}}
		.
	\end{equation*}
	We have that
	\begin{align*}
	    N_i(\permTilde[\varAlen] z)
	    \le
	    \begin{cases}
	        N_{\max\brk[c]*{1, i - 1}}(z)&, i \text{ even},\\
	        \max\brk[c]*{N_i(z), N_{i+1}(z) - 1}&, i \text{ odd and } i < \varAlen,\\
	        N_{\varAlen}(z)&, i = \varAlen \text{ odd}
	        ,
	    \end{cases}
	\end{align*}
	\begin{align*}
	    N_i(\permTilde[\varAlen - 1] z)
	    \le
	    \begin{cases}
	        N_{\max\brk[c]*{1, i - 1}}(z)&, i \text{ odd},\\
	        \max\brk[c]*{N_i(z), N_{i+1}(z) - 1}&, i \text{ even and } i < \varAlen,\\
	        N_{\varAlen}(z)&, i = \varAlen \text{ even}
	        .
	    \end{cases}
	\end{align*}
\end{lemma}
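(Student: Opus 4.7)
My plan is to first pin down the explicit action of $\permTilde[\varAlen]$ and $\permTilde[\varAlen-1]$ on a vector $z$, and then reduce each bound on $N_i$ to a short local calculation at the single pair of indices where $z$ and its image differ.

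\textbf{Step 1: Explicit form of $\permTilde[\varAlen]$ and $\permTilde[\varAlen-1]$.} Using the formulas for $\permSortToPend$ and $\permSortToPendInv$ from \cref{lemma:sort to pend} and unrolling the conjugations defining $\permTilde[\varAlen]$ and $\permTilde[\varAlen-1]$, I intend to show that $\permTilde[\varAlen]$ is exactly the operation that sorts each adjacent pair at odd starting index, namely $(z_{2k-1}, z_{2k}) \mapsto (\min(z_{2k-1}, z_{2k}), \max(z_{2k-1}, z_{2k}))$ for every valid $k$, leaving $z_{\varAlen}$ unchanged when $\varAlen$ is odd. The extra conjugation by $\permInv$ in the definition of $\permTilde[\varAlen-1]$ shifts the pairing by one position: $\permTilde[\varAlen-1]$ sorts each adjacent pair at even starting index, $(z_{2k}, z_{2k+1}) \mapsto (\min(z_{2k}, z_{2k+1}), \max(z_{2k}, z_{2k+1}))$, leaving $z_1$ always, and $z_\varAlen$ when $\varAlen$ is even, unchanged. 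The key combinatorial observation here is that $\permSortToPend$ sends odd sort-indices $j \le (\varAlen+1)/2$ to $2j-1$ and even sort-indices to the right half of the line, so a mirror swap in pendulum coordinates translates exactly to a swap of two neighboring entries in sort coordinates.

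\textbf{Step 2: Prefix invariance.} Because each operation acts only within disjoint adjacent pairs, the multiset $\{y_j : j \le i\}$ coincides with $\{z_j : j \le i\}$ whenever the prefix ends at a pair boundary. This reduces every bound on $N_i(y)$ to understanding the single pair of elements straddling position $i$.

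\textbf{Step 3: Case analysis for $\permTilde[\varAlen]$.} Set $y = \permTilde[\varAlen] z$. If $i = 2k$ is even, then $y_i = \max(z_{2k-1}, z_{2k})$, $y_{i-1} = \min(z_{2k-1}, z_{2k}) \le y_i$, and by Step 2 the first $i-2$ entries of $y$ and $z$ agree as multisets, giving
\[
N_i(y) = |\{j < i-1 : z_j > \max(z_{2k-1}, z_{2k})\}| \le |\{j < i-1 : z_j > z_{2k-1}\}| = N_{i-1}(z).
\]
If $i = 2k-1$ is odd with $i < \varAlen$, the first $i-1$ entries of $y$ and $z$ agree as multisets, so $N_i(y) = |\{j < i : z_j > \min(z_{2k-1}, z_{2k})\}|$; if the minimum equals $z_i$ this is $N_i(z)$, otherwise it equals $z_{i+1}$ and the count is $N_{i+1}(z) - 1$ because the indicator $\indEvent{z_i > z_{i+1}} = 1$ contributing to $N_{i+1}(z)$ falls outside the new range. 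If $i = \varAlen$ is odd, then $y_\varAlen = z_\varAlen$ and the preceding prefix matches as a multiset, so $N_\varAlen(y) = N_\varAlen(z)$. Step 4 for $\permTilde[\varAlen-1]$ is the same three-case argument after interchanging even and odd and relocating the unpaired singletons to position $1$ (always) and to $\varAlen$ (when $\varAlen$ is even); the inequalities come out identically.

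The main obstacle is Step 1: the conjugation $\permSortToPendInv \permImp \permSortToPend$ and its mirrored variant must be tracked carefully under the convention $(\permSig x)_i = x_{\permSig(i)}$, and it is easy to get the direction of a swap wrong through three composed permutations. Once the pair-sort descriptions are established, Steps 2 and 3 are pure bookkeeping over a handful of parity cases.
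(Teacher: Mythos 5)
Your proposal is correct and follows essentially the same route as the paper: your Step~1 pair-sort description of $\permTilde[\varAlen]$ and $\permTilde[\varAlen-1]$ is exactly the paper's \cref{lemma:permTildeExpression}, and your prefix-multiset argument with the parity case analysis (min/max of the straddling pair, the $N_{i+1}(z)-1$ correction, and the unpaired endpoints) reproduces the paper's counting proof of the stated inequalities. The only part left as intent, the conjugation computation establishing the pair-sort form, is precisely what the paper verifies explicitly, and your claimed outcome of that computation is the correct one.
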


\begin{proof}[of \cref{lemma:pendulumSort}]
    Let $\permTilde[\varAlen], \permTilde[\varAlen-1]$ be defined as in \cref{lemma:NiExpression}, and notice that
    \begin{equation*}
        \permSortToPend \brk*{\permTilde[\varAlen - 1] \permTilde[\varAlen]}^k \permSortToPend^{-1}
        =
        \permSortToPend \brk*{ \permSortToPend^{-1} \permInv \permImp[\varAlen - 1] \permInv \permImp[\varAlen] \permSortToPend}^k \permSortToPend^{-1}
        =
        \brk*{\permInv \permImp[\varAlen - 1] \permInv \permImp[\varAlen]}^k.
    \end{equation*}
    Recall that by \cref{lemma:sort to pend} we have that $\permSortToPend \sortOf{\varA} = \pendOf{\varA}$. We show that $\brk*{\permTilde[\varAlen - 1] \permTilde[\varAlen] }^k z = \sortOf{z}$ for all $z \in \RR[\varAlen], k \ge \varAlen / 2$, and then choosing $z = \permSortToPend^{-1} \varA$ concludes the proof.

	To prove the desired we need the following definition. For $z \in \RR[\varAlen]$, let $N_i\brk*{z}$ be the number of elements in $\brk[c]*{z_{1}, \ldots, z_{i -1}}$ that are strictly greater than $z_{i}$. Formally
	\begin{equation*}
		N_i\brk*{z}
		=
		\abs{
			\setDef{j \given j < i \;\land\; z_{j} > z_{i}}}
		.
	\end{equation*}
	Notice that
	\begin{equation} \label{eq:sortedCondition}
		z = \sortOf{z}
		\iff
		N_i\brk*{z} = 0, \;\forall 1 \le i \le \varAlen,
	\end{equation}
	and also that $N_i\brk*{z} \le i - 1$ for all $z \in \RR[\varAlen]$. 
	Now, let $i \in \brk[c]*{1, \ldots, \varAlen}$ be odd, then using \cref{lemma:NiExpression} we have that
	\begin{align*}
	    N_i\brk*{\permTilde[\varAlen - 1]\permTilde[\varAlen] z}
	    \le
	    N_{\max\brk[c]*{1, i - 1}}\brk*{\permTilde[\varAlen] z}
	    \le
	    N_{\max\brk[c]*{1, i - 2}}\brk*{z},
	\end{align*}
	where the second transition used the fact that $i - 1$ is even. Applying this recursively, we get that for $i$ odd and $k \ge 0$
	\begin{equation} \label{eq:NiOdd}
	    N_i\brk*{\brk*{\permTilde[\varAlen-1]\permTilde[\varAlen]}^k z}
	    \le
	    N_{\max\brk[c]*{1, i - 2k}}\brk*{z}
	    \le
	    \max\brk[c]*{0, i - 2k - 1}.
	\end{equation}
	Now, let $i \in \brk[c]*{1,\ldots, \varAlen}$ be even, and split into three cases.
	In the first case, $i = \varAlen$ and thus $\varAlen$ is even. Then using \cref{lemma:NiExpression} we have that
	\begin{align*}
	    N_{i}\brk*{\permTilde[\varAlen - 1]\permTilde[\varAlen] z}
	    =
	    N_{\varAlen}\brk*{\permTilde[\varAlen - 1]\permTilde[\varAlen] z}
	    \le
	    N_{\varAlen}\brk*{\permTilde[\varAlen] z}
	    \le
	    N_{\max\brk[c]*{1, \varAlen - 1}}\brk*{z},
	\end{align*}
	and since here $\varAlen - 1$ is odd, we use \cref{eq:NiOdd} we get that for $i = \varAlen$ even and $k \ge 0$
	\begin{align} \label{eq:NdEven}
	    N_{i}\brk*{\brk*{\permTilde[\varAlen-1]\permTilde[\varAlen]}^k z}
	    \le
	    N_{\max\brk[c]*{1, \varAlen-1}}\brk*{\brk*{\permTilde[\varAlen-1]\permTilde[\varAlen]}^{k-1} z}
	    \le
	    \max\brk[c]*{0, \varAlen - 2k}.
	\end{align}
	In the second case, $i = \varAlen - 1$ and thus $\varAlen$ is odd. Then using \cref{lemma:NiExpression} we have that
	\begin{align*}
	    N_{i}\brk*{\permTilde[\varAlen - 1]\permTilde[\varAlen] z}
	    =
	    N_{\varAlen - 1}\brk*{\permTilde[\varAlen - 1]\permTilde[\varAlen] z}
	    &\le
	    \max\brk[c]*{
	        N_{\varAlen - 1}\brk*{\permTilde[\varAlen] z},
	        N_{\varAlen}\brk*{\permTilde[\varAlen] z} - 1
	        } \\
        &\le
	    \max\brk[c]*{
	        N_{\max\brk[c]*{1, \varAlen - 2}}\brk*{z},
	        N_{\varAlen}\brk*{z} - 1
	        },
	\end{align*}
	and since $\varAlen, \varAlen - 2$ are odd, we can use \cref{eq:NiOdd} to get that for $i = \varAlen - 1$, $i$ even and $k \ge 0$
	\begin{align}
	    \nonumber
	    N_{i}\brk*{\brk*{\permTilde[\varAlen - 1]\permTilde[\varAlen]}^k z}
	    &\le
	    \max\brk[c]*{
	        N_{\max\brk[c]*{1, \varAlen - 2}}\brk*{\brk*{\permTilde[\varAlen - 1]\permTilde[\varAlen]}^{k-1} z},
	        N_{\varAlen}\brk*{\brk*{\permTilde[\varAlen - 1]\permTilde[\varAlen]}^{k-1} z} - 1
	        } \\
        \label{eq:Nd-1Even}
        &\le
        \max\brk[c]*{
            0,
            \varAlen - 2 - 2k + 1,
            \varAlen - 2k
        }
        =
        \max\brk[c]*{0, \varAlen - 2k}.
	\end{align}
	Finally, in the third case, $i \le \varAlen - 2$ is even. Then using \cref{lemma:NiExpression} we have that
	\begin{align*}
	    N_{i}\brk*{\permTilde[\varAlen - 1]\permTilde[\varAlen] z}
	    &\le
	    \max\brk[c]*{
	        N_{i}\brk*{\permTilde[\varAlen] z},
	        N_{i+1}\brk*{\permTilde[\varAlen] z} - 1
	        } \\
        &\le
        \max\brk[c]*{
	        N_{\max\brk[c]*{1, i - 1}}\brk*{z},
	        N_{i+1}\brk*{z} - 1,
	        N_{i+2}\brk*{z} - 2
	        }
	\end{align*}
	Replacing $z$ with $\brk*{\permTilde[\varAlen - 1]\permTilde[\varAlen]}^{k-1} z$ and applying \cref{eq:NiOdd} we get that for $k \ge 0$
	\begin{align*}
	    N_{i}\brk*{\brk*{\permTilde[\varAlen - 1]\permTilde[\varAlen]}^{k} z}
	    &\le
	    \max\brk[c]*{
	        0,
	        i - 1 - 2k + 1,
	        i + 1 - 2k,
	        N_{i+2}\brk*{\brk*{\permTilde[\varAlen - 1]\permTilde[\varAlen]}^{k-1} z} - 2
	    } \\
	    &\le
	    \max\brk[c]*{
	        0,
	        \varAlen - 2k,
	        N_{i+2}\brk*{\brk*{\permTilde[\varAlen - 1]\permTilde[\varAlen]}^{k-1} z} - 2
	    }
	    .
	\end{align*}
	Now, let $k \ge \varAlen / 2$ and let $k_i = \ceil{\brk*{\varAlen - i - 1} / 2}$. We open the recursion above $k_i$ times to get that for $k \ge \varAlen / 2$
	\begin{align}
	    \nonumber
	    N_{i}\brk*{\brk*{\permTilde[\varAlen - 1]\permTilde[\varAlen]}^{k} z}
	    &\le
	    \max\brk[c]*{
	        0,
	        \varAlen - 2k,
	        N_{i+2k_i}\brk*{\brk*{\permTilde[\varAlen - 1]\permTilde[\varAlen]}^{k-k_i} z} - 2k_i
	    } \\
	    \label{eq:NiEven}
	    &\le
	    \max\brk[c]*{
	        0,
	        \varAlen - 2k,
	        \varAlen - 2(k - k_i) - 2k_i
	    }
	    =
	    \max\brk[c]*{
	        0,
	        \varAlen - 2k,
	    },
	\end{align}
	where the second to last transition follows using \cref{eq:NdEven,eq:Nd-1Even} since $i + 2k_i \in \brk[c]*{\varAlen - 1, \varAlen}$.
	Combining \cref{eq:NiOdd,eq:NdEven,eq:Nd-1Even,eq:NiEven} with $k \ge \varAlen / 2$ we conclude that $N_{i}\brk*{\brk*{\permTilde[\varAlen - 1]\permTilde[\varAlen]}^{k} z} = 0$ for all $1 \le i \le \varAlen$ and thus by \cref{eq:sortedCondition} that $\brk*{\permTilde[\varAlen - 1]\permTilde[\varAlen]}^{k} z = \sortOf{z}$.
\end{proof}

\subsection{Proof of \cref{lemma:NiExpression}} \label{sec:proofOfNiExpression}
We first need the following lemma whose proof may be found in \cref{sec:proofOfPermTildeExpression}.
\begin{lemma} \label{lemma:permTildeExpression}
    Define 
    $
        \permTilde[\varAlen] = \permSortToPend^{-1} \permImp[\varAlen] \permSortToPend,
        \permTilde[\varAlen - 1] = \permSortToPend^{-1} \brk*{\permInv \permImp[\varAlen - 1] \permInv} \permSortToPend
        ,
    $
    where $\permSortToPend$ is from \cref{lemma:sort to pend} and $\permImp$ is from \cref{def:improving perm}. Then we have that
    \begin{align*}
        \permTildeOf[\varAlen]{i} = 
        \begin{cases}
            i - 1 & , i > 1 \land i \text{ even } \land \varAof{i} < \varAof{i - 1} \\
            i + 1 &, i < \varAlen \land i \text{ odd } \land \varAof{i} > \varAof{i + 1} \\
            i &, \text{otherwise}
        \end{cases}
    \end{align*}
    \begin{align*}
        \permTildeOf[\varAlen - 1]{i} = 
        \begin{cases}
            i - 1 & , i > 1 \land i \text{ odd } \land \varAof{i} < \varAof{i - 1} \\
            i + 1 &, i < \varAlen \land i \text{ even } \land \varAof{i} > \varAof{i + 1} \\
            i &, \text{otherwise}
        \end{cases}
    \end{align*}
\end{lemma}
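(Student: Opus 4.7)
The plan is to compute $\permTilde[\varAlen]$ and $\permTilde[\varAlen-1]$ index-by-index by unwinding the composition explicitly. Using the convention $\brk*{\permSig_1\permSig_2\varA}_i = \varAof{\permSig_2\brk*{\permSig_1\brk*{i}}}$ implied by $q_i=\varAof{\permSigof{i}}$ (extended to longer products by associativity), I can write
\begin{equation*}
\permTildeOf[\varAlen]{i} = \permSortToPendOf{\permImpOf[\varAlen]{\permSortToPendInvOf{i}}},
\qquad
\permTildeOf[\varAlen-1]{i} = \permSortToPendOf{\permInvOf{\permImpOf[\varAlen-1]{\permInvOf{\permSortToPendInvOf{i}}}}},
\end{equation*}
where $\permImp[\varAlen]$ and $\permImp[\varAlen-1]$ are understood with respect to the vectors $w := \permSortToPend\varA$ and $\permInv w$ respectively. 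The explicit form of $\permSortToPendInv$ is already derived inside the proof of \cref{lemma:sort to pend}.

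The core observation is that, for $k \le \varAlen/2$, the mirror pair $\brk*{k,\,\varAlen+1-k}$ in $w$-coordinates corresponds to the adjacent $\varA$-pair $\brk*{2k-1,\,2k}$: indeed $w_k = \varAof{\permSortToPendOf{k}} = \varAof{2k-1}$ since $k \le (\varAlen+1)/2$, and $w_{\varAlen+1-k} = \varAof{\permSortToPendOf{\varAlen+1-k}} = \varAof{2k}$ since $\varAlen+1-k > (\varAlen+1)/2$. Hence $\permImp[\varAlen]$, which swaps mirror pairs of $w$ placing the larger at the higher index, has the net effect of swapping the $\varA$-positions $2k-1$ and $2k$ exactly when $\varAof{2k-1} > \varAof{2k}$. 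A parity split on $i$ then produces the formula: for $i = 2k-1$ odd, $\permSortToPendInvOf{i} = k$, and a swap (condition $\varAof{i} > \varAof{i+1}$) sends $k \mapsto \varAlen+1-k$, whose image under $\permSortToPend$ is $2k = i+1$; otherwise the index is fixed and maps back to $\permSortToPendOf{k} = i$. For $i = 2k$ even, $\permSortToPendInvOf{i} = \varAlen+1-k$, the ``right'' member of the same mirror pair; the swap condition, now rephrased through the $j > l/2$ clause of $\permImp$, is still $\varAof{i-1} > \varAof{i}$, and sends the index to $k$, giving $\permSortToPendOf{k} = 2k-1 = i-1$.

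For $\permTilde[\varAlen-1]$, conjugation by $\permInv$ shifts the pairing structure by one index. A direct computation shows that $\permInv\permImp[\varAlen-1]\permInv$ acts on $w$ by swapping the shifted pairs $\brk*{w_j,\,w_{\varAlen+2-j}}$, placing the larger at the smaller index $j$. Substituting $w_j = \varAof{2j-1}$ and $w_{\varAlen+2-j} = \varAof{2j-2}$ (valid in the relevant range of $j$), and setting $k = j-1$, this corresponds to swapping the adjacent $\varA$-pairs $\brk*{2k,\,2k+1}$ exactly when $\varAof{2k} > \varAof{2k+1}$. A parallel parity case-split on $i$ yields the stated formula for $\permTilde[\varAlen-1]$. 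The one structural difference is that position $i = 1$ is never moved, since the corresponding $w$-index is $\varAlen$, which lies outside the active range $\brk[c]*{1,\dots,\varAlen-1}$ of $\permImp[\varAlen-1]$; this matches the lemma, where the $i = 1$ case fails the $i > 1$ requirement of the swap clause.

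The main obstacle is the bookkeeping: tracking which branch of $\permSortToPend$ vs.\ $\permSortToPendInv$ applies at each step, and verifying that all boundary indices --- $i = 1$, $i = \varAlen$, and the center positions when $\varAlen$ is odd or even --- land in the ``otherwise'' branch. In each such edge case the relevant $w$-position is either the fixed center of $\permImp$ (for odd $l$) or lies outside $\permImp$'s active range (for $l = \varAlen - 1$), so the composition collapses to the identity on $i$, in agreement with the formula.
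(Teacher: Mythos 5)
Your proposal is correct and follows essentially the same route as the paper: both unwind the conjugation $\permSortToPend^{-1}\permImp\permSortToPend$ explicitly using the closed forms of $\permSortToPend$, $\permSortToPendInv$, and $\permImp$, and verify the claimed index map case by case (the paper parametrizes by the pre-image index and checks $\permSortToPend\permTilde[\varAlen]=\permImp[\varAlen]\permSortToPend$, while you parametrize by the final index and split on its parity --- the same computation). Your identification of the mirror pairs of $w=\permSortToPend\varA$ with adjacent pairs $(2k-1,2k)$ of $\varA$, and of the shifted pairs with $(2k,2k+1)$ for the $\permTilde[\varAlen-1]$ case, matches the paper's substitutions, and your boundary-index bookkeeping is consistent with the lemma's ``otherwise'' clauses.
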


\begin{proof}[of \cref{lemma:NiExpression}]
    We prove the expression for $\permTilde[\varAlen]$. The proof for $\permTilde[\varAlen - 1]$ is identical.
    Throughout the proof we treat $\permTilde[\varAlen]$ as the expression derived for it in \cref{lemma:permTildeExpression}.
    First, notice that for $j < i$ we have that $\permTildeOf[\varAlen]{j} \le i$. Moreover, if $i$ is odd then $\permTildeOf[\varAlen]{i-1} \le i - 1$ and thus $\permTildeOf[\varAlen]{j} < i$. We conclude that
    \begin{align} \label{eq:permTildeSubset}
        \setDef{\permTildeOf[\varAlen]{j} \given j < i}
        \subseteq
        \begin{cases}
            \setDef{j \given j < i}&, i \text{ odd} \\
            \setDef{j \given j \le i}&, \text{otherwise}.
        \end{cases}
    \end{align}
    Using the above, we have that for any $z \in \RR[\varAlen]$
    \begin{align*}
        N_i\brk*{\permTilde[\varAlen] z}
        &=
        \abs{\setDef{j \given j < i \land z_{\permTildeOf[\varAlen]{j}} > z_{\permTildeOf[\varAlen]{i}}}} \\
        &=
        \abs{\setDef{\permTildeOf[\varAlen]{j} \given j < i \land z_{\permTildeOf[\varAlen]{j}} > z_{\permTildeOf[\varAlen]{i}}}} \\
        \tag{by \cref{eq:permTildeSubset}}
        &\le
        \begin{cases}
            \abs{\setDef{j \given j < i \land z_{j} > z_{\permTildeOf[\varAlen]{i}}}}&, i \text{ odd} \\
            \abs{\setDef{j \given j \le i \land z_{j} > z_{\permTildeOf[\varAlen]{i}}}}&, \text{otherwise}
        \end{cases} \\
        \tag{by \cref{lemma:permTildeExpression}}
        &\le
        \begin{cases}
            \abs{\setDef{j \given j \le i \land z_{j} > z_{i-1}}}&, i >1 \land i \text{ even} \land z_i < z_{i-1} \\
            \abs{\setDef{j \given j < i \land z_{j} > z_{i+1}}}&, i < d \land i \text{ odd} \land z_i > z_{i+1} \\
            \abs{\setDef{j \given j \le i \land z_{j} > z_{i}}}&, \text{otherwise}.
        \end{cases}
    \end{align*}
    Notice that if $i > 1$ and $z_i < z_{i-1}$ then
    $$
        \abs{\setDef{j \given j \le i \land z_{j} > z_{i-1}}}
        =
        \abs{\setDef{j \given j < i - 1 \land z_{j} > z_{i-1}}}
        =
        N_{i-1}\brk*{z}
        ,
    $$
    and if $i < d$ and $z_i > z_{i+1}$ then
    $$
        \abs{\setDef{j \given j < i \land z_{j} > z_{i+1}}}
        =
        \abs{\setDef{j \given j < i + 1 \land z_{j} > z_{i+1}}} - 1
        =
        N_{i+1}\brk*{z} - 1
        ,
    $$
    and finally that
    $$
        \abs{\setDef{j \given j \le i \land z_{j} > z_{i}}}
        =
        \abs{\setDef{j \given j < i \land z_{j} > z_{i}}}
        =
        N_i\brk*{z}
        .
    $$
    Plugging these back into the above inequality we get that
    \begin{align*}
        N_i\brk*{\permTilde[\varAlen] z}
        \le
        \begin{cases}
            N_{i-1}\brk*{z}&, i >1 \land i \text{ even} \land z_i < z_{i-1} \\
            N_{i+1}\brk*{z} - 1&, i < d \land i \text{ odd} \land z_i > z_{i+1} \\
            N_i\brk*{z}&, \text{otherwise}.
        \end{cases}
    \end{align*}
    Now, if $z_i \ge z_{i-1}$ then
    \begin{align*}
        N_i\brk*{z}
        =
        \abs{\setDef{j \given j < i \land z_{j} > z_{i}}}
        &=
        \abs{\setDef{j \given j < i - 1 \land z_{j} > z_{i}}} \\
        &\le
        \abs{\setDef{j \given j < i - 1 \land z_{j} > z_{i - 1}}}
        =
        N_{i-1}\brk*{z},
    \end{align*}
    and using this fact, and some manipulations on the cases of the previous inequality, we conclude that
    \begin{align*}
        N_i\brk*{\permTilde[\varAlen] z}
        \le
        \begin{cases}
            N_{i-1}\brk*{z}&, i \text{ even} \\
            \max\brk[c]*{N_i\brk*{z}, N_{i+1}\brk*{z} - 1}&, i < d \land i \text{ odd} \\
            N_{\varAlen}\brk*{z}&, i = \varAlen \text{ odd}.
        \end{cases}
    \end{align*}
    Since for $i$ even we have that $i-1 = \max\brk[c]*{1, i-1}$, the proof is concluded.
\end{proof}

\subsection{Proof of \cref{lemma:permTildeExpression}} \label{sec:proofOfPermTildeExpression}
\begin{proof}
    Recall that $\permImp$ is defined w.r.t. the vector it permutes. \\
    Specifically, we have that ${(\permImp[\varAlen] \permSortToPend \varA)(i) = \varAof{\permSortToPendOf{\permImp[\varAlen]^{\permSortToPend\varA}(i)}}}$, where we used $\permImp[\varAlen]^{\permSortToPend\varA}$ to denote $\permImp[\varAlen]$ w.r.t. the vector it permutes, i.e., w.r.t. $\permSortToPend\varA$. 
    We have that
    \begin{align*}
        \permImp[\varAlen]^{\permSortToPend\varA}
        &=
        \begin{cases}
    	{d + 1 - i}, &\quad \text{or}~ 
    	\begin{aligned}
    		&\varAof{\permSortToPendOf{i}} > \varAof{\permSortToPendOf{d + 1 - i}},~ i \le \varAlen / 2 \\
    		&\varAof{\permSortToPendOf{i}} < \varAof{\permSortToPendOf{d + 1 - i}},~ \varAlen / 2 < i \le \varAlen
    	\end{aligned} \\
    	{i}, &\quad \text{otherwise}
    	\end{cases} \\
    	&=
    	\begin{cases}
    	{d + 1 - i}, &\quad \text{or}~ 
    	\begin{aligned}
    		&\varAof{2i-1} > \varAof{2i},~ i \le \varAlen / 2 \\
    		&\varAof{2(\varAlen + 1 - i)} < \varAof{2(d+1-i) - 1},~ \varAlen / 2 < i \le \varAlen
    	\end{aligned} \\
    	{i}, &\quad \text{otherwise}
    	\end{cases}
    \end{align*}
    To prove the lemma, we will show that $\permSortToPend \permTilde[\varAlen]  =  \permImp[\varAlen]\permSortToPend$, i.e., $\permSortToPend \permTilde[\varAlen]^x  =  \permImp[\varAlen]^{\permSortToPend\varA}\permSortToPend$. 
    
    Indeed,
    \begin{align*}
        \permImp[\varAlen]\permSortToPendOf{i}
        &=
        \begin{cases}
            2\permImpOf[\varAlen]{i}-1 &, \permImpOf[\varAlen]{i} \leq \frac{\varAlen+1}{2} \\
            2(\varAlen+1-\permImpOf[\varAlen]{i}) &, \permImpOf[\varAlen]{i} > \frac{\varAlen+1}{2}
        \end{cases} \\
        &=
        \begin{cases}
            2(\varAlen + 1 - i) - 1 &, \varAof{2(\varAlen + 1 - i)} < \varAof{2(d+1-i) - 1},~ \varAlen / 2 < i \le \varAlen \\
            2i &, \varAof{2i-1} > \varAof{2i},~ i \le \varAlen / 2 \\
            \permSortToPendOf{i} &, \text{otherwise}
        \end{cases} 
    \end{align*}
    and
    \begin{align*}
        \permSortToPend \permTilde[\varAlen] (i)
        &=
        \begin{cases}
                \permSortToPendOf{i} - 1 & , \permSortToPendOf{i} > 1 \land \permSortToPendOf{i} \text{ even } \land \varAof{\permSortToPendOf{i}} < \varAof{\permSortToPendOf{i} - 1} \\
                \permSortToPendOf{i} + 1 &, \permSortToPendOf{i} < \varAlen \land \permSortToPendOf{i} \text{ odd } \land \varAof{\permSortToPendOf{i}} > \varAof{\permSortToPendOf{i} + 1} \\
                \permSortToPendOf{i} &, \text{otherwise}
        \end{cases} \\
        &=
        \begin{cases}
                2(\varAlen +1 - i) - 1 & , 2(\varAlen +1 - i) > 1 \land i > \frac{\varAlen+1}{2} \land \varAof{2(\varAlen +1 - i)} < \varAof{2(\varAlen +1 - i) - 1} \\
                2i - 1 + 1 &, 2i - 1 < \varAlen \land i \leq \frac{\varAlen+1}{2} \land \varAof{2i - 1} > \varAof{2i - 1 + 1} \\
                \permSortToPendOf{i} &, \text{otherwise}
        \end{cases} \\
        &=
        \begin{cases}
            2(\varAlen + 1 - i) - 1 &, \varAof{2(\varAlen + 1 - i)} < \varAof{2(d+1-i) - 1},~ \varAlen / 2 < i \le \varAlen \\
            2i &, \varAof{2i-1} > \varAof{2i},~ i \le \varAlen / 2 \\
            \permSortToPendOf{i} &, \text{otherwise},
        \end{cases}
    \end{align*}
    thus $\permSortToPend \permTilde[\varAlen]  =  \permImp[\varAlen]\permSortToPend$.
\end{proof}

\section{Proof of \cref{prop: extreme points}}
\label{appendix: proof of prop extreme points}

\begin{proof}
    The proof follows by the convexity of $\expectedEscape{\varP}{0}$ on $C \subseteq \left[\frac{1}{2}, 1\right)^{\varAlen}$, as shown in \cref{lemma: convexity} below. Since $\expectedEscape{\varP}{0}$ is convex, there exists a maximizer $\varP^* \in \text{ext}(C)$, and by assumption we also have that $\pendOf{\varP^*} \in \text{ext}(C)$. By \cref{thm:main} we have that 
    $
    \expectedEscape{\pendOf{\varP^*}}{0}
    \ge
    \expectedEscape{{\varP^*}}{0}
    $
    and thus
    $
    \pendOf{\varP^*}
    \in
    \pendOf{(\text{ext}(C))}
    $
    is also a maximizer.
\end{proof}

\begin{lemma}[Escape Time Convexity]
\label{lemma: convexity}
$\expectedEscape{\varP}{0}$ is convex for $\varP \in \left[\frac{1}{2}, 1\right)^{\varAlen}$.
\end{lemma}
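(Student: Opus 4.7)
My plan is to leverage the closed-form expression from \cref{proposition:escapeTimeFormula}:
\begin{equation*}
    \expectedEscape{\varP}{0}
    =
    (d+1)
    +
    2\sum_{m=1}^{d}\sum_{i=1}^{d-m+1}\prod_{j=i}^{i+m-1}\frac{\varPof{j}}{1-\varPof{j}}.
\end{equation*}
Since a nonnegative linear combination of convex functions (plus a constant) is convex, it suffices to show that each product $\prod_{j=i}^{i+m-1}\varPof{j}/(1-\varPof{j})$ is convex in $\varP$ on $[\tfrac12,1)^{d}$. A direct Hessian computation on the product is cumbersome (it couples distinct coordinates), so instead I would argue via log-convexity, which behaves nicely under products and implies convexity.

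Concretely, let $\phi(y) := \log\!\big(y/(1-y)\big) = \log y - \log(1-y)$, so $\phi''(y) = -1/y^{2} + 1/(1-y)^{2}$. Then $\phi''(y)\ge 0$ iff $y\ge 1-y$, i.e.\ iff $y\ge 1/2$, so $\phi$ is convex on $[\tfrac12,1)$; equivalently, $y\mapsto y/(1-y)$ is log-convex there. Taking the logarithm of a product term gives
\begin{equation*}
    \log\prod_{j=i}^{i+m-1}\frac{\varPof{j}}{1-\varPof{j}} = \sum_{j=i}^{i+m-1}\phi(\varPof{j}),
\end{equation*}
a sum of convex univariate functions of distinct coordinates of $\varP$, hence convex as a function of $\varP\in[\tfrac12,1)^{d}$. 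Thus each product is log-convex on $[\tfrac12,1)^{d}$.

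Finally, any log-convex positive function is convex: if $h=e^{\psi}$ with $\psi$ convex, then by convexity of $\psi$ and AM-GM,
\begin{equation*}
    h(\lambda x+(1-\lambda)y)\le h(x)^{\lambda}h(y)^{1-\lambda}\le \lambda h(x)+(1-\lambda)h(y).
\end{equation*}
Applying this to each product in the closed form and summing yields convexity of $\expectedEscape{\varP}{0}$ on $[\tfrac12,1)^{d}$. The only real content is the computation $\phi''\ge 0 \Leftrightarrow y\ge 1/2$, which is exactly why the lemma restricts to $[\tfrac12,1)^{d}$; the rest is a standard application of the fact that log-convexity is preserved under products and implies convexity.
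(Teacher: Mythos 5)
Your proof is correct and follows essentially the same route as the paper: reduce to the closed-form sum over products from \cref{proposition:escapeTimeFormula}, verify that $y \mapsto y/(1-y)$ is log-convex on $\left[\tfrac12, 1\right)$ by checking the sign of the second derivative of its logarithm, and conclude that each product term (and hence the whole sum) is convex. The only cosmetic differences are that the paper first substitutes $a = \varP - \tfrac12$ and invokes a cited theorem (Theorem~3.2 of Crouzeix's survey) for the step ``a product of log-convex factors is convex,'' whereas you prove that implication directly via weighted AM--GM; both arguments are valid.
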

\begin{proof}
Define $f(a) = \expectedEscape{\frac12 + a}{0}$, then by \cref{proposition:escapeTimeFormula} we have that
\begin{equation*}
    f(a)
    =
    (\varAlen+1)
    +
    \sum_{m=1}^{\varAlen} 
    \sum_{i=1}^{\varAlen - m + 1}
    \prod_{j=i}^{i+m-1}
    \frac{\frac{1}{2}+a_j}{\frac{1}{2}-a_j}.
\end{equation*}
Since this is a linear variable exchange, it suffices to show that $f$ is convex over $[0, \frac12)^{\varAlen}$.
Denote
\begin{align*}
    g_{i,m}(x) = \prod_{j=i}^{i+m-1}
    \frac{\frac{1}{2}+x_j}{\frac{1}{2}-x_j}.
\end{align*}
Then
\begin{equation*}
    f(a)
    =
    (\varAlen+1)
    +
    \sum_{m=1}^{\varAlen} 
    \sum_{i=1}^{\varAlen - m + 1}
    g_{i,m}(a).
\end{equation*}
It is thus enough to show that $g_{i,m}$ are convex in $[0, \frac12)^{\varAlen}$. We use Theorem~3.2 of \cite{crouzeix1995survey} which states that $g_{i,m}$ is convex if and only if $\frac{\frac{1}{2}+x}{\frac{1}{2}-x}$ is log-convex for $x \in [0, \frac12)$. Indeed, 
\begin{equation*}
    \frac{\partial^2}{\partial x^2} \brk*{\log \frac{\frac{1}{2}+x}{\frac{1}{2}-x}} 
    = \frac{2x}{\brk*{\frac{1}{2}-x}^2\brk*{\frac{1}{2}+x}^2} 
    \geq 
    0,\;
    \forall x \in \left[0, \frac{1}{2}\right).
\end{equation*}
\end{proof}

\end{document}